\newtheorem{theorem}{Theorem}
\newtheorem{lemma}[theorem]{Lemma}
\newtheorem{corollary}[theorem]{Corollary}
\newtheorem{proposition}[theorem]{Proposition}
\theoremstyle{definition}
\newtheorem{example}{Example}
\theoremstyle{remark}
\numberwithin{equation}{section}
\newcommand{\B}{\mathcal{B}}
\newcommand{\D}{\mathbb{D}}
\newcommand{\DD}{\widehat{\mathcal{D}}}
\newcommand{\Dd}{\widecheck{\mathcal{D}}}
\newcommand{\DDD}{\mathcal{D}}
\newcommand{\N}{\mathbb{N}}
\newcommand{\R}{\mathbb{R}}
\newcommand{\C}{\mathbb{C}}
\renewcommand{\phi}{\varphi}
\newcommand{\T}{\mathbb{T}}
\newcommand{\whw}{\widehat{\omega}}
\def\VMOA{\mathord{\rm VMOA}}
\def\BMOA{\mathord{\rm BMOA}}
\def\at{(\varphi_t)_{t\geq0}}
\def\Ct{(C_t)_{t\geq0}}
\def\A{A^p_{\omega}}
\def\a{A_{\omega,m}^p}              \def\g{\gamma}
     \def\om{\omega}      
            \def\f{\frac}
                  \def\z{\zeta}
\def\G{\Gamma}
\renewcommand{\H}{\mathcal{H}}
\begin{document}
\title[Spectra of infinitesimal generators]{Spectra of infinitesimal generators of
composition semigroups on weighted Bergman
spaces induced by doubling weights}

\today

\thanks{The first author was supported by NNSF of China (No.12226312, 12226306) and Lingnan Normal University (No. LT2410). The second author is supported in part by Academy of Finland 356029. The third author is supported by NNSF of China (No. 12371131).}

\begin{abstract}
Suppose $(C_t)_{t\geq0}$ is the composition semigroup induced by a one-parameter semigroup $(\varphi_t)_{t\geq0}$ of analytic self-maps of the unit disk. The main purpose of the paper is to investigate the spectrum of the infinitesimal generator of $(C_t)_{t\geq0}$ acting on the weighted Bergman space induced by doubling weights, provided $(\varphi_t)_{t\geq0}$ is elliptic. The method applied is a certain spectral mapping theorem and a characterization of the spectra of certain composition operators. Eventual norm-continuity of $(C_t)_{t\geq0}$ also plays an important role, which can be depicted in terms of studying the difference of two distinct composition operators. As a byproduct, we also characterize a certain compact integral operator that is closely related to the resolvent of the infinitesimal generator of $(C_t)_{t\geq0}$. 
\end{abstract}

\keywords{Spectrum, Composition semigroup, Weighted Bergman space, Doubling weights, Difference of composition operators, Spectral mapping theorem, Eventually norm-continuous.}

\subjclass[2010]{30H20, 47B33, 47D06, 47A10}

\author[Ruishen Qian]{Ruishen Qian}
\address{School of Mathematics and Statistics, Lingnan Normal University, Zhanjiang 524048, Guangdong, China
}
\email{qianruishen@sina.cn}

\author[Fanglei Wu]{Fanglei Wu}
\address{Department of Physics and Mathematics, University of Eastern Finland, P.O.Box 111, 80101 Joensuu, Finland}
\email{fanglei.wu@uef.fi}
\email{fangleiwu1992@gmail.com}

\author[Hasi Wulan]{Hasi Wulan}
\address{Department of Mathematics, Shantou University, Shantou, 515063, People's Republic of China}
\email{wulan@stu.edu.cn}

\maketitle

\section{Introduction and main results}

A family $(\varphi_{t})_{t\geq0}$ of analytic self-maps of the unit disk $\D=\{z\in\C:|z|<1\}$ in the complex plane $\mathbb{C}$ is said to be a semigroup if the following conditions hold.
\begin{itemize}
\item[(i)] $\varphi_{0}$ is the identity map of $\mathbb{D}$;
\item[(ii)] $\varphi_{t}\circ\varphi_{s}=\varphi_{t+s}$, for $t,s\geq0$;
\item[(iii)] for each $z\in\mathbb{D}$, $\varphi_{t}(z)\rightarrow z$, as $t\rightarrow0^+$.
\end{itemize}
A semigroup $\at$ is said to be trivial if each $\varphi_t$ is the identity of $\D$. For any non-trivial semigroup $\at$, the infinitesimal generator of $\at$ is defined as the function
$$
G(z)=\lim_{t\to0^+}\f{\varphi_t(z)-z}{t},\quad z\in\D.
$$ 
There exist a point $b\in\overline{\D}$ and an analytic function $P:\D\mapsto\C$ with $\text{Re}\,P\geq 0$ such that
\begin{equation}\label{G}
G(z)=(\overline{b}z-1)(z-b)P(z).
\end{equation}
See \cite{BP} for the details. The representation \eqref{G} is unique and the point $b$ is said to be the Denjoy-Wolff point of $\at$. In general, we say that $\at$ is elliptic if its Denjoy-Wolff point $b$ belongs to $\D$. In this case, there exists a univalent function $h$, called Koenigs function, on $\D$ with $h(b)=0$ and $h'(b)=1$ such that
\begin{equation}\label{koenigs}
h(\varphi_t(z))=e^{G'(b)t}h(z),\, z\in\D,\, t\geq0.
\end{equation}
If $b\in\T$, the boundary of $\D$, then the non-tangential derivative $\varphi_t'(b)$ always exists and $\varphi_t'(b)\in(0,1]$ for every $t>0$. Moreover, $\at$ is said to be hyperbolic if $\varphi_t'(b)<1$ and parabolic if $\varphi_t'(b)=1$ for every $t>0$. For $b\in\T$, there exists a univalent function $h$, called Koenigs function, on $\D$ with $h(b)=0$ such that
$$
h(\varphi_t(z))=h(z)+t,\, z\in\D, \,t\geq0.
$$

Each semigroup $(\varphi_{t})_{t\geq0}$ gives rise to a semigroup $(C_{t})_{t\geq0}$ consisting of composition operators on $\H(\D)$, the set of analytic functions on $\D$, where
$$
C_{t}(f)=C_{\varphi_t}f=f\circ\varphi_{t}, \quad f\in \H(\mathbb{D}).
$$
The composition semigroup $(C_t)_{t\geq0}$ is said to be strongly continuous on a Banach space $X$ if $C_t$ is bounded on $X$ and
$$
 \lim_{t\rightarrow0^{+}}\|C_t x-x\|_{X}=0 \quad \mbox{for}~~\mbox{all}~~x\in X.
$$

The problem of studying the strong continuity of composition semigroup can be traced back to the initial work of E. Berkson and H. Porta \cite{BP} in which it was shown that every composition semigroup is strongly continuous on Hardy spaces. The same results were obtained on weighted Bergman spaces with standard weights and Dirichlet space, see \cite{AG2,AG3} and references therein. The aforementioned literature also proved that the (infinitesimal) generator $\Gamma$ of the composition semigroup $\Ct$ on these spaces has the form:
\begin{equation}\label{generator}
\G(f)=Gf',
\end{equation}
where $G$ is the infinitesimal generator of $\at$. 

It is well-known that the generator of a strongly continuous semigroup is closed. The classical Hille--Yosida theorem implies that the spectrum of the (infinitesimal) generator
of a strongly continuous semigroup always lies in a proper left half-plane, but it seems that there is no standard method to find out the complete distribution of the spectrum for a concrete generator. In general, the problem of describing the spectra of generators of strongly continuous composition semigroups is a tough question and remains unsettled in many cases.

The existing literature, see \cite{AG1,AG2} for example, offers several descriptions of the point spectrum of composition semigroups acting on Hardy spaces and Bergman spaces with the standard weights. To be concrete, given a semigroup $\at$ with the infinitesimal generator $G$, the Denjoy-Wolff point $b$ and Koenigs function $h$, the point spectrum of the generator of the corresponding composition semigroup $\Ct$, denoted by $P\sigma(\Gamma)$, can be characterized as follows:
\begin{itemize}
\item[(i)] If $b\in\D$, then $P\sigma(\G)=\{kG'(b):h^k\in X, k=0,1,2,3\ldots\}$;
\item[(ii)] If $b\in\T$, then $P\sigma(\G)=\{\lambda G(0):e^{\lambda h}\in X\}$,
\end{itemize}
where $X$ can be chosen as the Hardy space and the standard weighted Bergman space. The method used to find out the point spectrum of $\G$ depends on the argument principle. Using the harmonic measure, D. Betsakos \cite{Be1, Be2} provided an intuitive characterization of the point spectrum of the generator $\Gamma$ on the Hardy space and standard weighted Bergman space in the case of elliptic or hyperbolic semigroup $\at$.

We are mainly interested in the question of characterizing the spectrum of the generator of composition semigroups acting on weighted Bergman spaces induced by a class of weights admitting a certain doubling property. A non-negative function $\om\in L^1(\D)$ such that $\om(z)=\om(|z|)$ for all $z\in\D$ is called a radial weight. For $0<p<\infty$ and such an $\omega$, the Lebesgue space $L^p_\om$ consists of complex-valued measurable functions $f$ on $\D$ such that
$$
    \|f\|_{L^p_\omega}^p=\int_\D|f(z)|^p\omega(z)\,dA(z)<\infty,
    $$
where $dA(z)=\frac{dx\,dy}{\pi}$ is the normalized area measure on $\D$. The corresponding weighted Bergman space is $A^p_\om=L^p_\omega\cap\H(\D)$. As usual, we write $A^p_\alpha$ for the classical weighted Bergman spaces induced by the
standard weight $\om(z)=(\alpha+1)(1-|z|^2)^\alpha$ with $-1<\alpha<\infty$. Throughout this paper we assume $\widehat{\om}(z)=\int_{|z|}^1\om(s)\,ds>0$ for all $z\in\D$, for otherwise $A^p_\om=\H(\D)$. A radial weight $\om$ belongs to the class~$\DD$ if there exists a constant $C=C(\om)\ge1$ such that the tail integral $\widehat{\om}$ satisfies the doubling condition $\widehat{\om}(r)\le C\widehat{\om}(\frac{1+r}{2})$ for all $0\le r<1$. Moreover, if there exist $K=K(\om)>1$ and $C=C(\om)>1$ such that $\widehat{\om}(r)\ge C\widehat{\om}\left(1-\frac{1-r}{K}\right)$ for all $0\le r<1$, then we write $\om\in\Dd$. In other words, $\om\in\Dd$ if there exist $K=K(\om)>1$ and $C'=C'(\om)>0$ such that
	\begin{equation}\label{Dcheck}
	   \widehat{\om}(r)\le C'\int_r^{1-\frac{1-r}{K}}\om(t)\,dt,\quad 0\le r<1.
	\end{equation}
The intersection $\DD\cap\Dd$ is denoted by $\DDD$. It is well-known that for a radial weight $\om$ the norm convergence in $A^p_\om$ implies the uniform convergence in compact subsets of $\D$. In particular, the point evaluation functionals are bounded on $A^p_\om$. If further $\om\in\DD$, then a classical Hardy-Littlewood inequality yields
\begin{equation}\label{eqx1}
   |f(z)|\lesssim \frac{\|f\|_{\A}}{((1-|z|)\whw(z))^{1/p}},\quad z\in\D, ~~f\in\A. 
\end{equation}
See, for example \cite[p.35]{PeRa2014}.
Therefore, for a fixed $z\in\D$, the point evaluation functional $\delta_z$ satisfies $\|\delta_z\|\lesssim \frac{1}{((1-|z|)\whw(z))^{1/p}}$.
In fact, the other way around in the above estimate is correct as well. We just note that there exists a sufficiently large $\gamma=\gamma(\om)>0$ such that for any
$a\in\DD$ the function $f_{a,p,\gamma}(z)=\left(\frac{1-|a|^2}{1-\overline{a}z}\right)^{(\gamma+1)/p}$ belongs to $\A$ with $\|f_{a,p,\gamma}\|^p_{\A}\asymp \whw(a)(1-|a|).$ Then the definition of $\delta_z$ simply gives the other direction.
Therefore, we actually have
\begin{equation}\label{eq:functional}
 \|\delta_z\|\asymp \frac{1}{((1-|z|)\whw(z))^{1/p}},\quad z\in\D.
\end{equation}
Apart from this, we refer to \cite{pel2016,PeRa2014} and references therein for basic properties of these weighted Bergman spaces.

A possible and promising method to depict the spectrum of a strongly continuous composition semigroup is the so-called spectral mapping theorem which states the relationship between the spectrum of every $C_t$ and the spectrum of $\G$. Recall that a composition semigroup $(C_t)_{t\geq0}$ and its generator $\Gamma$ satisfy the spectral mapping theorem {(SMT)} if the following identity is valid:
\begin{equation}\label{weakspectrum}
\sigma(C_t)\backslash\{0\}=\overline{e^{t\sigma(\G)}}:=\overline{\{e^{t\lambda}:\lambda\in\sigma(\G)\}}
\quad \mbox{for}~~ t\geq0,
\end{equation}
where $\sigma(\cdot)$ is the spectrum of a certain linear operator. The identity \eqref{weakspectrum} naturally holds if $\Ct$ is acting on a Hilbert space, and hence it is valid for $A^2_\om$. However, in general case, \eqref{weakspectrum} holds only if $\sigma$ is replaced by the point spectrum $P\sigma$ or the residual spectrum $R\sigma$. In fact, it is only the approximate point spectrum $A\sigma$ that is responsible for the failure of (SMT). In this case, we only have the inclusion relation in the right direction of \eqref{weakspectrum}. Nevertheless, (SMT) can be guaranteed if $\Ct$ happens to be eventually norm continuous. See \cite{EN1} for more information about (SMT).

In this paper, we will investigate the spectrum of the generator of the eventually norm continuous composition semigroup $\Ct$ induced by an elliptic semigroup $\at$ on the weighted Bergman space $A^p_\om$ with $\om\in\DD$. We say that a strongly continuous composition semigroup $\Ct$ is eventually norm
 continuous on a Banach space $X$ if there exists a $t_0>0$ such that the composition $C_t$ is norm continuous for all $t>t_0$, i.e.
$$\lim_{s\to t}\|C_s-C_t\|_X=0.$$ From now on, denote by $\|T\|_X$, $r_X(T)$ and $\sigma_X(T)$ the operator norm, the spectral radius, and the spectrum of $T$ on $X$. Denote by $\|T\|_{e,X}$ and $r_{e,X}(T)$ the essential norm and essential spectral radius of the linear operator $T$ on the Banach space $X$. Our first main result can be stated as follows.

\begin{theorem}\label{theorem1}
Let $1\leq p<\infty$ and $\omega\in\widehat{\mathcal{D}}$. Suppose $\at$ is a semigroup of analytic self-maps of $\D$ with Denjoy-Wolff point $b\in\D$, infinitesimal generator $G$, and associated Koenigs function $h$. Denote by $\Gamma$ the infinitesimal generator of the corresponding composition semigroup $(C_t)_{t\geq0}$ on $A^p_\om$. Then the following statements hold:
\begin{itemize}
\item[(i)] If $\at$ consists of automorphisms of $\D$, then
$$
\sigma(\Gamma)=P\sigma(\Gamma)=\{kG'(b): h^k\in A^p_\om, k=0,1,2,3\ldots\};
$$
\item[(ii)] If no element of $\at$ is an automorphism of $\D$ and $\Ct$ is eventually norm continuous, then for any $t>0$, the spectrum $\sigma(\G)$ satisfies the following equality:
$$
\{e^{t\lambda}: \lambda\in\sigma(\G)\}\cup\{0\}=\{\lambda\in\C:|\lambda|\leq r_{e, A^p_\om}(C_{\varphi_{t}})\}\cup\{e^{kG'(b)t}:k=0,1,2,3\ldots\}.
$$

\end{itemize}
\end{theorem}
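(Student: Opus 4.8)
\emph{Case (i): automorphic semigroups.} The two cases require different tools; in case~(i) the point is that an elliptic automorphic semigroup is, after one M\"obius conjugation, a rotation semigroup, hence \emph{periodic}. Conjugating the common interior fixed point $b$ of $\at$ to the origin by the involution $\psi_b(z)=(b-z)/(1-\overline{b}z)$ turns each $\varphi_t$ into a rotation $z\mapsto e^{i\alpha t}z$ (the angle is additive by the semigroup law), and \eqref{koenigs} gives $\varphi_t'(b)=e^{G'(b)t}=e^{i\alpha t}$, so $G'(b)=i\alpha\in i\R$. If $\alpha=0$ the semigroup is trivial and $\Gamma=0$; otherwise $\varphi_{2\pi/|\alpha|}=\mathrm{id}_{\D}$, so $\Ct$ is a periodic strongly continuous semigroup of period $\tau=2\pi/|\alpha|$ on $A^p_\om$. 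For such a semigroup $R(\lambda,\Gamma)=(1-e^{-\lambda\tau})^{-1}\int_0^\tau e^{-\lambda s}C_s\,ds$ for $\Real\lambda>0$, which extends meromorphically to $\C$ with only (simple) poles, located on $\frac{2\pi i}{\tau}\mathbb{Z}$; hence every point of $\sigma(\Gamma)$ is a pole of the resolvent, i.e.\ an eigenvalue, so $\sigma(\Gamma)=P\sigma(\Gamma)$. It then remains to compute the point spectrum: differentiating \eqref{koenigs} at $t=0$ gives $Gh'=G'(b)h$, so $\Gamma f=Gf'=\lambda f$ with $0\neq f\in\H(\D)$ forces $f=c\,h^{\lambda/G'(b)}$, which is analytic on $\D$ exactly when $\lambda/G'(b)=k\in\{0,1,2,\dots\}$ (recall $h$ is univalent with its only zero a simple zero at $b$), with $f=h^k$. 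Since here $h=\psi_b/\psi_b'(b)$ is a bounded M\"obius map, $h^k\in A^p_\om$ automatically for every $k$, and we conclude $\sigma(\Gamma)=P\sigma(\Gamma)=\{kG'(b):h^k\in A^p_\om,\ k=0,1,2,\dots\}$.

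\emph{Case (ii): no automorphisms, $\Ct$ eventually norm-continuous.} Here the engine is the spectral mapping theorem. Since $\Ct$ is a strongly continuous and, by hypothesis, eventually norm-continuous semigroup on $A^p_\om$, the spectral mapping theorem for eventually norm-continuous semigroups (see \cite{EN1}) yields, for each $t>0$,
\[
\{e^{t\lambda}:\lambda\in\sigma(\Gamma)\}=\sigma_{A^p_\om}(C_{\varphi_t})\setminus\{0\}.
\]
As no $\varphi_t$ is an automorphism, $C_{\varphi_t}$ is not invertible, so $0\in\sigma_{A^p_\om}(C_{\varphi_t})$; adjoining $\{0\}$ to both sides then reduces Theorem~\ref{theorem1}(ii) to the purely operator-theoretic identity
\[
\sigma_{A^p_\om}(C_{\varphi_t})=\{\lambda\in\C:|\lambda|\le r_{e,A^p_\om}(C_{\varphi_t})\}\cup\{e^{kG'(b)t}:k=0,1,2,\dots\},
\]
where $\varphi_t$ is univalent, elliptic and non-automorphic, with interior fixed point $b$ and $0<|\varphi_t'(b)|=|e^{G'(b)t}|<1$. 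Establishing this identity is where the real work lies; it is the ``characterization of the spectra of certain composition operators'' advertised in the introduction.

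\emph{Proof of the operator identity.} I would prove it by adapting the classical (Cowen-type) analysis of composition operators with an interior fixed point to the weighted setting $\om\in\DD$, in four steps. (1) On the complement of the closed disk of radius $r_{e,A^p_\om}(C_{\varphi_t})$ the operator $C_{\varphi_t}$ is Fredholm, so there $\sigma_{A^p_\om}(C_{\varphi_t})$ consists of isolated eigenvalues of finite multiplicity. (2) Transplanting via $f\mapsto f\circ h^{-1}$ to the linear model $w\mapsto\varphi_t'(b)w$ on the spiral-like domain $h(\D)\ni0$ (which the model maps into itself), and expanding a would-be eigenfunction in a power series at $0$, one finds that the only possible eigenvalues of $C_{\varphi_t}$ are the $\varphi_t'(b)^k=e^{kG'(b)t}$ ($k=0,1,2,\dots$), with eigenfunction $h^k$; thus $e^{kG'(b)t}$ is an eigenvalue precisely when $h^k\in A^p_\om$. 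Together, (1)--(2) give $\sigma_{A^p_\om}(C_{\varphi_t})\subseteq\{|\lambda|\le r_{e,A^p_\om}(C_{\varphi_t})\}\cup\{e^{kG'(b)t}:k\ge0\}$. (3) Conversely, the essential spectrum $\sigma_{e,A^p_\om}(C_{\varphi_t})$ is exactly the closed disk of radius $r_{e,A^p_\om}(C_{\varphi_t})$: for the inclusion ``$\supseteq$'' I would construct, for each $\lambda$ with $|\lambda|\le r_{e,A^p_\om}(C_{\varphi_t})$, a singular (Weyl) sequence for $C_{\varphi_t}-\lambda$ from normalized test functions of the type $f_{a,p,\gamma}$ in \eqref{eq:functional}, pushed toward the (repulsive) boundary fixed point of $\varphi_t$, near which $C_{\varphi_t}$ behaves like a weighted shift of spectral radius $r_{e,A^p_\om}(C_{\varphi_t})$. (4) Finally, the points $e^{kG'(b)t}$ for which $h^k\notin A^p_\om$ still belong to the spectrum because they lie in the disk from (3): indeed $r_{e,A^p_\om}(C_{\varphi_t})=|\varphi_t'(b)|^{s_0}$ with $s_0=\sup\{s>0:h^s\in A^p_\om\}$ (the critical exponent controlling \eqref{eqx1} along $h$), so $h^k\notin A^p_\om$ forces $k\ge s_0$, whence $|e^{kG'(b)t}|=|\varphi_t'(b)|^k\le|\varphi_t'(b)|^{s_0}=r_{e,A^p_\om}(C_{\varphi_t})$. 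Combining (1)--(4) gives the identity, and with it Theorem~\ref{theorem1}(ii).

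\emph{Main obstacle.} The hard part is steps (3) and (4): showing that the essential spectrum of $C_{\varphi_t}$ on $A^p_\om$ is the \emph{whole} closed disk of radius $r_{e,A^p_\om}(C_{\varphi_t})$, and identifying that radius as the critical exponent $|\varphi_t'(b)|^{s_0}$ so that it absorbs exactly the ``missing'' eigenvalues $e^{kG'(b)t}$ with $h^k\notin A^p_\om$. Both demand quantitative control of $\|C_{\varphi_t}^{\,n}\|_{e,A^p_\om}$ and of the condition $h^s\in A^p_\om$ in terms of the doubling weight --- i.e.\ of the interplay between the growth estimates \eqref{eqx1}--\eqref{eq:functional} and the geometry of the Koenigs domain $h(\D)$ near the repulsive boundary fixed point of $\varphi_t$. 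It is also here that the companion analysis of the differences $C_{\varphi_s}-C_{\varphi_t}$, which underlies the eventual norm-continuity hypothesis, and the compact integral operator attached to $(\lambda-\Gamma)^{-1}$ via the integrating factor $h^{-\lambda/G'(b)}$, enter the picture.
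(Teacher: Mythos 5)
Your case (i) is correct, though it takes a different route from the paper: you exploit periodicity of the conjugated rotation semigroup and the meromorphic continuation of $R(\lambda,\Gamma)=(1-e^{-\lambda\tau})^{-1}\int_0^\tau e^{-\lambda s}C_s\,ds$ to conclude $\sigma(\Gamma)=P\sigma(\Gamma)$, whereas the paper shows instead that the resolvent is \emph{compact} (the explicit operator $g\mapsto\frac{1}{iaz}\int_0^z g(\zeta)\,d\zeta$) and invokes \cite[Corollary 1.15, p.162]{EN1}; both arguments are legitimate, and the subsequent identification of the eigenvalues ($Gh'=G'(b)h$, eigenfunctions $h^k$) coincides with the paper's argument-principle computation. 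Your reduction of case (ii) via the spectral mapping theorem for eventually norm-continuous semigroups, together with $0\in\sigma_{A^p_\om}(C_{\varphi_t})$ and $\varphi_t'(b)=e^{G'(b)t}$, is also exactly the paper's reduction: everything hinges on the operator identity $\sigma_{A^p_\om}(C_{\varphi_t})=\{|\lambda|\le r_{e,A^p_\om}(C_{\varphi_t})\}\cup\{(\varphi_t'(b))^k\}_{k\ge0}$, which is the paper's Theorem~\ref{spectrum}.

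It is precisely there that your proposal has genuine gaps. Your step (3) --- that the whole closed disk of radius $r_{e,A^p_\om}(C_{\varphi_t})$ lies in the spectrum --- is only a heuristic (``Weyl sequences pushed toward the repulsive boundary fixed point''); the actual proof in the paper is the iteration-sequence construction of Cowen--MacCluer/MacCluer--Saxe adapted to $\om\in\DD$: one restricts to the invariant subspace $A^p_{\om,m}$, builds the functional $L_\lambda(f)=\sum\lambda^{-k}f(z_k)$ along an iteration sequence, interpolates with the $H^\infty$ lemma (Lemma~\ref{le:iteration}), and chooses the starting point $w$ using the essential-norm formula for univalent symbols from \cite{PR2016} together with the $p$-independence of $r_{e,A^p_\om}$ (Lemma~\ref{essentialradius}), to show $(C_m-\lambda I)^*$ is not bounded below for $0<|\lambda|<r_{e,A^p_\om}(C_{\varphi_t})$; none of this is supplied or replaced by your sketch, and your stronger claim that the \emph{essential} spectrum equals the closed disk is neither proved nor needed. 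Your step (4) rests on the unproven formula $r_{e,A^p_\om}(C_{\varphi_t})=|\varphi_t'(b)|^{s_0}$ with $s_0=\sup\{s:h^s\in A^p_\om\}$; this is a Bourdon--Shapiro-type statement (\cite{BS}) known on $H^2$ but not established for doubling weights, and the theorem does not require it: the paper instead uses Kamowitz's argument (\cite{K}) that $z^n$ is not in the range of $C_{\varphi_t}-(\varphi_t'(b))^nI$, which places \emph{every} $(\varphi_t'(b))^n$ in the spectrum irrespective of whether $h^n\in A^p_\om$, and then Fredholmness of index $0$ outside the essential spectral disk shows that outside that disk the only spectral points are the eigenvalues $(\varphi_t'(b))^n$ (Koenigs). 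So the conclusion you want in step (4) is a \emph{consequence} of the theorem, not an available ingredient for proving it; as written, steps (3)--(4) leave the central operator-theoretic identity unestablished.
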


We make several observations on the above theorem. First of all, the most basic problem is to show that $\Ct$ is strongly continuous on $A^p_\om$ provided $\om\in\DD$. Otherwise, we can not proceed to consider the further problem of investigating the spectrum of the generator $\Gamma$ of $\Ct$. However, we will show that $\Ct$ is strongly continuous on $A^p_\om$ and, without any surprise, its generator has the same format as in the case of Hardy and standard Bergman spaces. Second, without any surprise, the point spectrum $P\sigma(\Gamma)$ obtained in (i) is actually the point spectrum of $\Gamma$ for any $\at$ with $b\in\D$, the method used to depict the point spectrum of the $\G$ is the argument principle, which also allows us to get $P\sigma(\G)$ when the Denjoy-Wolff point $b$ is on the boundary. Indeed, if $b\in\T$, then we can get
$P\sigma(\G)=\{\lambda G(0): e^{\lambda h(z)}\in A^p_{\omega}\}.$ Further, the reason why Theorem \ref{theorem1} is divided into two cases is that any element of the semigroup $\at$ is simultaneously an automorphism or not, according to the well-known result in \cite{A}. Finally, the interested readers may also realize that the second statement of Theorem \ref{theorem1} requires the precondition that $\Ct$ is eventually continuous on $A^p_\om$, but the assumption can be dropped when $p=2$ since the (SMT) naturally holds.

{
It is worth underlining that the point spectrum of $\Gamma$ can be described more transparently. Observing that $h$ is a certain spiral-like function and the key is to determine when $h^k$ belongs to $A^p_\om$, we first need to characterize spiral-like functions in $A^p_\om$, provided $\om\in\DD$. We say that a univalent function $h:\D\rightarrow\Omega:=h(\D)$ is said to be $\mu$-spiral-like if $\Omega$ is a $\mu$-spiral-like domain, which means that there is a number $\mu\in\C$ with $\text{Re}\mu>0$ such that for each $z\in\Omega$ and $t\geq0$ the point $e^{-t\mu}z$ also belongs to $\Omega$. See more information about spiral-like functions in \cite{Sd}. Given a $\mu$ with $\text{Re}\mu>0$ and $\theta_0, \eta\in[0,2\pi]$, the $\mu$-spiral sector with opening $\eta$ and the center angle $\theta_0$ is the set
$$
S_\mu(\theta_0,\eta)=\left\{e^{i\theta}e^{-t\mu}: ~t\in\R, ~|\theta-\theta_0|<\frac{\eta}{2}\right\}.
$$
If a $\mu$-spiral-like domain $\Omega$ contains a $\mu$-spiral sector, then it contains a maximal $\mu$-spiral sector of the form $S_\mu(\theta_0,\eta(\Omega))$, in the sense that there is no $\theta_1\in[0,2\pi]$ and no $\eta>\eta(\Omega)$ such that $S_\mu(\theta_1,\eta)\subset\Omega$. The number $\eta(\Omega)$ is called the maximal angular opening of $\Omega$. Now, we may depict spiral-like functions in $A^p_\om$.

\begin{proposition}\label{proo}
   Let $0< p<\infty$, $\omega\in\widehat{\mathcal{D}}$, and let $\mu\in\C$ with $\text{Re}\mu>0$. Suppose $f$ is a $\mu$-spiral-like function and let $\eta$ be the maximal angular opening of $f(\D)$. Then the following statements hold:
   \begin{itemize}
       \item [(i)] If $\eta=0$, then $f\in A^p_\om$;
       \item[(ii)] If $\eta>0$, then $f\in A^p_\om$ if and only if 
       \begin{equation}\label{condition}
       \int_0^1\frac{\whw(r)}{(1-r)^{p\eta\cos^2\alpha/\pi}}\,dr<\infty,
       \end{equation}
       where $\alpha=\arg\mu$.
   \end{itemize}
\end{proposition}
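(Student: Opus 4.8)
The plan is to reduce the membership $f\in\A$ to a growth statement for the integral means $M_p^p(r,f)=\f1{2\pi}\int_0^{2\pi}|f(re^{i\t})|^p\,d\t$. Indeed $\|f\|_\A^p\asymp\int_0^1 M_p^p(r,f)\om(r)r\,dr$, and for $\om\in\DD$ an integration by parts --- legitimate exactly because $\whw$ is doubling --- turns a weighted radial integral of $\om$ into one of $\whw$; this is the step that manufactures the weight $\whw(r)(1-r)^{-p\eta\cos^2\alpha/\pi}$ in \eqref{condition}. So, writing $q=p\eta\cos^2\alpha/\pi$, the proposition comes down to showing $M_p^p(r,f)\asymp(1-r)^{1-q}$ as $r\to1^-$ when $q>1$, $M_p^p(r,f)\asymp\log\f1{1-r}$ when $q=1$, and $M_p^p(r,f)=O(1)$ when $q<1$, and that in case (i) the means remain bounded.

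The model case is the Riemann map onto a $\mu$-spiral sector of opening exactly $\eta$. Unwinding the spiral by the logarithm, $S_\mu(\t_0,\eta)$ is the exponential image of a strip of perpendicular width $\eta\cos\alpha$, and so, up to a rotation and bounded factors, this map is $g_\eta(z)=\bigl(\f{1+z}{1-z}\bigr)^{\eta e^{i\alpha}\cos\alpha/\pi}$, giving $|g_\eta(z)|^p\asymp\bigl|\f{1+z}{1-z}\bigr|^{q}$. Evaluating the essentially equivalent integral $\f1{2\pi}\int_0^{2\pi}|1-re^{i\t}|^{-q}\,d\t$ gives the three regimes above for $g_\eta$, and the integration-by-parts reduction then shows $g_\eta\in\A$ if and only if \eqref{condition} holds; the borderline $q=1$ is precisely why the condition is written with $\whw$ rather than a bare power of $\om$.

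For a general $\mu$-spiral-like $f$ I would pass through the superlevel sets $\{|z|=r:|f(z)|>\lambda\}=f^{-1}\bigl(\{|w|>\lambda\}\cap\Om\bigr)\cap\{|z|=r\}$, $\Om:=f(\D)$, together with the layer-cake formula for $M_p^p(r,f)$. For the lower bound, $\Om\supset S_\mu(\t_0,\eta)$ forces the trace of $f^{-1}\bigl(\{|w|>\lambda\}\cap S_\mu(\t_0,\eta)\bigr)$ on $\{|z|=r\}$ to have length $\gtrsim\lambda^{-\pi/(\eta\cos^2\alpha)}$ for $\lambda$ up to a constant times $(1-r)^{-\eta\cos^2\alpha/\pi}$ --- a conformal-geometry estimate for the preimage of a truncated spiral sector, modelled on $g_\eta$ --- so the layer-cake yields $M_p^p(r,f)\gtrsim(1-r)^{1-q}$ and hence \eqref{condition} whenever $f\in\A$. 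For the upper bound one uses that the maximality of $\eta$ confines $\Om$: outside a large disc, $\Om$ is contained in the union of finitely many $\mu$-spiral sectors of opening $\le\eta$ (only finitely many fit, since overlapping $\eta$-sectors would produce a wider one) together with a region of vanishing spiral-angular width. Over the sectorial part the superlevel traces are controlled as for $g_\eta$; the thin part --- genuine unbounded tentacles as well as the ``sheath'' of excess width around the $\eta$-sectors --- is governed by the estimate of case (i). In case (i) itself ($\eta=0$) every $z$ with $|f(z)|$ large lies in such a thin region, where a Koebe-distortion and extremal-length computation bounds $|f(z)|$ by $(1-|z|)^{-o(1)}$ while the corresponding trace on $\{|z|=r\}$ has length $O(1-r)$; hence $M_p^p(r,f)$ stays bounded and $f\in\A$ for every $\om\in\DD$. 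Letting the disc exhaust $\D$ and integrating against $\om$ then yields $f\in\A$ from \eqref{condition}.

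I expect the main obstacle to be exactly this conformal-geometric input: extracting from the mere maximality of $\eta$ a quantitative exterior description of $\Om$, and proving that the thin regions are so narrow that they contribute only a bounded amount to every $M_p^p(r,f)$ --- with the sharp exponent $\eta\cos^2\alpha/\pi$, not a multiple of it, and uniformly enough to survive integration against an arbitrary doubling weight. The natural way in is to work with $F=\log f$, which maps $\D$ univalently onto $\widetilde{\Om}=\log\Om$; after rotating by $e^{-i\alpha}$ the forward spiral-invariance of $\Om$ becomes invariance of $\widetilde{\Om}$ under leftward horizontal translation, so $\widetilde{\Om}$ is the region lying to the left of a graph over an interval of imaginary parts, and one analyses this graph region by Koebe's distortion theorem and extremal length (the width of the horizontal strips it contains being $\eta\cos\alpha$). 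Squeezing the sharp constant out of this analysis is where the real work lies.
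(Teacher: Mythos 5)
Your plan breaks at the upper bound. The opening reduction --- ``the proposition comes down to showing $M_p^p(r,f)\asymp(1-r)^{1-q}$'' --- asks for a sharp two-sided growth estimate that a general $\mu$-spiral-like function with maximal opening $\eta$ does not satisfy: the opening controls growth only up to arbitrarily small extra powers of $\frac{1}{1-r}$. Already in the starlike case $\mu=1$ take $\Omega=\D\cup\{re^{i\theta}: r\ge 1,\ |\theta|<\eta/2+1/(1+\log r)\}$; this is starlike, contains $S(0,\eta)$ and no wider sector, yet the Ahlfors--Warschawski distortion estimates for the associated strip domain (width $\eta+2/(1+u)$ in logarithmic coordinates) show that the Riemann map $f$ satisfies $M_\infty(r,f)\asymp(1-r)^{-\eta/\pi}\bigl(\log\frac{e}{1-r}\bigr)^{c}$ for some $c=c(\eta)>0$, and since $M_\infty(r,f)\lesssim(1-r)^{-1/p}M_p\bigl(\frac{1+r}{2},f\bigr)$ the same unbounded logarithmic factor persists in $M_p^p(r,f)$. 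So the ``sheath'' of excess width around the maximal sectors does \emph{not} contribute a bounded amount to the means; the quantitative exterior description you hope to extract from maximality of $\eta$ (finitely many $\eta$-sectors plus negligible thin parts, with the sharp exponent and no loss) is exactly what fails, and no refinement of the extremal-length analysis will remove the $\varepsilon$-loss. The same issue touches your case (i): when $\eta=0$ the means need not stay bounded (any sub-polynomial growth can occur); $f\in\A$ nevertheless holds there because $\om\in\DD$ forces $\whw(r)\lesssim(1-r)^{\beta}$ for some $\beta>0$, which absorbs any $(1-r)^{-\varepsilon}$.

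This is precisely why the paper's proof is organized differently and is much shorter. Univalence enters through the Prawitz/Hardy--Littlewood relation \eqref{univalent}, which reduces membership in $\A$ to the finiteness of $\int_0^1 M_\infty^p(r,f)\whw(r)\,dr$, and then the two growth lemmas of Betsakos \cite{Be2} are invoked: the upper bound $M_\infty(r,f)\le C_\varepsilon(1-r)^{-(\eta+\varepsilon)\cos^2\alpha/\pi}$, valid for every $\varepsilon>0$ but not for $\varepsilon=0$ (as the example above shows), yields sufficiency after arguing that \eqref{condition} persists with $\eta$ replaced by $\eta+\varepsilon$ for suitable small $\varepsilon$ --- this is the point where your worry about ``surviving integration against an arbitrary doubling weight'' genuinely matters --- while the sharp lower bound, which comes from the maximal sector actually contained in $f(\D)$, yields necessity. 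Your lower-bound sketch (layer cake over the contained $\eta$-sector) is sound in spirit and is the analogue of that necessity step, but the upper-bound half of your plan, as stated, requires proving a false statement, so the argument cannot be completed along the proposed route.
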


The proof relies on a well-known asymptotic equality of univalent functions $f$ in the Hardy spaces $H^p$:
$$
\|f\|^p_{H^p}\asymp\int_0^1 M_\infty^p(r,f)\,dr, 
$$
where $M_\infty(r,f)=\max_{|z|=r}|f(z)|$. See \cite[p.411]{HL} and \cite{Pra1927} for the proof. This immediately gives the estimate for univalent function $f$ in weighted Bergman space $A^p_\nu$ with the radial weight $\nu$:
\begin{equation}\label{univalent}
\|f\|^p_{A^p_\nu}\asymp\int_0^1 M_\infty^p(r,f)\left(\int_r^1\nu(t)t\,dt\right)\,dr.
\end{equation}
Together with two growth theorems established in \cite{Be2}, we may prove the above proposition, which, together with Theorem~\ref{theorem1}, gives the following more precise characterization of the point spectrum $P\sigma(\Gamma)$.

\begin{corollary}\label{theorem+}
    Let $1\leq p<\infty$ and $\omega\in\widehat{\mathcal{D}}$. Suppose $\at$ is a semigroup of analytic self-maps of $\D$ with Denjoy-Wolff point $b\in\D$, infinitesimal generator $G$, and associated Koenigs function $h$. Denote by $\Gamma$ the infinitesimal generator of the corresponding composition semigroup $(C_t)_{t\geq0}$ on $A^p_\om$. Let $\mu=-G'(b)$, $\alpha=\arg\mu$, and let $\eta$ be the maximal angular opening of the $\mu$-spiral-like domain $h(\D)$. Then we have 
    \begin{itemize}
        \item [(i)] If $\eta=0$, then 
        $$
        P\sigma(\Gamma)=\{kG'(b): k=0,1,2,3\ldots\};
        $$
        \item[(ii)] If $\eta>0$, then
         $$
        P\sigma(\Gamma)=\{kG'(b): k=0,1,2,3\ldots,k_0\},
        $$
    where 
    $$
    k_0=\max\left\{k\in\N: k<\int_0^1\frac{\whw(r)}{(1-r)^{p\eta\cos^2\alpha/\pi}}\,dr\right\}.
    $$
    \end{itemize}
\end{corollary}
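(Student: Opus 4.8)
The plan is to deduce the corollary from Theorem~\ref{theorem1}(i) together with Proposition~\ref{proo}. By the remark following Theorem~\ref{theorem1}, for \emph{every} semigroup with Denjoy--Wolff point $b\in\D$ one has $P\sigma(\Gamma)=\{kG'(b):h^k\in A^p_\omega,\ k=0,1,2,\dots\}$, so the whole matter reduces to deciding, for each $k\in\N\cup\{0\}$, whether $h^k\in A^p_\omega$.

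First I would record the trivial identity $\|h^k\|_{A^p_\omega}^p=\int_\D|h(z)|^{pk}\om(z)\,dA(z)=\|h\|_{A^{pk}_\omega}^{pk}$, so that $h^k\in A^p_\omega$ if and only if $h\in A^{pk}_\omega$, and then identify $h$ as a $\mu$-spiral-like function with $\mu=-G'(b)$: by \eqref{koenigs}, $h(\varphi_t(z))=e^{G'(b)t}h(z)=e^{-\mu t}h(z)$, whence $e^{-\mu t}h(\D)=h(\varphi_t(\D))\subseteq h(\D)$ for $t\ge0$. For an elliptic semigroup $\mathrm{Re}\,G'(b)\le0$, with equality precisely when every $\varphi_t$ is an automorphism; in that case $h(\D)$ is a disc about the origin, $h$ is bounded, and $\eta=0$ is taken by convention. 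Otherwise $\mathrm{Re}\,\mu>0$ and $h(\D)$ is genuinely $\mu$-spiral-like with maximal angular opening $\eta=\eta(h(\D))$. Applying Proposition~\ref{proo} to the univalent function $h$, but with the exponent $pk$ in place of $p$, now yields exactly the dichotomy of the corollary: if $\eta=0$ then $h\in A^{pk}_\omega$ for every $k$, which gives (i) after substituting $\mu=-G'(b)$ into Theorem~\ref{theorem1}(i); and if $\eta>0$ then, with $\alpha=\arg\mu$, $h\in A^{pk}_\omega$ if and only if $\int_0^1\whw(r)(1-r)^{-pk\eta\cos^2\alpha/\pi}\,dr<\infty$.

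It then remains to repackage this family of integrability conditions. Two elementary observations suffice. Since $0<1-r<1$ on $(0,1)$, the integrand $\whw(r)(1-r)^{-pk\eta\cos^2\alpha/\pi}$ is pointwise increasing in $k$, so convergence for a given $k$ forces convergence for every smaller one; hence $\{k\ge0:h^k\in A^p_\omega\}$ is an initial segment $\{0,1,\dots,k_0\}$ of $\N\cup\{0\}$, nonempty because $h^0\equiv1\in A^p_\omega$. Moreover $k_0<\infty$: since $\om\in\DD$, iterating $\whw(r)\le C\whw(\tfrac{1+r}{2})$ along $r_n=1-2^{-n}$ together with the monotonicity of $\whw$ produces a lower bound $\whw(r)\gtrsim(1-r)^{s}$ for some $s=s(\om)>0$, while $\eta>0$ and $\mathrm{Re}\,\mu>0$ force $\cos^2\alpha>0$; hence the integral diverges once $pk\eta\cos^2\alpha/\pi\ge s+1$. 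Reading off the largest $k$ for which the last integral converges yields $k_0$, and Theorem~\ref{theorem1}(i) then finishes the proof.

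The main thing to be careful about is not this last bookkeeping but the use of Proposition~\ref{proo}: since $h^k$ is \emph{not} univalent, the univalent asymptotic \eqref{univalent} cannot be applied to it directly, so one must route through the univalent function $h$ with the inflated exponent $pk$ and verify that this leaves the geometric data $\eta$ and $\alpha$ untouched. One must also keep track of the degenerate configurations — the automorphic case, where $\mu$ is purely imaginary and $\eta=0$ is taken by convention, and, within the case $\eta>0$, the possibility that $h\notin A^p_\omega$ already, in which case $k_0=0$ and $P\sigma(\Gamma)=\{0\}$.
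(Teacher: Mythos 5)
Your route is exactly the paper's: its proof of this corollary is a one-line appeal to Theorem~\ref{theorem1} (through the remark that $P\sigma(\Gamma)=\{kG'(b):h^k\in A^p_\om,\ k=0,1,2,\dots\}$ for every elliptic semigroup) combined with Proposition~\ref{proo}, and your way of filling in the details is the natural one: identify $h$ as $\mu$-spiral-like with $\mu=-G'(b)$ via \eqref{koenigs}, reduce $h^k\in A^p_\om$ to $h\in A^{pk}_\om$ so that Proposition~\ref{proo} is applied to the \emph{univalent} function $h$ with exponent $pk$ (your caution that \eqref{univalent} cannot be used on $h^k$ itself is well placed), note the monotonicity in $k$ of the integrability condition, and bound $k_0<\infty$ using $\whw(r)\gtrsim(1-r)^{s}$ for $\om\in\DD$. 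All of that is correct, as is your separate treatment of the automorphic case where $\operatorname{Re}G'(b)=0$.

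There is, however, a genuine mismatch at the last step which you pass over. What your argument proves is that $\{k:h^k\in A^p_\om\}=\{0,1,\dots,k_0\}$ with $k_0=\max\{k:\int_0^1\whw(r)(1-r)^{-pk\eta\cos^2\alpha/\pi}\,dr<\infty\}$, i.e.\ $k$ multiplies the exponent inside the integral. The corollary as printed instead defines $k_0$ by comparing the integer $k$ with the numerical value of one fixed integral, $k<\int_0^1\whw(r)(1-r)^{-p\eta\cos^2\alpha/\pi}\,dr$, and these two quantities are not equal in general: for a standard weight, $\whw(r)\asymp(1-r)^{\b+1}$, your criterion gives $k<(\b+2)\pi/(p\eta\cos^2\alpha)$ (which is consistent with Betsakos' result for $A^p_\b$), while the printed condition gives $k<\bigl(\b+2-p\eta\cos^2\alpha/\pi\bigr)^{-1}$; taking $\b=0$ and $p\eta\cos^2\alpha/\pi=1$ these yield $k_0=1$ versus $k_0=0$. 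So the sentence ``reading off the largest $k$ for which the last integral converges yields $k_0$'' silently replaces the stated definition of $k_0$ by the different quantity your computation actually produces. The displayed formula in the statement appears to be a misprint of the convergence condition with $pk$ in the exponent, and your proof is correct for that corrected formulation; but as written, your final identification with the printed $k_0$ is unjustified (and cannot be justified, since the two definitions disagree already for standard weights), so you should either prove the corollary with $k_0$ defined by $\int_0^1\whw(r)(1-r)^{-pk\eta\cos^2\alpha/\pi}\,dr<\infty$ or explicitly flag and reconcile the discrepancy.
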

}

As aforementioned, the key approach is the spectral mapping theorem. Since each element of the semigroup $\at$ is univalent on $\D$, to use (SMT), we need first characterize the spectrum of the composition $C_\varphi$, provided $\varphi$ is univalent and has a fixed point in $\D$. The approach we apply here is inherited from the argument of Cowen and MacCluer \cite{CM1} in which the case of unweighted Bergman spaces was dealt with. The key method applied there is using so-called iteration sequence, which will also be employed in the proof of Theorem \ref{theorem1}. Another important ingredient of the proof is the useful identity concerning essential spectral radii $r_{e,A^p_\om}(C_\varphi)$ for distinct $1\leq p<\infty$, which can be achieved through a shortcut instead of using the old method that requires many laborious calculations.

Now, another natural question is when the composition semigroup $\Ct$ is eventually continuous on $A^p_\om$ provided $\om\in\DD$. The most straightforward way to characterize the eventual continuity of $\Ct$ is estimating the norm of the operator $C_\varphi-C_\psi$, the difference of $C_\varphi$ and $C_\psi$, when $\varphi$ and $\psi$ are two distinct self-maps on $\D$.
In the paper, as the second main result, we can get the characterization if the weight involved is confined to belong to the class $\DDD$.  To state the result, we define $\phi_a(z)=\frac{a-z}{1-\overline{a}z}$ for all $a,z\in\D$. The pseudohyperbolic distance between two points $a$ and $z$ in $\D$ is $\delta(a,z)=|\varphi_a(z)|$. For $a\in\D$ and $0<r<1$, the pseudohyperbolic disc of center $a$ and of radius $r$ is $\Delta(a,r)=\{z\in \D:\delta(a,z)<r\}$. It is well known that $\Delta(a,r)$ is an Euclidean disk centered at $(1-r^2)a/(1-r^2|a|^2)$ and of radius $(1-|a|^2)r/(1-r^2|a|^2)$.

\begin{theorem}\label{theorem2}
   Let $1\leq p<\infty$, $\omega\in\DDD$. Suppose $\at$ is a semigroup of analytic self-maps of $\D$ and $\Ct$ is the corresponding composition semigroup on $A^p_\om$. Then $\Ct$ is an eventually norm-continuous operator semigroup on $A^p_\om$ if and only if there exist $\gamma_0=\gamma_0(\om)>0$  and $t_0>0$ such that for any $\gamma>\gamma_0$ and $t,s>t_0$
\begin{equation}\label{normcontinuous}
\begin{split}
&\lim_{s\to t}\sup_{a\in\D}\frac{(1-|a|)^{\gamma}}{\whw(a)}\int_{\D}\left(\f{\delta^p(\varphi_t(z),\varphi_s(z))}{|1-\bar{a}\varphi_t(z)|^{\gamma+1}}+\f{\delta^p(\varphi_t(z),\varphi_s(z))}{|1-\bar{a}\varphi_s(z)|^{\gamma+1}}\right)\om(z)\,dA(z)=0.
\end{split}
\end{equation}
\end{theorem}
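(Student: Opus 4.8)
\textbf{Proof proposal for Theorem \ref{theorem2}.}

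The plan is to translate the operator-theoretic statement ``$\Ct$ is eventually norm-continuous'' into the integral condition \eqref{normcontinuous} by estimating $\|C_{\varphi_t}-C_{\varphi_s}\|_{A^p_\om}$ from above and below in terms of a test-function integral, and then recognizing that integral as a Carleson-type quantity governed by the pseudohyperbolic distance $\delta(\varphi_t,\varphi_s)$. First I would fix $\gamma=\gamma(\om)>0$ large enough that the normalized reproducing-type functions $f_{a,p,\gamma}(z)=\bigl((1-|a|^2)/(1-\overline{a}z)\bigr)^{(\gamma+1)/p}$ lie in $A^p_\om$ with $\|f_{a,p,\gamma}\|_{A^p_\om}^p\asymp\whw(a)(1-|a|)$, as recorded in the excerpt just before \eqref{eq:functional}. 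Testing $C_{\varphi_t}-C_{\varphi_s}$ against $f_{a,p,\gamma}$ and using \eqref{eq:functional} together with the elementary inequality $\bigl|(1-\overline a u)^{-\beta}-(1-\overline a v)^{-\beta}\bigr|\lesssim \delta(u,v)\bigl(|1-\overline a u|^{-\beta}+|1-\overline a v|^{-\beta}\bigr)$ (valid for $u,v\in\D$, $\beta>0$, which follows from the Schwarz–Pick lemma applied to the Möbius-invariant difference) gives the lower bound
\begin{equation*}
\|C_{\varphi_t}-C_{\varphi_s}\|_{A^p_\om}^p\gtrsim \sup_{a\in\D}\frac{(1-|a|)^{\gamma}}{\whw(a)}\int_{\D}\frac{\delta^p(\varphi_t(z),\varphi_s(z))}{|1-\overline a\varphi_t(z)|^{\gamma+1}}\,\om(z)\,dA(z),
\end{equation*}
and symmetrically with $\varphi_s$ in the denominator; hence eventual norm-continuity forces \eqref{normcontinuous}.

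For the converse (sufficiency) I would argue that \eqref{normcontinuous} controls $\|C_{\varphi_t}-C_{\varphi_s}\|_{A^p_\om}$ from above. The natural route is duality together with the atomic/Carleson description of $A^p_\om$ for $\om\in\DDD$: membership and norm in $A^p_\om$ are comparable to a sum over a lattice $\{a_k\}$ of hyperbolically separated points of the averages over pseudohyperbolic discs $\Delta(a_k,r)$, and the action of a composition operator difference $C_{\varphi_t}-C_{\varphi_s}$ on such a piece is controlled by how far $\varphi_t$ and $\varphi_s$ move points apart, i.e. precisely by $\delta(\varphi_t,\varphi_s)$. Concretely, one writes, for $f\in A^p_\om$,
\begin{equation*}
|f(\varphi_t(z))-f(\varphi_s(z))|\lesssim \delta(\varphi_t(z),\varphi_s(z))\,\sup_{w\in[\varphi_t(z),\varphi_s(z)]}(1-|w|^2)|f'(w)|,
\end{equation*}
and then estimates the $A^p_\om$-norm of the right side by a two-weight testing inequality whose Carleson constant is exactly the supremum appearing in \eqref{normcontinuous}; the passage from the pointwise derivative bound to the integral quantity with kernels $|1-\overline a\varphi_\bullet(z)|^{-(\gamma+1)}$ uses the standard reproducing formula $f(w)=\int_\D f(\xi)\,\overline{B^\om_w(\xi)}\,\om(\xi)\,dA(\xi)$ for $A^2_\om$ and its $A^p_\om$-analogue, whose kernel satisfies $|B^\om_w(\xi)|\lesssim (\whw(w)(1-|w|))^{-1}$ for $\xi$ near $w$ and the appropriate off-diagonal decay in powers of $|1-\overline w\xi|$ (here $\om\in\DDD$, not merely $\DD$, is what licenses the sharp two-sided kernel estimates and the $\Dd$-side reverse Carleson bounds). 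Matching the exponent $\gamma+1$ on both sides is where the freedom ``$\gamma>\gamma_0$'' is spent.

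Two further points must be handled. First, one must verify that the limit condition in \eqref{normcontinuous} being independent of the particular large $\gamma$ is consistent: increasing $\gamma$ only strengthens the decay of the kernels, and a standard covering/summation argument shows the vanishing of the supremum-integral for one admissible $\gamma$ is equivalent to its vanishing for all larger $\gamma$, so the ``there exist $\gamma_0$'' / ``for all $\gamma>\gamma_0$'' formulation is coherent. Second, the reduction to $t,s>t_0$ uses the semigroup property: $C_{\varphi_s}-C_{\varphi_t}=(C_{\varphi_{s-t}}-I)C_{\varphi_t}$ for $s>t$, so eventual norm-continuity is really a statement about $\|C_{\varphi_r}-I\|_{A^p_\om}\to 0$ as $r\to 0^+$ composed with a fixed bounded operator $C_{\varphi_t}$, and one checks that the test-integral quantity behaves well under this composition — this is routine since $C_{\varphi_t}$ merely pushes the lattice $\{a_k\}$ forward inside $\D$.

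The main obstacle I expect is the sufficiency direction, specifically obtaining the \emph{sharp} upper bound $\|C_{\varphi_t}-C_{\varphi_s}\|_{A^p_\om}\lesssim \bigl(\text{supremum-integral}\bigr)^{1/p}$ with the same exponent $\gamma+1$ that appears in the necessity bound. For standard weights this is where the Kriete--MacCluer / Moorhouse type analysis of differences of composition operators enters, and here it must be redone with the kernel estimates for $\om\in\DDD$; the delicate part is that the pointwise derivative estimate above loses a factor unless one interpolates between ``$\varphi_t,\varphi_s$ close'' (where the mean-value bound is efficient) and ``$\varphi_t,\varphi_s$ far but both deep in $\D$'' (where one simply bounds each composition operator separately and uses that $\delta(\varphi_t(z),\varphi_s(z))$ is then bounded below), and to glue these regimes without losing the sharp power of $\delta^p$ one exploits the doubling of $\whw$ on both sides, i.e. the full strength of $\om\in\DDD$ rather than $\om\in\DD$ alone.
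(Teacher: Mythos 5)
Your necessity argument has a genuine gap, and it is exactly the trap the paper warns about. The inequality you invoke, $\bigl|(1-\overline a u)^{-\beta}-(1-\overline a v)^{-\beta}\bigr|\lesssim \delta(u,v)\bigl(|1-\overline a u|^{-\beta}+|1-\overline a v|^{-\beta}\bigr)$, is an \emph{upper} bound for the difference of kernel values, so testing $C_{\varphi_t}-C_{\varphi_s}$ against $f_{a,p,\gamma}$ with it can only bound $\|(C_{\varphi_t}-C_{\varphi_s})f_{a,p,\gamma}\|_{A^p_\om}$ from above; it cannot produce the claimed lower bound for the operator norm by the supremum in \eqref{normcontinuous}. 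For that you would need the reverse pointwise estimate $|f_{a,p,\gamma}(\varphi_t(z))-f_{a,p,\gamma}(\varphi_s(z))|\gtrsim\delta(\varphi_t(z),\varphi_s(z))\,(1-|a|)^{(\gamma+1)/p}|1-\overline a\varphi_t(z)|^{-(\gamma+1)/p}$, which is false pointwise (the two kernel values can nearly coincide while $\varphi_t(z)\neq\varphi_s(z)$). This single-test-function shortcut is precisely the erroneous estimate of \cite[p.795]{SE} used in \cite[Proposition 6]{LRW} and corrected in \cite{LSS}, as the paper explicitly notes. The paper's Proposition \ref{difference} circumvents it with the Koo--Wang lemma (Lemma \ref{xxxJ}): one tests against the \emph{pair} $f_a$ and $f_{t_Na}$, and the resulting lower bound holds only locally, for $\varphi_t(z)\in\Delta(a,r)$, giving $\|C_{\varphi_t}-C_{\varphi_s}\|^p\gtrsim (\varphi_t)_*(\sigma^p\om)(\Delta(a,r))/\om(S(a))$ for each $a$ (with a separate elementary argument for $|a|$ bounded away from $1$). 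The passage from these local pseudohyperbolic-disc quantities to the global kernel integral appearing in \eqref{normcontinuous} is then made through the Carleson-measure characterization \eqref{Carlson} together with \eqref{norm}, and this globalization step is exactly where $\om\in\DDD$ (rather than merely $\DD$) is needed. Your proposal contains neither the two-function lower bound nor this localization-and-globalization step, so the necessity direction is not established.

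On the sufficiency side your sketch is broadly compatible with the paper's route (split according to whether $\delta(\varphi_t(z),\varphi_s(z))$ is large or small, control the small-$\delta$ region by a local estimate on pseudohyperbolic discs, pass to pull-back measures by Fubini, and finish with Carleson-measure bounds), though the paper uses the local $L^p$ mean estimate of \cite[Lemma 1]{LiuRattya} rather than a Bloch-type derivative bound along a segment, and it observes that this direction needs only $\om\in\DD$ --- contrary to your remark that $\DDD$ is what licenses the upper bound; the $\Dd$ half of the hypothesis enters only in the necessity. Finally, note that the paper deduces Theorem \ref{theorem2} directly by applying the two-sided norm estimate of Proposition \ref{difference} with $\varphi=\varphi_t$, $\psi=\varphi_s$; no separate reduction via $C_{\varphi_s}-C_{\varphi_t}=(C_{\varphi_{s-t}}-I)C_{\varphi_t}$ is required.
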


The quantity on the left of \eqref{normcontinuous} without limit is an estimate of $\|C_{\varphi_t}-C_{\varphi_s}\|^p$. We will actually prove that the similar estimate remains hold for general $C_\varphi-C_\psi$ on $A^p_\om$ with $\om\in\DDD$ and any different analytic self-maps $\varphi$ and $\psi$ on $\D$. Although $C_\varphi-C_\psi$ is automatically bounded due to the subordination principle, the above estimate of the norm $C_\varphi-C_\psi$ requires the so-called Carleson measure. Recall that for a space of analytic functions $X$, a positive measure $\mu$ is said to be $p$-Carleson measure for $X$ if the identity operator $I_d: X\to L^p_\mu$ is bounded. It follows from \cite[Theorem 2]{LiuRattya} that a positive $\mu$ is a $p$-Carleson measure for $A^p_\om$ with $\om\in\DDD$ if and only if there exists $r=r(\om)\in(0,1)$ such that 
\begin{equation}\label{Carlson}
    \sup_{z\in\D}\frac{\mu(\Delta(z,r))}{\om(\Delta(z,r))}<\infty,
\end{equation}
where $\om(E)=\int_E\om(z)\,dA(z)$ for any $E\subset\D$.
The reason why the same method cannot be carried out in the case $A^p_\om$ with $\om\in\DD$ is that the same characterization \eqref{Carlson} is not valid anymore for those $ A^p_\om$, see \cite[Theoem 3.3]{pel2016} for example. Indeed, we will show that the same norm estimate of $C_\varphi-C_\psi$ for the case $\om\in\DDD$ is an upper bound for the norm $C_\varphi-C_\psi$ for the case $\om\in\DD$ but we do not know if it is also a lower bound or not. To get the general characterization for the eventual continuity of $\Ct$ on $A^p_\om$ with $\om\in\DD$, we stress here that an intriguing attempt is probably to combine the Carleson measure and the fact that every element of $\at$ is univalent. 

As explained above, we see that \eqref{normcontinuous} is a sufficient condition such that $\Ct$ is eventually norm continuous on $A^p_\om$ with $\om\in\DD$, which immediately implies that the class of $\at$ such that the corresponding $\Ct$ is norm continuous on those $A^p_\om$ is vast. To be more concrete, for a semigroup $\at$, if the exists a $t_0>0$ such that $\|\varphi_{t_0}\|_{H^\infty}<1$, then the corresponding $\Ct$ is eventually norm continuous on $A^p_\om$, provided $\om\in\DD$.

Another important regular property of $\Ct$ is eventual compactness, which says that there exists a $t_0>0$ such that for any $t>t_0$, $C_t$ is compact. It is well known that eventual compactness is stronger than eventual continuity. Indeed, \cite[Theorem 5.12]{EN1} indicates that the eventual compactness of a strongly continuous operator semigroup is equivalent to the eventual
continuity and the compactness of the resolvent (operator) of its infinitesimal generator. If the resolvent of the infinitesimal generator happens to be compact, then the spectrum of the generator only contains the point spectrum. Recall that the resolvent $R(\lambda, \Gamma)$ of $\Gamma$, the generator of composition semigroup $\Ct$ on the Banach space $X$ is defined by $(\lambda-\G)^{-1}$ for any $\lambda\in\rho(\G)=\C\setminus\sigma_X(\G)$, the resolvent set of $\G$. In general, it can be represented as 
$$
R(\lambda, \Gamma)(f)=\int_0^{\infty} e^{-\lambda t} C_t(f) \,dt, \quad f \in X.
$$ 
In particular, if the Denjoy-Wolff point of the induced semigroup $\at$ point is 0, then the resolvent of $\G$ has a neatly concrete representation:
$$
R(\lambda, \Gamma) f(z)=-\frac{1}{G^{\prime}(0)} \frac{1}{(h(z))^{-\frac{\lambda}{G^{\prime}(0)}}} \int_0^z f(\zeta)(h(\zeta))^{-\frac{\lambda}{G^{\prime}(0)}-1} h^{\prime}(\zeta) d \zeta,~~f\in X,
$$
where $G$ is the infinitesimal generator of $\at$ and $h$ is the corresponding Koenigs function. 

Now, we return to the weighted Bergman space $A^p_\om$ with $\om\in\DD$. We will show that $-G'(0)$ belongs to $\rho(\Gamma)$, which means that $R(-G'(0),\Gamma)$ is bounded. It is well known that $h$ is a certain spiral-like function, which is certainly univalent. However, the linear operator 
$$
R_h(f)(z)=\frac{1}{h(z)}\int_0^z f(\z) h'(\z)\,d\z\quad f\in\H(\D)
$$
is well-defined for a general univalent function $h$ with $h(0)=0$. This enables us to consider the boundedness and compactness on $A^p_\om$ with $\om\in\DD$ for the full range $0<p<\infty$ and for a general univalent function $h$ with $h(0)=0$. As a byproduct, we have the following result:

\begin{theorem}\label{theorem3}
Let $0<p<\infty$ and $\om\in\DD$. Suppose $h$ is a univalent function on $\D$ satisfying $h(0)=0$. Then the following statements are equivalent:
\begin{itemize}
    \item [(i)] $R_h$ is compact on $A^p_\om$;
    \item[(ii)] $\log\frac{h(z)}{z}\in\B_0$;
    \item[(iii)] $\log\frac{h(z)}{z}\in\VMOA$.
\end{itemize}
\end{theorem}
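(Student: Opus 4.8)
The plan is to prove $(i)\Leftrightarrow(ii)$ by operator theory and $(ii)\Leftrightarrow(iii)$ by the function theory of univalent maps. Since $R_{\lambda h}=R_h$ for $\lambda\neq0$ while $\log(\lambda h/z)$ and $\log(h/z)$ differ by a constant, I normalise $h'(0)=1$; put $g=\log(h/z)$, so that $g(0)=0$, $h=ze^{g}$, $h'/h=\tfrac1z+g'$, and recall that the distortion theorem for univalent functions gives the a priori bound $\sup_z(1-|z|^2)|g'(z)|<\infty$, i.e.\ $g\in\B$ for every admissible $h$. Two identities do all the work: writing $\Lambda F(z)=\tfrac1z\int_0^z F(\zeta)\,d\zeta$ for the averaging operator, which is compact on $\A$, one has
$$
R_h=M_{z/h}\,\Lambda\,M_{h'},\qquad (R_hf)'=\Bigl(\tfrac1z+g'\Bigr)\bigl(f-R_hf\bigr),\quad R_hf(0)=f(0),
$$
the first by direct computation, the second by differentiating $h(z)R_hf(z)=\int_0^z f(\zeta)h'(\zeta)\,d\zeta$; here $M_\psi$ denotes multiplication by $\psi$ and $z/h=e^{-g}$.

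$(iii)\Rightarrow(i)$. Assume $g\in\VMOA$. For $0<r<1$ set $h_r(z)=h(rz)$; up to an additive constant $g_r(z):=\log(h_r(z)/z)=g(rz)$ extends analytically across $\overline\D$, so $z/h_r=e^{-g_r}$ and $h_r'$ lie in the disc algebra, hence in $H^\infty$, and therefore $R_{h_r}=M_{z/h_r}\Lambda M_{h_r'}$ is compact, being the compact operator $\Lambda$ between two bounded multiplications. Since $g\in\VMOA\iff\|g-g_r\|_{\BMOA}\to0$ as $r\to1^-$, it now suffices to prove a quantitative bound $\|R_h-R_{h_r}\|_{\A}\lesssim\|g-g_r\|_{\BMOA}$ with implied constant depending only on $\|g\|_{\BMOA}$, $p$ and $\om$; one obtains it by substituting the difference of the formulas for $(R_hf)'$ and $(R_{h_r}f)'$ into the norm of $\A$. \textbf{This estimate is the step I expect to be the main obstacle.} Because $h$ need not be bounded, the multiplications $M_{z/h}$ and $M_{h'}$ are themselves \emph{unbounded} on $\A$, so $M_{z/h}\Lambda M_{h'}$ cannot be controlled by multiplying operator norms; one has to exploit the smoothing of the integral in $\Lambda$ together with the Koebe distortion bounds for the univalent map $h$, the Hardy--Littlewood pointwise estimate \eqref{eqx1}, and the known boundedness criterion for Volterra operators on $\A$. (A by-product is that $R_h$ is bounded on $\A$ for every univalent $h$ with $h(0)=0$.)

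$(i)\Rightarrow(ii)$. Suppose $R_h$ is compact but $g\notin\B_0$, and pick $a_n\to\partial\D$ with $(1-|a_n|^2)|g'(a_n)|\ge c>0$. Let $e_a=f_{a,p,\gamma}/\|f_{a,p,\gamma}\|_{\A}$ with $\gamma=\gamma(\om)$ large enough that $\|f_{a,p,\gamma}\|_{\A}^p\asymp\whw(a)(1-|a|)$; then $\|e_a\|_{\A}=1$ and $e_a\to0$ uniformly on compacta, so compactness gives $\|R_he_{a_n}\|_{\A}\to0$, hence also $\int_\D|(R_he_{a_n})'(z)|^pW(z)\,dA(z)\to0$, where $W$ is a derivative weight for which $\|F\|_{\A}^p\asymp|F(0)|^p+\int_\D|F'(z)|^pW(z)\,dA(z)$ (valid for $\om\in\DD$). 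Using $g\in\B$ and $(1-|a_n|^2)|g'(a_n)|\ge c$ one checks that $|\tfrac1z+g'|\asymp(1-|a_n|)^{-1}$ on a pseudohyperbolic disc $\Delta(a_n,\rho)$ of suitably small radius; then the identity $(R_he_{a_n})'=(\tfrac1z+g')(e_{a_n}-R_he_{a_n})$ gives $\int_{\Delta(a_n,\rho)}|e_{a_n}-R_he_{a_n}|^pW\,dA=o\bigl((1-|a_n|)^p\bigr)$, while standard local estimates give $\int_{\Delta(a_n,\rho)}|e_{a_n}|^pW\,dA\asymp(1-|a_n|)^p$ and, via \eqref{eqx1}, $\int_{\Delta(a_n,\rho)}|R_he_{a_n}|^pW\,dA\lesssim(1-|a_n|)^p\,\|R_he_{a_n}\|_{\A}^p$. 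Combining these three relations forces $\|R_he_{a_n}\|_{\A}^p\gtrsim1$, contradicting $\|R_he_{a_n}\|_{\A}\to0$; hence $g\in\B_0$.

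$(ii)\Rightarrow(iii)$. Together with the previous two implications this closes the circle (the reverse $\VMOA\subset\B_0$ being trivial). Here univalence is essential: $g=\log(h/z)$ is the logarithm of the difference quotient at the origin of the univalent map $h$, and for functions of this kind membership in $\B_0$ already forces membership in $\VMOA$. Concretely, $g'=h'/h-1/z$ and the univalence of $h$ yield $\int_{\Delta(a,r)}|g'(z)|^2\,dA(z)\lesssim\mathrm{Area}\,h(\Delta(a,r))$, and the $\B_0$ hypothesis, together with the growth and distortion estimates for (spiral-like) univalent functions recorded in \cite{Be2}, makes the right-hand side decay at the rate demanded by the Garsia-type characterisation of $\VMOA$; if a direct reference for ``$\B_0\Rightarrow\VMOA$ for logarithms of difference quotients of univalent functions'' is available, it may simply be cited. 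In particular the resulting equivalences show that $\B_0$ and $\VMOA$ coincide on $\{\log(h/z): h\ \text{univalent},\ h(0)=0\}$.
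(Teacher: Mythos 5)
Your overall architecture (prove the equivalence by relating $R_h$ to the symbol $g=\log(h/z)$, use Baernstein-type a priori information, and close the circle with a univalence-specific $\B_0\Leftrightarrow\VMOA$ statement) is in the right spirit, but the proposal has a genuine gap exactly at the point you yourself flag. In the direction (iii)$\Rightarrow$(i) everything hinges on the unproven estimate $\|R_h-R_{h_r}\|_{\A}\lesssim\|g-g_r\|_{\BMOA}$, and this is not a routine substitution: as you note, $M_{z/h}$ and $M_{h'}$ are unbounded, and no argument is given that converts the pointwise identity $(R_hf)'=(\tfrac1z+g')(f-R_hf)$ into an operator-norm bound on $\A$ for a general $\om\in\DD$ and all $0<p<\infty$. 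The paper avoids this difficulty entirely by an algebraic reduction to a Volterra-type operator: with $P_hf=\frac{1}{zh}\int_0^zf\zeta h'\,d\zeta$, $Q_hf=\frac1z\int_0^zf\frac{\zeta h'}{h}\,d\zeta$, one has $M_zP_h=R_hM_z$, $Q_h=P_h+Q_hP_h$ and $Q_h=J+L_hM_z$, where $L_h$ is the averaged integration operator with symbol $g$. Since $J$ is compact, compactness of $R_h$ is equivalent to compactness of $L_h$, which by \cite[Theorem 6.8]{pel2016} is equivalent to $g\in\mathcal{C}^1_0(\om^\star)$; boundedness comes for free from $g\in\BMOA\subset\mathcal{C}^1(\om^\star)$ (Baernstein), and the equivalence of $\mathcal{C}^1_0(\om^\star)$, $\B_0$ and $\VMOA$ for such $g$ follows from \eqref{relation} and Pommerenke \cite{Pom}. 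So the hard analytic content you are missing is precisely what the known characterization of compact Volterra operators on $\A$ supplies; without it (or a proof of your norm estimate), (iii)$\Rightarrow$(i) is incomplete.

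Two further points. Your (i)$\Rightarrow$(ii) argument invokes a Littlewood--Paley-type equivalence $\|F\|_{\A}^p\asymp|F(0)|^p+\int_\D|F'|^pW\,dA$ ``valid for $\om\in\DD$''; for general $0<p<\infty$ such a formula with the natural derivative weight is known to characterize the class $\DDD=\DD\cap\Dd$ and fails for $\om\in\DD\setminus\Dd$ (only $p=2$ is safe via $\om^\star$), so as written this step does not cover the stated hypotheses; the paper's route again sidesteps this because necessity of $\mathcal{C}^1_0(\om^\star)$ comes from the same Volterra characterization. Finally, your (ii)$\Rightarrow$(iii) step is only a pointer to a hoped-for reference; the paper does cite exactly such a result (\cite{Pom}), so this part is repairable, but as it stands the proposal proves none of the three implications completely.
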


The proof of the theorem absorbs from the method provided in \cite{AG4}, where it was used to characterize the compactness of the resolvent of the infinitesimal generator on Hardy spaces. The key point of this approach is to show that $R_h$ has `similar' properties as another novel linear operator $Q_h$, defined by 
$$
Q_h(f)(z)=\frac{1}{z}\int_0^z f(\z)\,d\z+\frac1z\int_0^z f(\z)\z\left(\log\frac{h(\z)}{\z}\right)^{\prime}\,d\z,\quad f\in\H(\D).
$$
Since $h$ is univalent with $h(0)=0$, an application of \cite[Theorem 2]{Ba} implies that $\log\frac{h(z)}{z}$ belongs to $\BMOA$. Then it follows from \cite[Proposition 6.1 and Theorem 6.8]{pel2016} that $R_h$ is automatically bounded on $A^p_\om$ with $\om\in\DD$. Likewise, the compactness part can be deduced similarly. 

The paper is organized as follows. In Section \ref{sec2}, we first characterize the spectra of some composition operators on $A^p_\om$, then using this characterization, we prove Theorem \ref{theorem1}. In Section \ref{sec3}, we characterize the eventual norm-continuity of $\Ct$ in terms of estimating the norm of the difference of two distinct composition operators on $A^p_\om$. Section \ref{sec4} contains the proof of Theorem \ref{theorem3} as well as several other discussions about the operator $R_h$.

\vskip 0.3cm
We finish the introduction with a couple of words about the notation used in this paper. Throughout the paper, the letter $C=C(\cdot)$ will denote an absolute constant whose value depends on the parameters indicated
in the parenthesis and may change from one occurrence to another. If there exists a constant
$C=C(\cdot)>0$ such that $a\le Cb$, then we write either $a\lesssim b$ or $b\gtrsim a$. In particular, if $a\lesssim b$ and
$a\gtrsim b$, then we denote $a\asymp b$ and say that $a$ and $b$ are comparable.

\section{Spectra of composition operators on $A^p_\om$}\label{sec2}

Suppose $\omega\in\widehat{\mathcal{D}}$ and $z\in\D\backslash\{0\}$. Let
$$
\omega^\star(z)=\int_{|z|}^1 s\omega(s)\log\frac{s}{|z|}\,ds.
$$
From \cite[Lemma 2.1]{pel2016}, we know
\begin{equation}\label{omstar}
    \om^\star(z)\asymp\whw(z)(1-|z|),\quad |z|>\frac12.
\end{equation}
In this section, we focus on the spectrum of a certain composition operator on weighted Bergman spaces with doubling weights. To be precise, the composition operator we consider here is confined to be univalent but not an automorphism with the fixed point in the interior of $\D$. The approach applied here is inherited from \cite{MS} in which the spectrum of the same composition operator on unweighted Bergman spaces was investigated. It is pointed out that a certain closed subspace of weighted Bergman spaces is supposed to be an efficient tool. Therefore, for a weight $\om$ and a fixed $m\in\N$, we define $A^p_{\om,m}=\{f\in A^p_\om: f(z)=z^m g(z), g\in A^p_\om\}$. In other words, $A^p_{\om,m}$ consists of all $f\in A^p_\om$ that has a zero of order at least $m$ at 0.

We need the following growth estimate of functions in $\a$. From now on, for any $f\in\H(\D)$, we set 
 $$
    M_p(r,f)=\left (\frac{1}{2\pi}\int_0^{2\pi}
    |f(re^{i\theta})|^p\,d\theta\right )^{\frac{1}{p}},\quad 0<p<\infty,
    $$
and
    $$
    M_\infty(r,f)=\max_{0\le\theta\le2\pi}|f(re^{i\theta})|.
    $$

\begin{lemma}\label{lemma: growth}
Let $0<p<\infty$ and $\omega\in\widehat{\mathcal{D}}$. For any $m\in\N$ there exists an $r_m$ such that
$$
|f(z)|\lesssim\frac{m|z|^m}{((1-|z|)\widehat{\omega}(r_m)\widehat{\omega}(z))^{1/p}}\|f\|_{\A},\quad f\in\a,~~z\in\D.$$
\end{lemma}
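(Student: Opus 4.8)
The plan is to control $|f(z)|$ for $f \in A^p_{\omega,m}$ by first extracting the factor $z^m$ and then applying the known growth estimate \eqref{eqx1} to the quotient $g(z) = f(z)/z^m$, with care taken to bound $\|g\|_{A^p_\omega}$ in terms of $\|f\|_{A^p_\omega}$. The subtlety is that dividing by $z^m$ behaves badly near the origin, so the estimate will only hold for $|z| \geq r_m$ for a suitably chosen radius $r_m$ depending on $m$ (and on $\omega$), and then be propagated to all of $\D$ by the maximum modulus principle via the subharmonicity of $|f|^p$.

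First I would fix $f \in A^p_{\omega,m}$ and write $f(z) = z^m g(z)$ with $g \in \H(\D)$; since $f \in A^p_\omega$, one checks $g \in A^p_\omega$ as well. For $|z| = r$ with $r$ bounded away from $0$, integrate $|f(re^{i\theta})|^p = r^{mp}|g(re^{i\theta})|^p$ over $\theta$ to get $M_p(r,f)^p = r^{mp} M_p(r,g)^p$. Multiplying by $\omega(r)$ and integrating in $r$ over, say, $[1/2,1]$ gives a lower bound of the form $\|g\|_{A^p_\omega}^p \lesssim C(m) \|f\|_{A^p_\omega}^p$; the constant $C(m)$ comes from the factor $r^{-mp} \leq 2^{mp}$ on the annulus $\{1/2 \leq |z| < 1\}$ together with the fact that the contribution to $\|g\|_{A^p_\omega}^p$ from $|z| < 1/2$ is controlled by the point evaluation estimate (the integrand is bounded on a compact set) and hence by $\|g\|_{A^p_\omega}^p$ on a slightly larger annulus — this circularity is broken by choosing $r_m$ close enough to $1$. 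More cleanly: since $|f|^p$ is subharmonic and vanishes to high order at $0$, for $|z|$ small one has $M_\infty(|z|,f) \lesssim |z|^m M_\infty(r_m, f)$ by Schwarz-type reasoning, and $M_\infty(r_m,f)$ is controlled by $\|f\|_{A^p_\omega}$ via \eqref{eqx1}, which produces the factor $\widehat{\omega}(r_m)^{-1/p}$.

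The core step is then: for $|z| \geq r_m$, apply \eqref{eqx1} to $g$, namely $|g(z)| \lesssim \|g\|_{A^p_\omega} \big((1-|z|)\widehat{\omega}(z)\big)^{-1/p}$, and multiply by $|z|^m$ to recover $f$. Combining with the bound $\|g\|_{A^p_\omega} \lesssim m^{1/p} \widehat{\omega}(r_m)^{-1/p}\|f\|_{A^p_\omega}$ (where the $m$ and the $\widehat{\omega}(r_m)$ are exactly the artifacts of the division) yields
$$
|f(z)| \lesssim \frac{m |z|^m}{\big((1-|z|)\widehat{\omega}(r_m)\widehat{\omega}(z)\big)^{1/p}}\|f\|_{A^p_\omega}, \quad |z| \geq r_m.
$$
For $|z| < r_m$ I would instead invoke subharmonicity of $|f|^p$ on the disk of radius $|z|$ together with the vanishing of order $m$ at the origin: writing $f(z) = z^m g(z)$ and using the maximum principle for $g$ on $\{|w| \leq r_m\}$, one gets $|f(z)| \leq |z|^m M_\infty(r_m, g) \lesssim |z|^m (1-r_m)^{-1/p}\widehat{\omega}(r_m)^{-2/p}\|f\|_{A^p_\omega}$, and since $\widehat{\omega}(z) \leq \widehat{\omega}(0) = \widehat{\omega}$ is bounded and $1-|z| \leq 1$, this is dominated by the same right-hand side (absorbing constants depending only on $r_m$, hence on $m$ and $\omega$, into $\lesssim$).

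The main obstacle I anticipate is the bookkeeping for $|z| < r_m$: one must be sure the estimate degrades only by a multiplicative constant and that the claimed $m$-dependence and $\widehat{\omega}(r_m)$-dependence are genuinely uniform, i.e. that no hidden factor of $(1-|z|)^{-1/p}$ survives when $z \to 0$ (it does not, because the $|z|^m$ from the zero at the origin kills any such blow-up there — the singularity in $(1-|z|)^{-1/p}$ only matters as $|z| \to 1$). A secondary technical point is the choice of $r_m$: it should be taken as, say, the solution of a relation like $r_m^m = 1/2$ or simply any fixed radius with $r_m \to 1$ as $m \to \infty$ arranged so that $r_m^{-m}$ stays comparable to a constant on the relevant annulus — the doubling property $\omega \in \widehat{\mathcal D}$ is what guarantees $\widehat{\omega}(r_m) > 0$ and makes all these comparisons legitimate.
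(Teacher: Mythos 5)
Your overall strategy is the same as the paper's: write $f=z^mg$, bound $\|g\|_{\A}$ by a constant times $\|f\|_{\A}$, then apply \eqref{eqx1} to $g$. The problem is that the central norm estimate is never actually established. As written, your splitting over $\{1/2\le|z|<1\}$ produces the factor $r^{-mp}\le 2^{mp}$, not the claimed constant of size $m^{1/p}\whw(r_m)^{-1/p}$, and your treatment of $\{|z|<1/2\}$ is, as you yourself note, circular; the later assertion $\|g\|_{\A}\lesssim m^{1/p}\whw(r_m)^{-1/p}\|f\|_{\A}$ is therefore an unproved claim. The only non-circular route available inside your argument is to control $g$ on $\{|z|<r_m\}$ by the maximum principle together with \eqref{eqx1} at radius $r_m$, and that inevitably costs a factor $(1-r_m)^{-1/p}$. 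Minimizing $r_m^{-m}(1-r_m)^{-1/p}$ over $r_m\in(0,1)$ gives a quantity comparable to $m^{1/p}$, so the best your scheme can deliver is $|f(z)|\lesssim m^{1/p}|z|^m\left((1-|z|)\whw(r_m)\whw(z)\right)^{-1/p}\|f\|_{\A}$. For $p\ge1$ this implies the stated bound, but the lemma is claimed for all $0<p<\infty$, and for $0<p<1$ one has $m^{1/p}>m$ and no choice of $r_m$ repairs this; your proposed fix of ``absorbing constants depending only on $r_m$, hence on $m$'' into $\lesssim$ is not admissible, since the point of displaying $m$ and $\whw(r_m)$ explicitly is that the implicit constant depends only on $p$ and $\om$.

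The missing idea is the paper's treatment of the inner region at the level of integral means rather than sup-norms: by monotonicity, $M_p(r,g)\le M_p(r_m,g)$ for $r\le r_m$, and instead of estimating $M_p(r_m,g)$ pointwise one reinserts it into the outer integral via $M_p^p(r_m,g)\int_0^{r_m}\om(r)r\,dr\le \frac{\int_0^{r_m}\om(r)r\,dr}{\int_{r_m}^1\om(r)r\,dr}\int_{r_m}^1M_p^p(r,g)\om(r)r\,dr$. Restoring the factor $r^{mp}\ge r_m^{mp}$ on $[r_m,1)$ and choosing $r_m=m^{-1/m}$, so that $r_m^{-mp}=m^p$, gives $\|g\|^p_{\A}\lesssim m^p\whw(r_m)^{-1}\|f\|^p_{\A}$ with a constant depending only on $\om$, for every $0<p<\infty$. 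Once this is in hand, a single application of \eqref{eqx1} to $g$ yields the stated inequality on all of $\D$; no case split at $|z|=r_m$ is needed, because \eqref{eqx1} is harmless near the origin --- the division by $z^m$ only ``behaves badly'' in the norm estimate for $g$, not in the pointwise estimate.
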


\begin{proof}
  When $m\geq2$, take $r_m=(\frac{1}{m})^\frac{1}{m}$. Then the definition of $\a$ indicates that for any $f\in\a$, there exists a $g\in\A$ such that $f(z)=z^m g(z)$.
By  the monotonicity of $M_p(r,f)$, we have
\begin{align*}
\|g\|_{A_\omega^p}^p&= \int_0^{r_m} M_p^p(r,g)\omega(r)rdr+\int_{r_m}^1 M_p^p(r,g)\omega(r)rdr\\
&\leq \int_0^{r_m} M_p^p(r_m,g)\omega(r)rdr+\int_{r_m}^1 M_p^p(r,g)\omega(r)rdr\\
&\leq\frac{\int_0^{r_m}\omega(r)rdr}{\int_{r_m}^1\omega(r)rdr}M_p^p(r_m,g)\int_{r_m}^1\omega(r)rdr+\int_{r_m}^1 M_p^p(r,g)\omega(r)rdr\\
&\leq \left(\frac{\int_0^{r_m}\omega(r)rdr}{\int_{r_m}^1\omega(r)rdr}+1\right)\int_{r_m}^1 M_p^p(r,g)\omega(r)r\,dr\\
&\leq r_m^{-mp}\left(\frac{\int_0^{1}\omega(r)rdr}{\int_{r_m}^1\omega(r)rdr}\right)\int_{r_m}^1 M_p^p(r,g)\omega(r)r^{mp+1}dr
\lesssim \f{m^p}{\widehat{\omega}(r_m)}\|f\|_{A_\omega^p}^p,
\end{align*}
where the last inequality holds because $\om$ is a radial weight. This together with \eqref{eqx1} gives
$$
|f(z)|=|z^m||g(z)|\lesssim\frac{|z|^m\|g\|_{A^p_\om}}{((1-|z|)\whw(z))^{1/p}}\lesssim\frac{m|z|^m\|f\|_{\A}}{((1-|z|)\widehat{\omega}(r_m)\widehat{\omega}(z))^{1/p}},
$$
which is the exact inequality we are aiming for.

When $m=1$, let $r_m=1/2$. By repeating the same steps above, we get the same result. The proof is complete.
\end{proof}

\begin{lemma}\label{le:functional3}
  Let $1\leq p<\infty$ and $\omega\in\widehat{\mathcal{D}}$. Then for any $m\in\mathbb{N}$, we have
$$\|\delta_z\|_{\a}\leq\|\delta_z\|_{\A}\lesssim 2^m\|\delta_z\|_{\a}, \quad|z|\geq\frac{1}{2}.$$
\end{lemma}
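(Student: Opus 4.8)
The plan is to treat the two inequalities separately, the left one being essentially automatic and the right one requiring just one well-chosen test function.

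First I would dispose of $\|\delta_z\|_{A^p_{\omega,m}}\le\|\delta_z\|_{A^p_\om}$: by definition $A^p_{\omega,m}$ is a linear subspace of $A^p_\om$ carrying the same norm, so the restriction of the bounded functional $\delta_z$ to $A^p_{\omega,m}$ has operator norm no larger than on the whole space. Here the hypothesis $|z|\ge\frac12$ plays no role.

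For the reverse estimate $\|\delta_z\|_{A^p_\om}\lesssim 2^m\|\delta_z\|_{A^p_{\omega,m}}$ I would exhibit, for each $z\in\D$ with $|z|\ge\frac12$, a single function in $A^p_{\omega,m}$ that is large at $z$ and small in $A^p_\om$-norm. Recall from the paragraph preceding \eqref{eq:functional} that there is $\gamma=\gamma(\om)>0$ such that $f_{a,p,\gamma}(w)=\left(\frac{1-|a|^2}{1-\overline{a}w}\right)^{(\gamma+1)/p}$ lies in $A^p_\om$ with $\|f_{a,p,\gamma}\|^p_{A^p_\om}\asymp\whw(a)(1-|a|)$ for every $a\in\D$. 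Fix such a $\gamma$ and set $g_z(w)=w^m f_{z,p,\gamma}(w)$. Then $g_z\in A^p_{\omega,m}$ because $g_z(w)=w^m h(w)$ with $h=f_{z,p,\gamma}\in A^p_\om$; moreover $g_z(z)=z^m$, so $|g_z(z)|=|z|^m\ge 2^{-m}$, while $|w|^{mp}\le1$ on $\D$ forces $\|g_z\|^p_{A^p_\om}\le\|f_{z,p,\gamma}\|^p_{A^p_\om}\asymp\whw(z)(1-|z|)$. Testing the functional norm against $g_z$ then gives
\[
\|\delta_z\|_{A^p_{\omega,m}}\ \ge\ \frac{|g_z(z)|}{\|g_z\|_{A^p_\om}}\ \gtrsim\ \frac{2^{-m}}{\big((1-|z|)\whw(z)\big)^{1/p}},
\]
and comparing with the upper bound $\|\delta_z\|_{A^p_\om}\lesssim\big((1-|z|)\whw(z)\big)^{-1/p}$ furnished by \eqref{eqx1} yields $\|\delta_z\|_{A^p_\om}\lesssim 2^m\|\delta_z\|_{A^p_{\omega,m}}$, as claimed.

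There is no genuinely hard step; the one point worth isolating is that the single function $w^m f_{z,p,\gamma}(w)$ does three jobs at once — it lies in the subspace $A^p_{\omega,m}$, its $A^p_\om$-norm is dominated by that of the extremal function $f_{z,p,\gamma}$, and it remains peaked at $z$ — so no separate construction tailored to the subspace is needed. The constant $2^m$ is surely far from sharp, but it is precisely the $m$-dependence used in the later essential-norm estimates on $A^p_{\omega,m}$, so there is no reason to optimize it here.
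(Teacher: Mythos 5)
Your proof is correct and follows essentially the same route as the paper: the left inequality comes from restricting the functional to the subspace, and the right one from testing against the very same function $\z^m\left(\frac{1-|z|^2}{1-\overline{z}\z}\right)^{(\gamma+1)/p}\in\a$, combined with the two-sided estimate \eqref{eq:functional} (equivalently the upper bound \eqref{eqx1}) for $\|\delta_z\|_{\A}$. No gaps; nothing further is needed.
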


\begin{proof}
It is clear that $\|\delta_z\|_{\A}\geq\|\delta_z\|_{\a}$. To prove the right inequality, for a fixed $z\in\D$, consider the function $g(\z)=g_{z,p,\g,m}(\z)=\z^m \left(\frac{1-|z|^2}{1-\overline{z}\z}\right)^{(\gamma+1)/p}.$ Then for some large enough $\gamma>0$, $g\in\a$. Thus, the definition of $\delta_z$, \eqref{eq:functional}, and Lemma \ref{lemma: growth} yield
$$
\|\delta_z\|_{\a}\gtrsim \frac{|z|^m}{\left((1-|z|)\widehat{\omega}(z)\right)^{1/p}}\gtrsim\frac{1}{2^m}\|\delta_z\|_{\A},\quad |z|\geq\frac{1}{2}.
$$
The proof is completed.
\end{proof}

The following result shows a close relation of the essential spectral radius of composition operators on different weighted Bergman spaces. Probably a classical proof originates from \cite{MS} in which only the case of $1<p<\infty$ was solved, while the case $p=1$ was missing. However, we will provide a trivial method that allows us to deal with the case $1\leq p<\infty$ directly, hence avoiding lots of laborious calculations. Indeed, the approach applied here relies on an asymptotic estimate of the essential spectral radius of a composition operator on the weighted Bergman space, which was demonstrated in \cite[Theorem 19]{PR2016}.

\begin{lemma}\label{essentialradius}
     Let $1\leq p<\infty$, $\omega\in\widehat{\mathcal{D}}$,  and let $\varphi$ be an analytic self-map of $\D$. Then
$$\left(r_{e,\A}(C_{\varphi})\right)^p=\left(r_{e,A_\omega^2}(C_{\varphi})\right)^2.$$
\end{lemma}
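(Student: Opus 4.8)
The plan is to reduce the equality of essential spectral radii on $A^p_\om$ and $A^2_\om$ to a single uniform formula for $r_{e,A^p_\om}(C_\varphi)$ that does not depend on $p$ in an essential way. The crucial input is the asymptotic estimate from \cite[Theorem 19]{PR2016}, which expresses $r_{e,A^p_\om}(C_\varphi)$ in terms of a quantity built purely from $\varphi$ and $\om$ --- something of the form
$$
r_{e,A^p_\om}(C_\varphi)^p \asymp \limsup_{|a|\to1^-}\ \frac{(1-|a|)\,\whw(a)}{\whw(\varphi(a))\,(1-|\varphi(a)|)}\cdot(\text{geometric factor})
$$
or more precisely an expression of the type $\bigl(\limsup_{r\to1}\, g(r)\bigr)^{1/p}$ where $g$ is independent of $p$. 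First I would write down that theorem carefully in the notation of the present paper, making explicit that the $p$-dependence enters only through an overall exponent $1/p$ (this is exactly why the norm-estimate \eqref{eqx1} and the functional estimate \eqref{eq:functional} carry the $1/p$ power, and why Lemma~\ref{le:functional3} loses only a factor $2^{m/p}$-type constant that is irrelevant to a $\limsup$).

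Concretely, the key steps in order: (1) Recall $\left(r_{e,A^p_\om}(C_\varphi)\right)^p = L(\varphi,\om)$, where $L(\varphi,\om)$ is the $p$-free limit superior quantity furnished by \cite[Theorem 19]{PR2016}. (2) Apply the same theorem with $p=2$ to get $\left(r_{e,A^2_\om}(C_\varphi)\right)^2 = L(\varphi,\om)$ with the \emph{same} $L(\varphi,\om)$, since that quantity involves only $\om$ and $\varphi$. (3) Conclude $\left(r_{e,A^p_\om}(C_\varphi)\right)^p = \left(r_{e,A^2_\om}(C_\varphi)\right)^2$. The role of Lemma~\ref{lemma: growth} and Lemma~\ref{le:functional3} here, if needed at all, is only to confirm that the various norm comparisons used in invoking \cite[Theorem 19]{PR2016} are valid uniformly for $1\le p<\infty$, including the endpoint $p=1$ which was the gap in the older approach of \cite{MS}.

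The main obstacle --- really the only substantive point --- is verifying that \cite[Theorem 19]{PR2016} genuinely applies for all $1\le p<\infty$ with the $p$-dependence isolated in the exponent, and in particular that it is valid at $p=1$. If \cite[Theorem 19]{PR2016} is stated only for $1<p<\infty$, I would need to either (a) check that its proof goes through verbatim at $p=1$ using that $A^1_\om$ still has bounded point evaluations and the Hardy--Littlewood estimate \eqref{eqx1} holds for $\om\in\DD$ (both noted in the excerpt), or (b) run an interpolation/limiting argument letting $p\downarrow1$, using that $r_{e,A^p_\om}(C_\varphi)^p$ is, by the formula, constant in $p$ so no limit is actually being taken. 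A secondary technical check is that the specific form of the limit superior in \cite[Theorem 19]{PR2016} really has no hidden $p$ beyond the outer power: one should track the statement and confirm the Carleson-type or counting-type quantity inside is dimensionless in $p$. Once that is in hand, the lemma is immediate, and I would keep the written proof to essentially three lines: cite the theorem twice (for general $p$ and for $p=2$) and equate.

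\begin{proof}
By \cite[Theorem 19]{PR2016}, the essential spectral radius of $C_\varphi$ on $\A$ admits an asymptotic formula of the form $\left(r_{e,\A}(C_\varphi)\right)^p = L(\varphi,\om)$, where the quantity $L(\varphi,\om)$ depends only on the weight $\om\in\widehat{\mathcal{D}}$ and the self-map $\varphi$, but not on $p\in[1,\infty)$; the parameter $p$ enters the estimate solely through the outer exponent. Applying the same result with $p$ replaced by $2$ yields $\left(r_{e,A^2_\om}(C_\varphi)\right)^2 = L(\varphi,\om)$ with the same $L(\varphi,\om)$. Comparing the two identities gives
$$
\left(r_{e,\A}(C_\varphi)\right)^p = L(\varphi,\om) = \left(r_{e,A^2_\om}(C_\varphi)\right)^2,
$$
as claimed.
\end{proof}
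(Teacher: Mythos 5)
Your plan founders on what \cite[Theorem 19]{PR2016} actually provides. That result is a two-sided \emph{asymptotic} estimate, with multiplicative constants depending on $p$ and $\om$, for the (essential) norm of $C_\varphi$ on $\A$; it is not an exact, $p$-free formula for the essential \emph{spectral radius}. Your step (1), ``$\left(r_{e,\A}(C_\varphi)\right)^p=L(\varphi,\om)$'', therefore conflates two different upgrades at once: it replaces $\asymp$ by $=$, and it replaces the essential norm by the essential spectral radius (recall $r_{e,\A}(C_\varphi)\le\|C_\varphi\|_{e,\A}$, with strict inequality in general). From the theorem alone you only get $\|C_\varphi\|^p_{e,\A}\asymp\|C_\varphi\|^2_{e,A^2_\om}$, and comparability up to constants cannot, by itself, yield the exact identity the lemma asserts; nor does your fallback (b) help, since arguing that $r_{e,\A}(C_\varphi)^p$ is ``constant in $p$ by the formula'' presupposes exactly the formula in question. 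Your worry about the endpoint $p=1$ is a side issue: the paper invokes the cited theorem with $p=q$ for all $1\le p<\infty$, and the endpoint is not where the difficulty lies.

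The missing idea — and the actual content of the paper's proof — is the iteration trick that launders away the constants. Apply the norm comparison not to $\varphi$ but to every iterate $\varphi_n$, using $C_{\varphi_n}=C_\varphi^n$: this gives $\|C_\varphi^n\|^p_{e,\A}\asymp\|C_\varphi^n\|^2_{e,A^2_\om}$ with constants independent of $n$ (they depend only on $p$ and $\om$, not on the symbol). Now feed this into the spectral radius formula $r_{e,\A}(C_\varphi)=\lim_{n\to\infty}\|C_\varphi^n\|_{e,\A}^{1/n}$: taking $n$th roots sends the fixed constants to $1$, so the two-sided estimate becomes the exact equality $\left(r_{e,\A}(C_\varphi)\right)^p=\left(r_{e,A^2_\om}(C_\varphi)\right)^2$. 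Without this limiting step your three-line argument does not close; with it, the proof is indeed short, but the citation must be used at the level of essential norms of all iterates, not as a formula for the spectral radius itself.
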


\begin{proof}
   Using \cite[Theorem 19]{PR2016} directly with the special case $p=q$, we immediately see that $\|C_\varphi\|^p_{\A}\asymp\|C_\varphi\|^2_{A^2_\om}$. By the definition of essential spectral radius $r_{e,\A}(C_{\varphi})=\lim_{n\rightarrow\infty}(\|C_{\varphi}^n\|_{e,\A})^{\frac1n}$ and the fact
$C_{\varphi_{n}}=C_{\varphi}^n$, replacing $\varphi$ by its $n$th iteration $\varphi_n$, we have
$$
\|C_{\varphi_{n}}\|_{e,\A}^p\asymp\|C_{\varphi_{n}}\|_{e,A_{\om}^2}^2, \quad 1\leq p<\infty.
$$
This easily leads to the identity we desire.
\end{proof}

The proof of Theorem \ref{theorem1} relies on the so-called iteration sequence. We say that $\{z_k\}$ is an iteration sequence if there exists $\varphi:\D\rightarrow\D$ such that $\varphi(z_k)=z_{k+1}$. The following lemma plays a critical role in our proof whose proof can be found in \cite[Lemmas 7.34 and 7.35]{CM}.
\begin{lemma}\label{le:iteration}
  Let $\varphi$ be an analytic self-map of $\D$, not an
automorphism and $\varphi(0)=0$. Given $0<r<1$, there exists $1\leq M_r<\infty$ such that if $\{z_k\}_{-K}^\infty$ is an iteration sequence for $\varphi$ with $|z_n|\geq r$ for some non-negative
integer $n$ and $\{w_k\}_{k=-K}^n$ are arbitrary complex numbers, then there exists an $f\in H^\infty$ satisfying
$$f(z_k)=w_k, \quad -K\leq k\leq n$$
and
$$\|f\|_{H^\infty}\leq M_r\sup\{|w_k|:-K\leq k\leq n\}.$$
Moreover, there exists $0<b<1$ such that for any iteration sequence $\{z_k\}$ we have
$$\frac{|z_{k+1}|}{|z_k|}\leq b,$$
whenever $|z_k|\leq \frac{1}{2}.$
\end{lemma}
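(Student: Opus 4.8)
The plan is to handle the two assertions separately; the second is elementary, and the first rests on a uniform interpolation estimate. For the modulus bound I would write $\varphi(z)=z\,g(z)$, where $g\in\H(\D)$ is obtained by removing the removable singularity of $\varphi(z)/z$ at the origin. The Schwarz lemma gives $|g|\le1$ on $\D$, and if $|g(z_0)|=1$ for some $z_0\in\D$ the maximum principle forces $g$ to be a unimodular constant, so that $\varphi$ is a rotation and hence an automorphism; therefore $|g|<1$ throughout $\D$. Since $\{z:|z|\le\frac12\}$ is a compact subset of $\D$, one has $b:=\max_{|z|\le1/2}|g(z)|<1$, whence $|z_{k+1}|=|z_k|\,|g(z_k)|\le b\,|z_k|$ whenever $|z_k|\le\frac12$. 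The same argument ($|g|<1$ on all of $\D$) yields $|z_{k+1}|<|z_k|$ whenever $z_k\neq0$; since $|z_n|\ge r>0$ forces $z_k\neq0$ for every $k\le n$, the moduli $|z_{-K}|,\dots,|z_n|$ are strictly decreasing, and in particular all the nodes $z_{-K},\dots,z_n$ lie in the annulus $\{r\le|z|<1\}$.

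For the interpolation assertion, the plan is to reduce it to a Carleson-type condition whose constant is controlled by $r$ (and $\varphi$): I would prove that there is $c=c(r,\varphi)>0$ such that every node sequence as above satisfies
$$
\min_{-K\le k\le n}\ \prod_{\substack{-K\le j\le n\\ j\neq k}}\delta(z_j,z_k)\ \ge\ c .
$$
Granting this, the constructive form of Carleson's interpolation theorem (Earl's theorem, say), in which the bound on $\|f\|_{H^\infty}$ depends only on the Carleson constant $c$ and not on the number of nodes, produces $f\in H^\infty$ with $f(z_k)=w_k$ for $-K\le k\le n$ and $\|f\|_{H^\infty}\le M(c)\sup_k|w_k|=:M_r\sup_k|w_k|$, which is exactly the assertion.

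To establish the uniform Carleson condition I would argue in two steps. First, a lower bound on consecutive gaps: $\delta(z_k,z_{k+1})=\delta\bigl(z_k,\varphi(z_k)\bigr)\ge\eta(r,\varphi)>0$ whenever $|z_k|\ge r$. On a compact sub-annulus $r\le|z|\le\rho<1$ this follows from continuity and the fact that a non-automorphism strictly decreases the modulus, so that $\delta(z,\varphi(z))$ does not vanish there; the remaining regime $|z|\to1$ must be treated via the Julia--Wolff--Carath\'eodory estimates for $\varphi$ near the boundary, using crucially that the Denjoy--Wolff point is the \emph{interior} point $0$, so that $\varphi$ pulls every point toward $0$ at a definite hyperbolic rate and cannot shadow $\partial\D$. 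This is exactly where the hypothesis that $\varphi$ is not an automorphism does the real work — for an irrational rotation, iteration sequences are \emph{not} uniformly separated, even after truncation — and it is the step I expect to be the main obstacle. Second, I would upgrade consecutive control to control over all pairs: because the hyperbolic positions of successive iterates spread apart along the orbit (again quantitatively, from the same contraction estimate), one obtains $\delta(z_j,z_k)\ge g_{r,\varphi}(|j-k|)$ for some function $g_{r,\varphi}$ with $\sum_{m\ge1}\bigl(1-g_{r,\varphi}(m)\bigr)<\infty$, and the product over $j\neq k$ then telescopes to $c(r,\varphi)\gtrsim\eta^2\exp\!\bigl(-C\sum_{m\ge2}(1-g_{r,\varphi}(m))\bigr)>0$. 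The finitely many nodes with $r\le|z_k|<\frac12$ — of which, by the modulus bound above, there are at most $O_{r,\varphi}(1)$ — can be folded into the interpolation at the cost of one more bounded factor. This is, in essence, the argument of \cite[Ch.~7]{CM}.
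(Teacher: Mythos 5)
Your handling of the second assertion (write $\varphi=zg$, apply Schwarz, take $b=\max_{|z|\le 1/2}|g|<1$) is complete, and your reduction of the interpolation assertion to a uniform separation condition for the nodes $z_{-K},\dots,z_n$ followed by Earl's constructive form of Carleson's theorem is the right frame; the paper itself gives no proof here, citing \cite[Lemmas 7.34 and 7.35]{CM}, and those lemmas encapsulate exactly this kind of statement. The genuine gap is that you never establish the uniform separation, and the mechanism you propose for it would fail. Near the boundary you only gesture at Julia--Wolff--Carath\'eodory (and yourself flag this as the main obstacle), and, more seriously, the upgrade from consecutive gaps to all pairs rests on the claim that successive iterates ``spread apart along the orbit \dots from the same contraction estimate.'' The Schwarz--Pick lemma gives the opposite inequality, $\delta(z_{j+1},z_{k+1})\le\delta(z_j,z_k)$: iterates never spread apart under $\varphi$, and a lower bound on the consecutive displacements $\delta(z_k,z_{k+1})$ by itself does not preclude the orbit from nearly returning to an earlier node (an orbit spiralling inward very slowly would do precisely that), so no function $g_{r,\varphi}(|j-k|)$ with summable defects follows from what you have proved.

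What closes the gap is a quantitative radial escape estimate, which also renders your step on consecutive gaps and the Julia--Wolff--Carath\'eodory discussion unnecessary. Since $g=\varphi/z$ maps $\D$ into $\D$ with $g(0)=\varphi'(0)$ and $|\varphi'(0)|<1$ (Schwarz, as $\varphi$ is not an automorphism), the Schwarz--Pick lemma yields $1-|g(z)|\ge \tfrac12\bigl(1-|\varphi'(0)|\bigr)(1-|z|)=:c\,(1-|z|)$ for all $z\in\D$, whence
\begin{equation*}
1-|z_{k+1}|=\bigl(1-|z_k|\bigr)+|z_k|\bigl(1-|g(z_k)|\bigr)\ge (1+cr)\bigl(1-|z_k|\bigr),\qquad |z_k|\ge r.
\end{equation*}
Because $|z_n|\ge r$ and the moduli are non-increasing, every node $z_{-K},\dots,z_n$ has modulus at least $r$, so $1-|z_k|\ge(1+cr)^{k-j}(1-|z_j|)$ for $-K\le j<k\le n$; combined with the elementary bound $\delta(z_j,z_k)\ge\frac{\bigl||z_j|-|z_k|\bigr|}{1-|z_j|\,|z_k|}$ (which follows from $|1-\bar z_j z_k|\ge 1-|z_j||z_k|$), this gives $1-\delta(z_j,z_k)\le 2(1+cr)^{-|k-j|}$. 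Hence $\inf_k\prod_{j\ne k}\delta(z_j,z_k)$ is bounded below by a positive constant depending only on $r$ and $|\varphi'(0)|$, uniformly over all admissible iteration sequences, and Earl's theorem then produces $f\in H^\infty$ with the bound $M_r\sup_k|w_k|$, exactly as you intended.
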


The above lemma enables us to simplify some notation. For an iteration sequence  $\{z_k\}_{-K}^\infty$, from now on, suppose $|z_0|\geq 1/2$ and
$n=\max\{k: |z_k|\geq 1/4\}.$ It follows from Lemma \ref{le:iteration} that there is a $1/2\leq b<1$ such that for any $k>n$,
$\frac{|z_{k+1}|}{|z_k|}\leq b$ and hence
$$|z_k|\leq |z_n|b^{k-n}, \quad k\geq n.$$
Now, we are ready to  
characterize the spectrum of a certain composition operator that will be used to prove Theorem ~\ref{theorem1}.

\begin{theorem}\label{spectrum}
    Let $1\leq p<\infty$ and $\om\in\DD$. Suppose $\varphi$ is a univalent self-map on $\D$ with the fixed point $a\in\D$ but not an automorphism. Then
    $$
    \sigma_{A^p_\om}(C_\varphi)=\{\lambda\in\C: |\lambda|<r_{e,A^p_\om}(C_\varphi)\}\cup\{(\varphi'(a))^n\}^\infty_{n=0}.
    $$
\end{theorem}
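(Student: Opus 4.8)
The plan is to follow the now-classical template of Cowen--MacCluer and Mouze--Saab, reducing first to the case of a fixed point at the origin. Since $\varphi$ has its interior fixed point at $a\in\D$, conjugating by the disk automorphism $\phi_a$ gives $\psi=\phi_a\circ\varphi\circ\phi_a$, which is univalent, non-automorphic, and fixes $0$ with $\psi'(0)=\varphi'(a)$. Because $C_{\phi_a}$ is bounded and invertible on $A^p_\om$ with $\om\in\DD$ (composition with an automorphism is bounded, and $\phi_a^{-1}=\phi_a$), the operators $C_\varphi$ and $C_\psi$ are similar, so they have the same spectrum and the same essential spectral radius. Thus it suffices to prove the statement when $a=0$.

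So assume $\varphi(0)=0$. The containment $\{\lambda:|\lambda|<r_{e,A^p_\om}(C_\varphi)\}\subseteq\sigma_{A^p_\om}(C_\varphi)$ should come from general principles: any $\lambda$ in the open disk of radius equal to the essential spectral radius is not in the Fredholm/essential resolvent, so it lies in the spectrum (indeed in the boundary of the spectrum this disk is filled in because the essential spectrum is contained in the spectrum and the complement of the spectrum is open and unbounded, forcing the open disk to be swallowed). The eigenvalue part, $\{\varphi'(0)^n\}_{n=0}^\infty\subseteq P\sigma(C_\varphi)$, follows by exhibiting eigenfunctions: $1$ is an eigenfunction for eigenvalue $1$, and more generally the Koenigs function $\sigma$ of $\varphi$ (the solution of $\sigma\circ\varphi=\varphi'(0)\sigma$) has powers $\sigma^n$ that are genuine analytic functions, but one must check $\sigma^n\in A^p_\om$; alternatively, and more robustly, one uses that for each $n$ there is a function in $A^p_\om$ of the form $z^n+\cdots$ that is an eigenfunction, or simply that $C_\varphi$ restricted to polynomials-modulo-higher-order is triangular with diagonal entries $\varphi'(0)^n$. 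Actually the cleanest route, matching the reference to $A^p_{\om,m}$ introduced above, is to argue on the invariant subspaces $A^p_{\om,m}$: since $\varphi(0)=0$, $C_\varphi$ maps $A^p_{\om,m}$ into itself, the quotient $A^p_\om/A^p_{\om,m}$ is finite-dimensional and $C_\varphi$ acts on it with eigenvalues $\varphi'(0)^0,\dots,\varphi'(0)^{m-1}$, so each $\varphi'(0)^n$ is an eigenvalue; one then shows $r_{e,A^p_\om}(C_\varphi|_{A^p_{\om,m}})=r_{e,A^p_\om}(C_\varphi)$, so passing $m\to\infty$ recovers all the powers.

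For the reverse inclusion $\sigma_{A^p_\om}(C_\varphi)\subseteq\{\lambda:|\lambda|<r_{e,A^p_\om}(C_\varphi)\}\cup\{\varphi'(0)^n\}$, suppose $\lambda$ is in the spectrum with $|\lambda|\ge r_{e,A^p_\om}(C_\varphi)$ and $\lambda\ne\varphi'(0)^n$ for all $n$. Then $\lambda-C_\varphi$ is Fredholm of index $0$, so it fails to be invertible only if it has a nontrivial kernel, i.e.\ $\lambda\in P\sigma(C_\varphi)$. The task is then to show there is no eigenfunction: if $f\in A^p_\om$ satisfies $f\circ\varphi=\lambda f$, evaluate along an iteration sequence $z_{k+1}=\varphi(z_k)$ starting from a point $z_0$ with $|z_0|\ge 1/2$, giving $f(z_k)=\lambda^{-k}f(z_0)$ for forward iterates and $f(z_{-k})=\lambda^k f(z_0)$ going back. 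Using the growth estimate \eqref{eqx1}, the decay of $|z_k|$ (geometric, by Lemma~\ref{le:iteration}), the interpolation statement of Lemma~\ref{le:iteration} to build test functions in $H^\infty\subseteq A^p_\om$ dual-pairing against $f$, together with the functional-norm estimates \eqref{eq:functional} and Lemmas~\ref{le:functional3} and \ref{essentialradius} to convert $|\lambda|\ge r_{e,A^p_\om}(C_\varphi)=(r_{e,A^2_\om}(C_\varphi))^{2/p}$ into a usable quantitative bound, one derives that $f$ must vanish identically, a contradiction.

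The main obstacle I anticipate is exactly this last step: controlling the eigenfunction along the iteration sequence so that the hypothesis $|\lambda|\geq r_{e,A^p_\om}(C_\varphi)$ bites. One needs the sharp relationship between the essential spectral radius and the asymptotic behavior of the iteration sequence near the boundary (the Nevanlinna-counting / angular-derivative data at boundary fixed points of the iterates), which for doubling weights is supplied by \cite[Theorem 19]{PR2016} via Lemma~\ref{essentialradius}, reducing everything to the Hilbert space $p=2$ case where the formula for $r_{e,A^2_\om}(C_\varphi)$ is known. Assembling the interpolation argument of Lemma~\ref{le:iteration} with the two-sided functional estimate \eqref{eq:functional} to extract a contradiction --- essentially showing that an eigenfunction for $|\lambda|$ too large would have to grow faster near $\partial\D$ than \eqref{eqx1} permits, while an eigenfunction for $|\lambda|$ too small near $0$ forces infinitely many vanishing Taylor coefficients --- is where the real work lies, and handling $p=1$ uniformly with $p>1$ (the gap in \cite{MS}) is managed precisely by Lemma~\ref{le:functional3} and Lemma~\ref{essentialradius}, which were set up for this purpose.
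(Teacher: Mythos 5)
Your reduction to $a=0$ and your treatment of the regime $|\lambda|>r_{e,A^p_\om}(C_\varphi)$ are essentially fine (Fredholm of index zero via \cite[Proposition 2.2]{BS}, so a non-eigenvalue outside the essential spectral radius cannot be in the spectrum), although you make this part harder than it is: once $\lambda$ is forced to be an eigenvalue, Koenigs' theorem already says that the \emph{only} possible eigenvalues of $C_\varphi$ (for any analytic eigenfunction, with no integrability hypothesis) are $(\varphi'(0))^n$, so no iteration-sequence argument is needed there. The genuine gap is in the other inclusion. You claim that $\{\lambda:|\lambda|<r_{e,A^p_\om}(C_\varphi)\}\subseteq\sigma_{A^p_\om}(C_\varphi)$ ``comes from general principles'' because the essential spectral radius equals $r_{e,A^p_\om}(C_\varphi)$ and the resolvent set is open and unbounded. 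This is false as a general principle: the essential spectrum is merely a compact set whose spectral radius is $r_{e,A^p_\om}(C_\varphi)$; it can be, for example, a circle (think of an invertible operator such as a bilateral shift, whose spectrum and essential spectrum are the unit circle while the open disk lies in the resolvent set), and nothing about the unbounded component of the resolvent set forces the disk to be ``swallowed.'' Filling in the disk is precisely the substance of the theorem, and it is exactly here that the paper does its real work.

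Concretely, the paper proves that for every $0<|\lambda|<r_{e,A^p_\om}(C_\varphi)$ the operator $(C_m-\lambda I)^*$ is not bounded below, where $C_m$ is the restriction of $C_\varphi$ to the invariant subspace $\a$: one fixes $m$ so large that $b^m/|\lambda|$ is small (Lemma \ref{le:iteration} and Lemma \ref{lemma: growth} make the functional $L_\lambda(f)=\sum_{k\geq-K}\lambda^{-k}f(z_k)$ bounded on $\a$ along an iteration sequence), uses the identity $(C_m^*-\lambda)L_\lambda=-\lambda^{K+1}\delta_{z_{-K}}$, gets the lower bound $\|L_\lambda\|_{\a}\gtrsim\|\delta_{z_0}\|_{\a}$ from the $H^\infty$ interpolation of Lemma \ref{le:iteration} applied to a test function of the form $z^m g h$, and then chooses the starting point $w$ tending to $\partial\D$ so that $\|\delta_{\varphi_K(w)}\|_{\a}/\|\delta_w\|_{\a}\gtrsim 2^{-m}\tau^K$ with $|\lambda|<\tau<r_{e,A^p_\om}(C_\varphi)$; this last step uses the univalence of $\varphi$ through the formula $\|C_{\varphi_K}\|^2_{e,A^2_\om}=\limsup_{|w|\to1^-}\omega^\star(w)/\omega^\star(\varphi_K(w))$ from \cite{PR2016}, together with \eqref{omstar}, Lemma \ref{le:functional3} and Lemma \ref{essentialradius}. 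The resulting estimate $\|(C_m^*-\lambda)L_\lambda\|/\|L_\lambda\|\lesssim|\lambda|(|\lambda|/\tau)^K\to0$ shows $\lambda\in\sigma_{\a}(C_m)\subseteq\sigma_{A^p_\om}(C_\varphi)$ (the inclusion of spectra coming from the argument of \cite[Lemma 7.17]{CM}). Your proposal deploys the iteration-sequence and duality machinery only in the large-$|\lambda|$ regime, where it is redundant, and supplies no argument at all for the small-$|\lambda|$ regime, which is the heart of the proof; as it stands, the proposal does not prove the stated equality.
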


\begin{proof}
 Without loss of generality,  assume that $a=0$. Since all polynomials are contained in weighted Bergman spaces, it follows from the proof of \cite[Lemma 2]{K} that $z^n$ does not belong to the range of $(C_\varphi-(\varphi'(0))^nI)$, and hence $\{(\varphi'(0))^n\}_{n=0}^\infty$ is in the spectrum of $C_\varphi$.
If $|\lambda|>r_{e,\A}(C_\varphi)$, by \cite[Proposition 2.2]{BS}, we see that $(C_\varphi-\lambda I)$ is Fredholm of index 0, which means that it is either invertible or has non-trivial
kernel. Therefore if further $\lambda\in\sigma_{\A}(C_\varphi)$, then $\lambda$ is an eigenvalue of $C_\varphi$. Moreover, Koenigs Theorem (\cite[Theorem 2.63]{CM}) tells that the eigenvalues of $C_\varphi$ must be of form $(\varphi'(0))^n$. Therefore, to finish the proof, it suffices to verify
\begin{align}\label{0109-1}
\{\lambda\in\mathbb{C}: |\lambda|\leq r_{e,\A(C_{\varphi})}\}\subseteq\sigma_{\A}(C_\varphi).
\end{align}

Trivially, $0\in\sigma_{\A}$, which implies that (\ref{0109-1}) holds when $r_{e,\A(C_{\varphi})}=0$.
Now, we assume $r_{e,\A(C_{\varphi})}>0$. We only need to deal with the case when $0<|\lambda|<r_{e,\A(C_{\varphi})}$ since spectrum is closed.  If we can prove that $(C_\varphi-\lambda I)^*$ is not invertible, then it will be done.  Let now $C_m$ be the restriction of $C_\varphi$ acting on $\a$. Note that $\varphi(0)=0$ implies that $\a$ is an invariant subspace of $C_\varphi$. Then the same proof employed in \cite[Lemma 7.17]{CM} (in our case, we only need to take $\mathcal{K}=\overline{span \{1, z, z^2,\cdots, z^{m-1}\}}$, and $\mathcal{L}=\a$) shows that if $(C_\varphi-\lambda I)$ is invertible on $A^p_\om$ then $(C_m-\lambda I)$ must be invertible on $\a$, which simply yields $\sigma_{\a}(C_m)\subseteq\sigma_{\A}(C_\varphi)$.
 Therefore, to show that $(C_\varphi-\lambda I)^*$ is not invertible on $A^p_\om$, it suffices to prove that $(C_m-\lambda I)^*$ is not bounded below.

Let now $r=1/4$. By Lemma \ref{le:iteration}, we can choose two constants $b$ and $M$ satisfying $1/2\leq b<1$ and  $1\leq M<\infty$. Fix $m$ sufficiently large such that
$$\frac{b^m}{|\lambda|}<\frac{1}{SM}.$$
Here, $S$ is large enough and will be fixed later. For any iteration sequence $\{z_k\}_{-K}^\infty$, we can define a linear functional $L_{\lambda}$ on $\a$ by
$$
L_\lambda(f)=\sum_{k=-K}^{\infty}\lambda^{-k}f(z_k).
$$
Indeed, by the hypothesis that $|z_k|<1/4$ when $k>n$ and Lemma \ref{lemma: growth}, we deduce that
\begin{equation}\nonumber
\begin{split}
\left|\sum_{k=n+1}^{\infty}\lambda^{-k}f(z_k)\right|\lesssim&\sum_{k=n+1}^{\infty}|\lambda|^{-k}\frac{m|z_k|^m}{((1-|z_k|)\widehat{\omega}(r_m)\widehat{\omega}(z_k))^{1/p}}\|f\|_{\A}
\\\lesssim&\f{m\|f\|_{\A}}{(\widehat{\omega}(r_m))^\f{1}{p}}\sum_{k=n+1}^{\infty}\frac{|z_k|^m}{|\lambda|^k}.
\end{split}
\end{equation}
Here, $r_m=(\f{1}{m})^\f{1}{m}$.
However, since $|z_k|\leq|z_n|b^{k-n}$ and $\frac{b^m}{|\lambda|}<1$,  $L_{\lambda}$ is a bounded linear functional on $\a$. Therefore, for every $f\in \a$, we have 
$$
(C_m^*-\lambda)L_\lambda f=L_\lambda  (C_m-\lambda)f=L_\lambda(f\circ\varphi)-\lambda L_\lambda f=-\lambda^{K+1}f(z_{-K}),
$$
which gives
$$
(C_m^*-\lambda)L_\lambda=-\lambda^{K+1}\delta_{z_{-K}}.
$$
Now, for a sufficiently large $\gamma$ depending on $\om$, let $\{w_k\}_{k=-K}^n$ be the sequence such that $w_k=0$ when $k\neq0$ and $-K\leq k<n$; and when $k=0$ and $k=n$, $w_k=e^{i\theta_k}$, where $\theta_k=-\arg\left(\frac{z_k^m}{\lambda^k(1-\overline{z_0}z_k)^{\frac{\gamma+1}{p}}}\right)$.  Applying this $\{w_k\}_{k=-K}^n$ in Lemma \ref{le:iteration}, we see that exists $h\in H^\infty$ with $\|h\|_{H^\infty}\leq M$ such that:
\begin{enumerate}
  \item[(i)] When $k=0$ and $k=n$, $|h(z_k)|=1$.
  \item[(ii)]  When $k=0$ and $k=n$, $\frac{z_k^m h(z_k)}{\lambda^k(1-\overline{z_0}z_k)^{\frac{\gamma+1}{p}}}>0$.
  \item[(iii)]When $k\neq0$ and $-K\leq k<n$, $h(z_k)=0$.
\end{enumerate}
Therefore, for the above sufficiently large $\gamma$, we have
$$
L_\lambda\left(\frac{z^m(1-|z_0|^2)^{\frac{\gamma}{p}}h(z)}{(\widehat{\omega}(z_0))^{1/p}(1-\overline{z_0}z)^{\frac{\gamma+1}{p}}}\right)
=\sum_{k=-K}^{\infty}\lambda^{-k}\frac{z_k^m(1-|z_0|^2)^{\frac{\gamma}{p}}h(z_k)}{(\widehat{\omega}(z_0))^{1/p}(1-\overline{z_0}z_k)^{\frac{\gamma+1}{p}}}
=\sum_{k=-K}^\infty J_{k,h}.
$$
Note that $J_{k,h}(-K\leq k\leq n)$ are zeros unless $k=0,n$. Also,
$$
J_{0,h}=\frac{|z_0|^m}{(\widehat{\omega}(z_0))^{1/p}(1-|z_0|^2)^{\frac{1}{p}}}
$$
and
$$J_{n,h}=\frac{|z_n|^m(1-|z_0|^2)^{\frac{\gamma}{p}}}{|\lambda|^n|(\widehat{\omega}(z_0))^{1/p}|1-z_0z_n|^{\frac{\gamma+1}{p}}}
\geq\frac{|z_n|^m(1-|z_0|^2)^{\frac{\gamma}{p}}}{2^{\frac{\gamma+1}{p}}|\lambda|^n(\widehat{\omega}(z_0))^{1/p}}.$$
Since
$$\sum_{k=n+1}^{\infty}\left(\frac{b^m}{|\lambda|}\right)^{k-n}<\frac{\frac{1}{SM}}{1-\frac{1}{SM}}\leq\frac{1}{(S-1)M},$$
we obtain
\begin{equation}\nonumber
\begin{split}
\left|\sum_{k=n+1}^\infty J_{k,h}\right|
=&\left|\sum_{k=n+1}^{\infty}\lambda^{-k}\frac{z_k^m(1-|z_0|^2)^{\frac{\gamma}{p}}h(z_k)}{(\widehat{\omega}(z_0))^{1/p}(1-\overline{z_0}z_k)^{\frac{\gamma+1}{p}}}\right|
\\\leq&(\frac{4}{3})^{\frac{\gamma+1}{p}}\frac{M|z_n|^m(1-|z_0|^2)^{\frac{\gamma}{p}}}{|\lambda|^n(\widehat{\omega}(z_0))^{1/p}}\sum_{k=n+1}^{\infty}\left(\frac{b^m}{|\lambda|}\right)^{k-n}
\\\leq&(\frac{4}{3})^{\frac{\gamma+1}{p}}\cdot\frac{1}{S-1}\cdot\frac{|z_n|^m(1-|z_0|^2)^{\frac{\gamma}{p}}}{|\lambda|^n(\widehat{\omega}(z_0))^{1/p}}
\\\leq&(\frac{1}{3})^{\frac{\gamma+1}{p}}\cdot\frac{|z_n|^m(1-|z_0|^2)^{\frac{\gamma}{p}}}{|\lambda|^n(\widehat{\omega}(z_0))^{1/p}}.
\end{split}
\end{equation}
Here, we choose a large enough constant $S>0$ such that $\frac{4^\f{\gamma+1}{p}}{S-1}<1$. Therefore,
\begin{align*}
\left|L_\lambda\left(\frac{z^m(1-|z_0|^2)^{\frac{\gamma}{p}}h}{(\widehat{\omega}(z_0))^{1/p}(1-\overline{z_0}z)^{\frac{\gamma+1}{p}}}\right)\right|
&\geq J_{0,h}+J_{n,h}-\left|\sum_{k=n+1}^\infty J_{k,h}\right| \\
&\geq\frac{|z_0|^m}{(\widehat{\omega}(z_0))^{1/p}(1-|z_0|^2)^{\frac{1}{p}}}.
\end{align*}
Write $g(z)=\frac{(1-|z_0|^2)^{\frac{\gamma}{p}}}{(\widehat{\omega}(z_0))^{1/p}(1-\overline{z_0}z)^{\frac{\gamma+1}{p}}}\in \A.$
%By Lemma 1,  $\|g\|_{A_\omega^p}\approx 1$.
Then we have
$$
\left\|\frac{z^m(1-|z_0|^2)^{\frac{\gamma}{p}}h}{(\widehat{\omega}(z_0))^{1/p}(1-\overline{z_0}z)^{\frac{\gamma+1}{p}}}\right\|_{\a}
=\|z^mgh\|_{\a}\leq CM,
$$
and further,
\begin{equation*}
\|L_{\lambda}\|_{\a} \geq\frac{|L_\lambda(z^m gh)|}{CM}
\gtrsim\frac{|z_0|^m}{(\widehat{\omega}(z_0))^{1/p}(1-|z_0|^2)^{\frac{1}{p}}}\asymp\|\delta_{z_0}\|_{\a}.
\end{equation*}
We are now ready to make a judicious choice of iteration sequence. Recall that
$$r_{e,\A}(C_\varphi)=\lim_{n\rightarrow\infty}\|C_{\varphi_n}\|_{e,\A}^{1/n}=\inf_{n\in\mathbb{N}}\|C_{\varphi_n}\|_{e,\A}^{1/n}.$$
Since $\varphi$ is univalent, by Proposition 18 and Theorem 19 in \cite{PR2016}, we have
$$\|C_{\varphi_n}\|_{e,A_\omega^2}^2=\limsup_{|w|\rightarrow 1^-}\frac{\omega^\star(\varphi_n^{-1}(w))}{\omega^\star(w)}=\limsup_{|w|\rightarrow 1^-}\frac{\omega^\star(w)}{\omega^\star(\varphi_n(w))}.$$
For any $\lambda$ with $0<|\lambda|<r_{e,\A}(C_\varphi)$,  choose  $\tau$ satisfying $|\lambda|<\tau<r_{e,\A}(C_\varphi)$. Applying Lemma \ref{essentialradius}, we deduce
$$r_{e,\A}(C_\varphi)=\lim_{n\rightarrow\infty}\|C_{\varphi_n}\|_{e,\A}^{1/n}=\lim_{n\rightarrow\infty}\left(\|C_{\varphi_n}\|_{e,A_\omega^2}^{2/p}\right)^{1/n}.$$
That is, there exists $N$, such that for any $n>N$,
$$
\|C_{\varphi_n}\|_{e,A_\omega^2}^{2/p}>\tau^n.
$$
Thus, for any $K\geq N$, we can choose $w$ approaching  the boundary of $\D$ such that
\begin{enumerate}
  \item[(i)]$\left(\frac{\omega^\star(|w|)}{\omega^\star(|\varphi_K(w)|)}\right)^{1/p}\geq\tau^K$.
  \item[(ii)] $|\varphi_K(w)|\geq 1/2$.
  \item [(iii)]$\frac{\|\delta_{\varphi_K(w)}\|_{\a}}{\|\delta_w\|_{\a}}
      \gtrsim\frac{1}{2^m}\frac{\|\delta_{\varphi_K(w)}\|_{\A}}{\|\delta_w\|_{\A}}.$
\end{enumerate}
Combining (i), (iii), \eqref{omstar} and Lemma \ref{le:functional3}, we deduce that
$$
\frac{\|\delta_{\varphi_K(w)}\|_{\a}}{\|\delta_w\|_{\a}}\gtrsim \frac{1}{2^m}\tau^K.
$$
With this choice of $w$, we get an iteration sequence $\{z_k\}_{k=-K}^{\infty}$ by setting $z_{-K}=w$ so that $z_0=\varphi_K(w)$ and $|z_0|\geq 1/2$. At this point, the positive integer $K$ is still
arbitrary, except for the requirement that $K\geq N$. Our estimates say
\begin{equation}\nonumber
\begin{split}
\frac{\|(C^*_m-\lambda)L_{\lambda}\|_{\a}}{\|L_{\lambda}\|_{\a}}\lesssim&\frac{|\lambda|^{K+1}\|\delta_{z_{-K}}\|}{\|\delta_{z_{0}}\|}
\lesssim|\lambda|\left(\frac{\lambda}{\tau}\right)^K.
\end{split}
\end{equation}
If $K\geq N$ large enough, then $\left(\frac{\lambda}{\tau}\right)^K\rightarrow 0$, which means that $(C^*_m-\lambda)$ is not bounded below. The proof is completed.
\end{proof}

We first assert that each semigroup $\at$ induces a strongly continuous composition semigroup on $A^p_\om$, provided $1\leq p<\infty$ and $\om\in\DD$. Although the proof is standard, we still present it for the sake of the completeness of the statement.

\begin{proposition}
   Let $1\leq p<\infty$ and $\omega\in\widehat{\mathcal{D}}$. Suppose $\at$ is a semigroup of analytic self-maps of $\D$ with the infinitesimal generator $G$. Then the corresponding composition semigroup $\Ct$ is strongly continuous on $A^p_{\omega}$ with the infinitesimal generator $\G$:
\begin{equation}\label{Gamma}   
\G f=Gf'
\end{equation}
on the domain
$$
D(\G)=\{f\in A^p_{\omega}: Gf'\in A^p_{\omega}\}.
$$
Moreover, $\Ct$ is uniformly continuous on $A^p_{\omega}$ if and only if $\at$ is trivial.
\end{proposition}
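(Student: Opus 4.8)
\medskip

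\noindent\emph{Proof proposal.} The plan is to verify three things in turn: (a) $\Ct$ is strongly continuous on $\A$; (b) $D(\G)=\{f\in\A:Gf'\in\A\}$ and \eqref{Gamma} holds; (c) $\Ct$ is uniformly continuous if and only if $\at$ is trivial. For (a) the only genuinely non-formal ingredient is a \emph{uniform} bound $\sup_{0\le t\le 1}\|C_{\varphi_t}\|_{\A}<\infty$. To get it, write $\varphi_t=\phi_{a_t}\circ\psi_t$ with $a_t=\varphi_t(0)$, $\phi_a(z)=\frac{a-z}{1-\overline a z}$, and $\psi_t(0)=0$; Littlewood's subordination principle (monotonicity of $M_p(r,\cdot)$ integrated against the radial weight $\om$) gives $\|C_{\psi_t}\|_{\A}\le 1$, so $\|C_{\varphi_t}\|_{\A}\le\|C_{\phi_{a_t}}\|_{\A}$, and since $t\mapsto\varphi_t(0)$ is continuous with $\varphi_0(0)=0$ (see \cite{BP}) we have $c:=\sup_{0\le t\le 1}|a_t|<1$, while the mapping properties of composition operators with M\"obius symbols on $\A$ for $\om\in\DD$ (via the Carleson-measure description available for doubling weights, cf.\ \cite{pel2016,LiuRattya}) yield $\sup_{|a|\le c}\|C_{\phi_a}\|_{\A}<\infty$. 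Next, polynomials are dense in $\A$ (the dilations $f_r(z)=f(rz)$ converge to $f$ in $\A$ by dominated convergence against $2M_p^p(s,f)\om(s)s\,ds$, and each $f_r$ is uniformly approximated by its Taylor polynomials). For a polynomial $q$ I would split $\|C_{\varphi_t}q-q\|_{\A}^p$ at a radius $\rho$: on $|z|\le\rho$ one has $q\circ\varphi_t\to q$ uniformly because $\varphi_t\to\mathrm{id}$ locally uniformly (a normal-families consequence of axiom (iii)), and on $\rho<|z|<1$ one has $|q\circ\varphi_t-q|\le 2\|q\|_{H^\infty}$ with $\int_{\rho<|z|<1}\om\,dA\to 0$; hence $\|C_{\varphi_t}q-q\|_{\A}\to 0$. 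Combining this with the uniform bound and density gives $\|C_{\varphi_t}f-f\|_{\A}\to 0$ for all $f\in\A$, i.e.\ strong continuity; the semigroup law then upgrades the bound to $\sup_{0\le t\le T}\|C_{\varphi_t}\|_{\A}<\infty$ for all $T$ and gives continuity of $t\mapsto C_{\varphi_t}f$ at every $t\ge 0$.

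For (b) the engine is the flow identity $\partial_s\varphi_s(z)=G(\varphi_s(z))$ from \cite{BP}, which by the chain rule yields $\partial_s f(\varphi_s(z))=(Gf')(\varphi_s(z))$ and hence the pointwise formula $C_tf(z)-f(z)=\int_0^t(Gf')(\varphi_s(z))\,ds$ for any $f\in\H(\D)$. If $f\in\A$ and $Gf'\in\A$, then $s\mapsto C_s(Gf')$ is an $\A$-valued continuous function on $[0,t]$ (by (a) plus local boundedness of $\|C_{\varphi_s}\|_{\A}$), so $I_t:=\int_0^t C_s(Gf')\,ds\in\A$; applying the bounded functional $\delta_z$ shows $\delta_z(I_t)=\int_0^t(Gf')(\varphi_s(z))\,ds=C_tf(z)-f(z)$ for every $z$, whence $C_tf-f=I_t$ in $\A$. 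Dividing by $t$ and using continuity of $s\mapsto C_s(Gf')$ at $s=0$ gives $t^{-1}(C_tf-f)\to Gf'$ in $\A$, so $f\in D(\G)$ and $\G f=Gf'$. Conversely, if $f\in D(\G)$ then $t^{-1}(C_tf-f)\to\G f$ in $\A$, hence pointwise, and the elementary limit $t^{-1}\bigl(f(\varphi_t(z))-f(z)\bigr)\to f'(z)G(z)$ forces $\G f=Gf'$, so in particular $Gf'=\G f\in\A$; this yields the stated domain and \eqref{Gamma}.

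For (c), if $\at$ is trivial then $C_{\varphi_t}\equiv I$ and $\Ct$ is trivially uniformly continuous. For the converse I would use standard $C_0$-semigroup theory (\cite{EN1}): uniform continuity forces $\G$ to be bounded on $\A$, hence $D(\G)=\A$, so by (b) $\|Gf'\|_{\A}\le\|\G\|\,\|f\|_{\A}$ for all $f\in\A$. Testing on $f(z)=z^n$ gives $n\|z^{n-1}G\|_{\A}\le\|\G\|\,\|z^n\|_{\A}$ for all $n\ge 1$. If $G\not\equiv 0$, then $m_0:=M_p(1/2,G)>0$, and by monotonicity of $r\mapsto M_p(r,G)$,
$$
\|z^{n-1}G\|_{\A}^p=2\int_0^1 r^{(n-1)p+1}M_p^p(r,G)\om(r)\,dr\ \ge\ 2m_0^p\int_{1/2}^1 r^{np+1}\om(r)\,dr,
$$
while $\|z^n\|_{\A}^p=2\int_0^1 r^{np+1}\om(r)\,dr\le 2^{-np}\!\int_{\D}\!\om\,dA+2\int_{1/2}^1 r^{np+1}\om(r)\,dr$. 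Since $\om\in\DD$, the tail $\whw$ decays at most polynomially, so $\int_{1/2}^1 r^{np+1}\om(r)\,dr\gtrsim\whw\bigl(1-\tfrac1{np}\bigr)\gtrsim (np)^{-\beta}$ for some $\beta=\beta(\om)\ge 0$, which dominates the exponentially small term $2^{-np}\int_{\D}\om\,dA$. Hence $\|z^{n-1}G\|_{\A}^p/\|z^n\|_{\A}^p\gtrsim m_0^p$ for all large $n$, forcing $n\lesssim\|\G\|$ for all large $n$, a contradiction. Therefore $G\equiv 0$, and since $\varphi_t$ is the flow of $G$ starting at the identity, each $\varphi_t$ is the identity, i.e.\ $\at$ is trivial.

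The step I expect to need real care is the uniform operator bound $\sup_{0\le t\le 1}\|C_{\varphi_t}\|_{\A}<\infty$ in part (a): unlike the standard-weight case, the norm of $C_{\phi_a}$ on $\A$ for $\om\in\DD$ has no simple closed form and must be controlled through the Carleson-measure machinery for doubling weights. Everything else is routine manipulation of the semigroup ODE, a Bochner-type integral, density, and moment estimates.
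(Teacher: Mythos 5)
Your proposal is correct, and while its skeleton (uniform bound plus density of polynomials for strong continuity, pointwise identification of the generator, testing on monomials for the uniform-continuity statement) matches the paper's, two steps are carried out by genuinely different means. First, for $\sup_{0\le t\le1}\|C_{\varphi_t}\|_{\A}<\infty$ the paper simply invokes the norm estimate $\|C_\vp\|\lesssim\bigl(\tfrac{1+|\vp(0)|}{1-|\vp(0)|}\bigr)^\eta$ from \cite[Theorem 15 and (4.7)]{PR2016}, whereas you refactor $\vp_t=\phi_{a_t}\circ\psi_t$ and use Littlewood subordination plus uniform boundedness of $C_{\phi_a}$ for $|a|\le c<1$; this works, but note that the Carleson-measure characterization you gesture at in \cite{LiuRattya} is stated for $\om\in\DDD$, not for general $\om\in\DD$, so you should either prove the M\"obius bound directly (using $1-|\phi_a(z)|\asymp_c 1-|z|$ and $\whw(\phi_a(z))\asymp_c\whw(z)$ for $\om\in\DD$) or just cite the same PR2016 estimate the paper uses. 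Second, and more substantively, for the inclusion $\{f\in\A: Gf'\in\A\}\subseteq D(\G)$ the paper argues through the resolvent, writing this set as $R(\lambda,\G)(\A)\subseteq D(\G)$, while you prove it directly from the Berkson--Porta flow equation via the Bochner-integral identity $C_tf-f=\int_0^t C_s(Gf')\,ds$ and strong continuity; your route is more self-contained and makes the averaging mechanism explicit, at the cost of checking the (routine) interchange of point evaluations with the vector-valued integral. For strong continuity on polynomials the two arguments are essentially equivalent (dominated convergence on monomials in the paper versus your split at a radius $\rho$), and in part (c) you spell out, via the lower bound $\int_{1/2}^1 r^{np+1}\om(r)\,dr\gtrsim\whw\bigl(1-\tfrac1{np}\bigr)\gtrsim(np)^{-\beta}$ coming from the doubling condition, exactly the tail comparison that the paper compresses into the assertion $n^pM_p^p(\tfrac12,G)\lesssim1$; making that decay estimate explicit is a welcome addition rather than a deviation.
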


\begin{proof}
Since $\omega\in\DD$, the polynomials are dense in $A^p_{\omega}$. Therefore for any $f\in A^p_{\omega}$, there exists a sequence of polynomials $\{P_n\}$ such that $\lim_{n\rightarrow\infty}\|f-P_n\|_{A^p_{\omega}}=0$. It follows from triangle inequality that
\begin{align*}
\|C_tf-f\|_{A^p_{\omega}}&\leq\|C_tf-C_tP_n\|_{A^p_{\omega}}
+\|C_tP_n-P_n\|_{A^p_{\omega}}+\|f-P_n\|_{A^p_{\omega}}\\
&\leq(\|C_t\|+1)\|f-P_n\|_{A^p_{\omega}}+\|C_tP_n-P_n\|_{A^p_{\omega}}.
\end{align*}
According to \cite[Theorem 15 and (4.7)]{PR2016}, we obtain $\sup_{t\in[0,1]}\|C_t\|<\infty$. Therefore, to prove $\lim_{t\rightarrow0^+}\|C_tf-f\|_{A^p_{\omega}}=0$, it suffices to prove $\lim_{t\rightarrow0^+}\|C_tP-P\|_{A^p_{\omega}}=0$ for each polynomial $P$. Equivalently we only need to show that for each $n\geq0$ $\lim_{t\rightarrow0^+}\|(\varphi_t)^n-e_n\|_{A^p_{\omega}}=0$, where $e_n(z)=z^n$. For a fixed $n\geq0$, since $|\varphi_t(z)-z^n|^p\leq(|z|^n+1)^p\in L^1_\om$ and $\varphi^n_t(z)\to z^n$ as $t\to 0^+$ for each $z\in\D$, a direct application of the Lebesgue dominant convergence theorem gives the desired result. Thus $\Ct$ is strongly continuous on $A^p_{\omega}$.

By definition, the domain of $\G$ is
$$
D(\G)=\{f\in A^p_{\omega}: \lim_{t\rightarrow0^+}\frac{C_t(f)-f}{t} ~~\mbox{exists~~in}~A^p_{\omega}\}.
$$
Let now $D=\{f\in A^p_{\omega}: Gf'\in A^p_{\omega}\}$. We are going to show that if $f\in D(\G)$, then $Gf'\in A^p_{\omega}$. Indeed, if $f\in D(\G)$, then $\G(f)\in A^p_{\omega}$ and
$$
\lim_{t\rightarrow0^+}\left\|\frac{C_tf-f}{t}-\G(f)\right\|_{A^p_{\omega}}=0.
$$
Since for a fixed $0<r<1$ the well known inequality $M_{\infty}(r,f)\lesssim M_{p}(\frac{1+r}{2},f)(1-r)^{-\frac1p}$ yields
$$
M_{\infty}^p(r,f)\lesssim\frac{\|f\|^p_{A^p_{\omega}}}{(1-r)\widehat{\omega}(r)},
$$
convergence in the norm of $A^p_{\omega}$ implies the pointwise convergence. Therefore, by the definition of the infinitesimal generator, we deduce for every $z\in\D$,
\begin{align*}
\G f(z)&=\lim_{t\rightarrow0^+}\frac{f(\varphi_t(z))-f(z)}{t}=
\lim_{t\rightarrow0^+}\frac{f(\varphi_t(z))-f(\varphi_0(z)}{t}\\
&=\frac{\partial(f(\varphi_t(z)))}{\partial t}\bigg|_{t=0}=f'(\varphi_t(z))\frac{\partial\varphi_t(z)}{\partial t}\bigg|_{t=0}=G(z)f'(z).
\end{align*}
That is, $G(z)f'(z)=\G f(z)\in A^p_{\omega}$ and $D(\G)\subseteq D$. On the other hand, for $\lambda\in\rho(\G)$, the resolvent set of $\G$, we have
$$
D=\{f\in A^p_{\omega}: Gf'\in A^p_{\omega}\}=\{f\in A^p_{\omega}: Gf'-\lambda f\in A^p_{\omega}\}
=R(\lambda,\G)(A^p_\om),
$$
where $R(\lambda,\G)=(\lambda I-\G)^{-1}$ is the resolvent operator of $\G$. Since $R(\lambda,\G)(A^p_{\omega})\subseteq D(\G)$, $D\subseteq D(\G)$. Hence, $D=D(\G)$.

If $\Ct$ is uniformly continuous on $A^p_{\omega}$, then the infinitesimal generator $\G$ is bounded on $A^p_{\omega}$. To show $\at$ is trivial, it is equivalent to show $G\equiv0$. To the end, we may consider polynomials $e_n(z)=z^n$. Then $\G(e_n)=nGe_{n-1}$, and taking $n=1$ we see that $G\in A^p_{\omega}$. Since
$\G$ is bounded on $A^p_{\omega}$, for $n\geq1$, we have $\|\G e_n\|\lesssim\|e_n\|$, that is,
\begin{equation*}
n^p\int_0^1M^p_p(r,G)r^{(n-1)p+1}\omega(r)dr\lesssim \int_0^1r^{np+1}\omega(r)dr.
\end{equation*}
It follows that $n^pM_p^p(\frac12,G)\lesssim1$. Since $G\in A^p_{\omega}$, we have $M_p(\frac12,G)=0$. Thus $G\equiv0$. The proof is complete.
\end{proof}

\Prf Theorem ~\ref{theorem1}. (i) If each $\varphi_t$ is an automorphism of the disk, then the resolvent of $\Gamma$ is compact. To show this, we can choose a suitable automorphism to conjugate and hence assume without loss of generality that $b=0$. Then each $\varphi_t$ is the rotation of $\D$. Let $\varphi_t(z)=e^{iat}z$ for real $a\neq0$. Then $G(z)=-iaz$, and $\Gamma(f)=-ia f'(z)$. On one hand, since $(ia-\Gamma)(f)=g$ has the unique analytic solution
$$
f(z)=\frac{1}{i\alpha z}\int_0^zg(\zeta)d\zeta.
$$ 
and the operator
$$
g\to\frac{1}{ia z}\int_0^zg(\zeta)d\zeta
$$
is compact on $A^p_{\omega}$ by, for example, a similar proof of \cite[Theorem 6.4]{pel2016}, $ia$ belongs to the resolvent set of $\Gamma$ and the resolvent 
$$
(iaI-\Gamma)^{-1}(g)(z)=\frac{1}{ia z}\int_0^zg(\zeta)d\zeta,\quad g\in A^p_\om
$$
is compact on $A^p_\om$. On the other hand, the well-known resolvent equation (see for example \cite[p.157]{EN1})
$$
R(\lambda,\G)-R(\mu,\G)=(\mu-\lambda)R(\lambda,\G)R(\mu,\G) \quad \lambda,\mu\in\rho(\G)
$$
shows that $R(\lambda,\G)$ is compact on $A^p_{\omega}$ for all $\lambda\in\rho(\G)$ if and only if it is compact for a certain $\lambda_0\in\rho(\G)$. Hence the spectrum of $\Gamma$ contains only the point spectrum by \cite[Corollary 1.15, p.162]{EN1}. (We underline here that a general criterion of compact resolvent is provided in Corollary~\ref{coroally}.)

Now, differentiating both sides of \eqref{koenigs} with respect to $t$ and letting $t=0$, we get that $G(z)=G'(b)\frac{h(z)}{h'(z)}$. On one hand,  suppose $f\in \H(\D)$ and $\lambda\neq0$ such that $\G f=\lambda f$. Then \eqref{Gamma} yields
$$
\frac{f'}{f}=\lambda\frac{h'}{G'(b)h}.
$$
Choosing $r$ such that $|b|<r<1$ and $f$ has no zeros on $|z|=r$, we deduce
$$
\frac1{2\pi i}\int_{|z|=r}\frac{f'(z)}{f(z)}dz=\frac{\lambda}{G'(b)}\frac1{2\pi i}\int_{|z|=r}\frac{h'(z)}{h(z)}\,dz.
$$
This together with the argument principle yields $\frac{\lambda}{G'(b)}=k$, where $k$ is any nonnegative integer. On the other hand, notice that the differential equation
$f'(z)=k\frac{h'(z)}{h(z)}f(z)$
has the solution $f(z)=ch^k$, $c\neq0$. This finishes the proof of (i).
\vskip3mm
(ii) The proof is pretty straightforward. Since we hypothesize that $\Ct$ is eventually norm continuous on $A^p_\om$, the (SMT) holds naturally. Therefore, by Theorem \ref{spectrum}, we get for any $t>0$
$$
\{e^{t\lambda}: \lambda\in\sigma(\G)\}=\{\lambda\in\C:|\lambda|\leq r_{e, A^p_\om}(C_{\varphi_{t}})\}\cup\{(\varphi_t'(b))^k\}:k=0,1,2,3\ldots\}\setminus\{0\}.
$$
The next step is taking the derivative of both sides of \eqref{koenigs} with respect to $z$ and taking $z=b$, we obtain 
$$
\varphi_t'(b)=e^{G'(b)t},\quad t>0.
$$
This together with the fact that point $0\in\sigma_{A^p_{\om}}(C_{\varphi_t})$ for any $t>0$, we can get the desired result.

\vskip1cm
{\Prf Proposition~\ref{proo}. By \cite[Lemma 4]{Be2}, for any $\varepsilon>0$, there exists a constant $C_\varepsilon$ such that 
$$
M_{\infty}(r,f)\leq\frac{C_\varepsilon}{(1-r)^{(\eta+\varepsilon)\cos^2\alpha/\pi}},\quad 0<r<1.
$$
Therefore, 
\begin{align*}
\int_0^1 M^p_\infty(r,f)\int_r^1\om(t)t\,dt\,dr&\asymp \int_0^1 M^p_\infty(r,f)\whw(r)\,dr \\
&\lesssim C_\varepsilon\int_0^1\frac{\whw(r)}{(1-r)^{p(\eta+\varepsilon)\cos^2\alpha/\pi}}\,dr.
\end{align*}
This together with \eqref{univalent} implies that $f\in A^p_\om$ if $\eta=0$ and $\varepsilon$ is small enough. If $\eta>0$, since the hypothesis \eqref{condition} implies that there exists a sufficiently small $\varepsilon$ such that 
$$
\int_0^1\frac{\whw(r)}{(1-r)^{p(\eta+\varepsilon)\cos^2\alpha/\pi}}\,dr<\infty,
$$
then the same reasoning as the above shows $f\in A^p_\om$. Conversely, if $f\in A^p_\om$, then \cite[Lemma 3]{Be2} and \eqref{univalent} yield \eqref{condition} directly. The proof is complete. 

\vskip0.5cm
\Prf Corollary~\ref{theorem+}. Since $h$ is $\mu$-spiral-like function,  the proof is straightforward by applying Theorem~\ref{theorem1} and Proposition \ref{proo}.
}
\section{Eventually norm continuous composition semigroup}\label{sec3}

The proof of Theorem \ref{theorem2} does essentially rely on the result of characterizing bounded difference of composition operators on weighted Bergman spaces. In general, we have the result for the case that $\om\in\DDD$. The proof of the sufficiency is basically similar to the proof of \cite[Proposition 4]{LRW}, while the necessity should be modified since the original proof shown in \cite[Proposition 6]{LRW} used a wrong estimate \cite[p.795]{SE}, see \cite{LSS} for their corrections.  

The Carleson square $S(I)$ based on an interval $I \subset \mathbb{T}$ is the set $S(I)=\left\{r e^{i t} \in\right.$ $\left.\mathbb{D}: e^{i t} \in I, 1-|I| \leq r<1\right\}$, where $|I|$ denotes the Lebesgue measure of $I \subset \mathbb{T}$. We associate to each $a \in \mathbb{D} \backslash\{0\}$ the interval $I_a=\left\{e^{i \theta}:\left|\arg \left(a e^{-i \theta}\right)\right| \leq \frac{1-|a|}{2}\right\}$, and denote $S(a)=S\left(I_a\right)$.

To prove the necessity, we need the following result, whose proof can be found in \cite[Theorem 2.8]{KooWang}.

\begin{lemma}\label{xxxJ}
    Suppose $s>1$ and $0<r_0<1$. Then there exist $N=N\left(r_0\right)>1$ and $C=C\left(s, r_0\right)>0$ such that
$$
\begin{aligned}
& \left|\frac{1}{(1-\bar{a} z)^s}-\frac{1}{(1-\bar{a} w)^s}\right|+\left|\frac{1}{\left(1-t_N \bar{a} z\right)^s}-\frac{1}{\left(1-t_N \bar{a} w\right)^s}\right| \\
& \quad \geq C \delta(z, w)\left|\frac{1}{(1-\bar{a} z)^s}\right|,
\end{aligned}
$$
for all $z \in \Delta\left(a, r_0\right)$ with $1-|a|<1 /(2 N), t_N=1-N(1-|a|)$ and $w \in \mathbb{D}$.
\end{lemma}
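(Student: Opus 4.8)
The plan is to strip off the common factor $|1-\bar a z|^{-s}$ and reduce the assertion to an elementary inequality for the two ratios
$$
u=\frac{1-\bar a z}{1-\bar a w},\qquad v=\frac{1-t_{N}\bar a z}{1-t_{N}\bar a w},
$$
after which a short case analysis finishes the job, the second function being genuinely needed only in the exceptional regime where $u^{s}$ is close to $1$ while $u$ itself stays away from $1$. All powers are taken with the principal branch, which is legitimate since $\operatorname{Re}(1-\bar a\zeta)>1-|a|>0$ for every $\zeta\in\D$, so all of $1-\bar a z$, $1-\bar a w$, $1-t_{N}\bar a z$, $1-t_{N}\bar a w$ lie in the open right half-plane.

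First I would normalize. From $z\in\Delta(a,r_{0})$ together with $1-|a|<1/(2N)$ (hence $|a|>\tfrac12$) one gets the usual comparisons $|1-\bar a z|\asymp 1-|a|$, $|z-a|\lesssim 1-|a|$ and therefore $|1-\bar z w|\asymp|1-\bar a w|$; moreover, once $N$ exceeds some $N_{0}(r_{0})$, the summand $(1-t_{N})\bar a z=N(1-|a|)\bar a z$ dominates $1-\bar a z$, so that $|1-t_{N}\bar a z|\asymp N(1-|a|)$ and $|1-\bar a z|^{s}/|1-t_{N}\bar a z|^{s}\asymp N^{-s}$. Since $1-u=\bar a(z-w)/(1-\bar a w)$, this gives $\delta(z,w)=|z-w|/|1-\bar z w|\asymp|z-w|/|1-\bar a w|=|1-u|/|a|\asymp|1-u|$, and also $|u|\le C(r_{0})$ because $|1-\bar a w|\ge 1-|a|$. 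Dividing the asserted inequality by $|1-\bar a z|^{-s}$ therefore reduces it to
$$
|1-u^{s}|+N^{-s}\,|1-v^{s}|\ \gtrsim\ |1-u|,
$$
with the implied constant allowed to depend on $s$ and $r_{0}$.

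Next the case split. If $|u|\le\tfrac12$, then $|1-u^{s}|\ge 1-|u|^{s}\ge 1-2^{-s}\ge\tfrac12\ge\tfrac12\,\delta(z,w)$. If $\tfrac12\le|u|\le C(r_{0})$ and $|1-u|\le\varepsilon_{0}(s)$ with $\varepsilon_{0}$ small, then $u$ lies in a small disc about $1$ on which $u\mapsto u^{s}$ is analytic with non-vanishing derivative, whence $|1-u^{s}|\asymp|1-u|$. In the remaining case $\tfrac12\le|u|\le C(r_{0})$ and $|1-u|\ge\varepsilon_{0}(s)$: if $|1-u^{s}|$ is bounded below by a constant $c_{0}(s,r_{0})$ we are done, so one may assume $|1-u^{s}|<c_{0}$, which forces $u$ to be close to one of the finitely many \emph{nontrivial} $s$-th roots of unity that lie in the relevant region. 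Here the push-back must be exploited: $z$ is essentially at $a$, and the constraint $|u|\asymp 1$ confines $w$ to $|1-\bar a w|\asymp 1-|a|$; from $v-1=-(u-1)\,t_{N}(1-\bar a w)/(1-t_{N}\bar a w)$ one reads off that $|v-1|\asymp|1-u|/N\asymp 1/N$ and, tracking also the \emph{argument} of $v-1$ through the arguments of $u-1$, $1-\bar a w$ and $1-t_{N}\bar a w$, that $v^{s}$ stays bounded away from $1$; hence $N^{-s}|1-v^{s}|\gtrsim C(s,r_{0})|1-u|\asymp C(s,r_{0})\,\delta(z,w)$. Combining the three cases proves the lemma.

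I expect the last case to be the only real obstacle. One has to check carefully that the hypotheses $z\in\Delta(a,r_{0})$ and $1-|a|<1/(2N)$ genuinely pin $w$ into $|1-\bar a w|\asymp 1-|a|$ once $|u|\asymp 1$, and — the truly delicate point — that when $u$ hugs a nontrivial $s$-th root of unity the perturbed ratio $v$ does not simultaneously hug one, so that the intrinsically smaller $N^{-s}$-weighted term is not swallowed by the error term coming from $1-u^{s}$. This is precisely where the specific form $t_{N}=1-N(1-|a|)$, and the freedom to take $N=N(r_{0})$ large, are used; the unavoidable cost is that the final constant degrades roughly like $N(r_{0})^{-(s+1)}$, which matches the stated dependence $C=C(s,r_{0})$.
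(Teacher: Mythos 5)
The paper itself does not prove this lemma --- it is imported verbatim from Koo--Wang \cite[Theorem 2.8]{KooWang} --- so your attempt can only be judged on its own terms. Your reduction is the natural one and is correct as far as it goes: since $1-\bar a z$, $1-\bar a w$, $1-t_N\bar a z$, $1-t_N\bar a w$ all lie in the right half-plane, dividing by $|1-\bar a z|^{-s}$ turns the claim into $|1-u^{s}|+c(r_0)^{s}N^{-s}|1-v^{s}|\gtrsim|1-u|\asymp\delta(z,w)$, and your cases $|u|\le\tfrac12$ and $|u-1|\le\varepsilon_0(s)$ are fine (minor slip: the identity is $1-v=(1-u)\,t_N(1-\bar a w)/(1-t_N\bar a w)$, without the extra minus sign).

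The gap is exactly in the remaining case, which is the whole content of the lemma. There you only \emph{assert}, ``tracking the argument of $v-1$'', that $v^{s}$ stays bounded away from $1$; no argument is given, and with $N=N(r_0)$ independent of $s$ (as you, following the statement, insist) the assertion is not provable: for $s$ much larger than $N$ both terms can be made to vanish simultaneously. Concretely, take $a\in(0,1)$ with $1-a<1/(2N)$, and set $A=1-az=re^{i\theta/2}$, $B=1-aw=re^{-i\theta/2}$ with $\theta=2\pi k/s$ for an integer $k\asymp N$. Then $u=A/B=e^{i\theta}$, so $u^{s}=1$, while $v=\bigl(N(1-a)+t_NA\bigr)/\bigl(N(1-a)+t_NB\bigr)$ is unimodular with argument $2\arctan\frac{t_Nr\sin(\theta/2)}{N(1-a)+t_Nr\cos(\theta/2)}$, which increases continuously in $r$ from $0$ to $\theta$; solving for the value $2\pi/s$ gives $r\approx N(1-a)/k$, which (for a suitable integer $k$, roughly in $(N/\kappa_2,N/\kappa_1)$) is an admissible modulus for $z\in\Delta(a,r_0)$ and $w\in\mathbb{D}$. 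Then $v^{s}=1$ as well, both differences in the lemma are exactly zero, yet $\delta(z,w)\asymp k/s>0$. So no amount of argument tracking can close this case uniformly in $s$ with $N=N(r_0)$; the statement is quantified with $s$ given first, and the honest repair is to let $N$ depend on $s$ too: choosing $N\ge C(r_0)\,s$ forces $|1-v|\le c/s$, whence $|1-v^{s}|\asymp s|1-v|\asymp (s/N)|1-u|$ and the inequality follows with $C\asymp c(r_0)^{s}sN^{-s-1}$. This is all the application in Proposition \ref{difference} needs (there $s=(\gamma+1)/p$ is fixed, and an $N$ depending on it is harmless), but as written your case 3 is an unproved claim, and even in the regime $s\lesssim N$ the phrase ``$v^{s}$ stays bounded away from $1$'' is not what is true or required: $|1-v^{s}|\asymp s/N$ is small, and only after multiplication by $N^{-s}$ does it produce the constant $C(s,r_0)$.
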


\begin{proposition}\label{difference}
    Let $0<p<\infty$, $\om\in\DDD$, and let $\nu$ be a positive Borel measure on $\D$. Suppose $\varphi$ and $\psi$ are analytic self-maps of $\D$. Let $\sigma(z)=\delta(\varphi(z),\psi(z))$. Then the difference operator $C_\varphi-C_\psi: A^p_\om\to L^p_\nu$ is bounded if and only if both weighted composition operators $\sigma C_\varphi$ and $\sigma C_\psi$ are bounded from $A^p_\om$ to $L^p_\nu$. Moreover, there exists a $\gamma_0=\gamma_0(\om)$ such that for all $\gamma>\gamma_0$
\begin{equation}
\|C_\varphi-C_\psi\|^p\asymp\sup_{a\in\D}\frac{(1-|a|)^{\gamma}}{\whw(a)}\int_{\D}\left(\f{\sigma(z)^p}{|1-\bar{a}\varphi(z)|^{\gamma+1}}+\f{\sigma(z)^p}{|1-\bar{a}\psi(z)|^{\gamma+1}}\right)\om(z)\,dA(z).
\end{equation}
\end{proposition}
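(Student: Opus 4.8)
The plan is to prove the equivalence and the norm estimate by going through the two directions separately, following the scheme of \cite{LRW} but replacing the flawed step in the necessity argument with an application of Lemma~\ref{xxxJ}. Throughout, write $\sigma(z)=\delta(\varphi(z),\psi(z))$ and, for $a\in\D$ and large $\gamma$, let $f_{a}(z)=\left(\frac{1-|a|^2}{1-\bar a z}\right)^{(\gamma+1)/p}$ be the standard test functions, which satisfy $\|f_a\|_{\A}^p\asymp\whw(a)(1-|a|)$ for $\gamma>\gamma_0(\om)$. I will also use the fact that $\om\in\DDD$ so that the Carleson-measure characterization \eqref{Carlson} is available, i.e. a positive measure $\mu$ is a $p$-Carleson measure for $\A$ iff $\sup_{z\in\D}\mu(\Delta(z,r))/\om(\Delta(z,r))<\infty$ for some (equivalently all) $r\in(0,1)$, and that $\om(\Delta(a,r))\asymp\whw(a)(1-|a|)$.

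\medskip

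\emph{Sufficiency.} Assume $\sigma C_\varphi$ and $\sigma C_\psi$ are bounded from $\A$ to $L^p_\nu$; equivalently, by the Carleson condition applied to the pullback measures, the quantities $\sup_a\frac{(1-|a|)^\gamma}{\whw(a)}\int_\D\frac{\sigma(z)^p}{|1-\bar a\varphi(z)|^{\gamma+1}}\om(z)\,dA(z)$ and the analogous one with $\psi$ are finite; call their sum $A^p$. For $f\in\A$, I would split the disk into the ``close'' region $\Omega=\{z:\sigma(z)\le 1/2\}$ and its complement. On $\D\setminus\Omega$ one has $|f(\varphi(z))-f(\psi(z))|^p\lesssim |f(\varphi(z))|^p+|f(\psi(z))|^p\lesssim (\sigma(z)^p|f(\varphi(z))|^p+\sigma(z)^p|f(\psi(z))|^p)$, so this part is controlled by the two weighted composition operators directly. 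On $\Omega$, I use the standard pseudohyperbolic-distance estimate for functions in $\A$: writing $w=\varphi(z)$, $w'=\psi(z)$ with $\delta(w,w')=\sigma(z)$ small, a subharmonicity/mean-value argument on the pseudohyperbolic disk $\Delta(w,1/2)$ together with \eqref{eqx1} gives
\begin{equation*}
|f(w)-f(w')|^p\lesssim \sigma(z)^p\,\frac{1}{\om(\Delta(w,r))}\int_{\Delta(w,r)}|f(\zeta)|^p\om(\zeta)\,dA(\zeta)
\end{equation*}
for a suitable $r$. Integrating against $d\nu$ and using that $\Delta(w,r)$ has $\om$-mass comparable to $\whw(w)(1-|w|)\asymp$ the $\A$-norm of a test function, one bounds $\int_\Omega|f\circ\varphi-f\circ\psi|^p\,d\nu$ by $A^p\|f\|_{\A}^p$ via a Fubini argument exactly as in \cite[Proposition 4]{LRW}. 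This yields $\|C_\varphi-C_\psi\|^p\lesssim A^p$.

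\medskip

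\emph{Necessity and lower bound.} Conversely, suppose $C_\varphi-C_\psi:\A\to L^p_\nu$ is bounded. Fix $a\in\D$ with $1-|a|$ small and set $t=t_N=1-N(1-|a|)$ with $N=N(r_0)$ from Lemma~\ref{xxxJ}, where $r_0\in(0,1)$ is the Carleson radius associated to $\om$. Test against $f_a$ and against the ``shifted'' function $g_a(z)=\left(\frac{1-|ta|^2}{1-t\bar a z}\right)^{(\gamma+1)/p}$ (both with $\A$-norm comparable to $\whw(a)(1-|a|)$, using that $1-|ta|\asymp 1-|a|$ and $\whw(ta)\asymp\whw(a)$ since $\om\in\Dd$). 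Applying $s=(\gamma+1)/p$ in Lemma~\ref{xxxJ} with $z=\varphi(z)$, $w=\psi(z)$ on the set where $\varphi(z)\in\Delta(a,r_0)$ gives the pointwise bound
\begin{equation*}
|f_a(\varphi(z))-f_a(\psi(z))|^p+|g_a(\varphi(z))-g_a(\psi(z))|^p\gtrsim \sigma(z)^p\,(1-|a|^2)^{\gamma+1}\,\frac{1}{|1-\bar a\varphi(z)|^{\gamma+1}},
\end{equation*}
so that
\begin{equation*}
\big(\|C_\varphi-C_\psi\|^p(\|f_a\|^p+\|g_a\|^p)\big)\gtrsim (1-|a|)^{\gamma+1}\int_{\varphi^{-1}(\Delta(a,r_0))}\frac{\sigma(z)^p}{|1-\bar a\varphi(z)|^{\gamma+1}}\,\om(z)\,dA(z).
\end{equation*}
Dividing by $\|f_a\|^p\asymp\whw(a)(1-|a|)$ gives the desired lower bound for the $\varphi$-part restricted to $\varphi(z)\in\Delta(a,r_0)$; the contribution of $\varphi(z)\notin\Delta(a,r_0)$ is handled because there $(1-|a|)/|1-\bar a\varphi(z)|$ is bounded, reducing to a Carleson-measure estimate for a single composition operator which is controlled by $\sigma C_\varphi$, whose boundedness follows once we know $C_\varphi-C_\psi$ is bounded together with the close/far splitting. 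Finally, to get boundedness of $\sigma C_\varphi$ and $\sigma C_\psi$ individually (not just their ``symmetrized'' sum), I note that on the far set $\{\sigma\ge 1/2\}$ the operator $\sigma C_\varphi$ acts like $C_\varphi-C_\psi$ up to constants, and on the close set one transfers between $\varphi$ and $\psi$ using $|1-\bar a\varphi(z)|\asymp|1-\bar a\psi(z)|$ when $\delta(\varphi(z),\psi(z))\le 1/2$. Taking the supremum over $a$ completes both the equivalence and the two-sided estimate.

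\medskip

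The main obstacle is the necessity direction: the naive choice of a single test function $f_a$ does not detect $\sigma(z)^p/|1-\bar a\varphi(z)|^{\gamma+1}$ because $f_a\circ\varphi$ and $f_a\circ\psi$ can nearly cancel, and this is precisely where the original argument in \cite{LRW} used an incorrect estimate. The fix is to use the \emph{pair} $(f_a,g_a)$ with the dilation parameter $t_N$ calibrated to $a$ as in Lemma~\ref{xxxJ}, which guarantees that the two differences cannot simultaneously be small; making the auxiliary norm $\|g_a\|_{\A}$ comparable to $\|f_a\|_{\A}$ crucially uses $\om\in\Dd$ (not merely $\om\in\DD$), which is exactly why the hypothesis is strengthened to $\om\in\DDD$. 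A secondary technical point is keeping track of the integration domains $\varphi^{-1}(\Delta(a,r_0))$ versus the whole disk and verifying that the ``far'' pieces do not contribute anything beyond what is already controlled, which is routine given \eqref{Carlson} and \eqref{eqx1}.
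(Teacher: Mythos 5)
Your argument is essentially the paper's own proof: the sufficiency uses the same close/far splitting and Fubini--Carleson scheme following \cite[Proposition 4]{LRW}, and the necessity uses exactly the paper's fix, testing against the pair $f_a$ and the $t_N$-dilated kernel and invoking Lemma~\ref{xxxJ} to rule out cancellation, with $\om\in\DDD$ entering through the Carleson characterization \eqref{Carlson}. Two small corrections: the far-region remark at the end of your necessity argument is unnecessary and, as phrased, circular --- once you have the lower bound over $\varphi^{-1}(\Delta(a,r_0))$ (and the elementary estimate for $|a|$ bounded away from $1$, obtained by testing against the identity function), the pseudohyperbolic-disc Carleson condition \eqref{Carlson}, valid since $\om\in\DDD$ and giving $\om(S(a))\asymp\om(\Delta(a,r))$, together with the norm identity \eqref{norm} already upgrades this local estimate to the global two-sided bound, which is precisely how the paper concludes; moreover the comparability $\|g_a\|_{\A}\asymp\|f_a\|_{\A}$ needs only $\om\in\DD$ (iterate the doubling condition), so the genuine use of $\Dd$ is in \eqref{Carlson}, not in the normalization of the auxiliary test function.
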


\begin{proof}
Suppose $\sigma C_\varphi$ and $\sigma C_\psi$ are bounded. For a fixed $0<r<1$, let $E=\{z\in\D: \sigma(z)>r\}$. Then 
\begin{equation}\label{1}
    \begin{split}
        \|C_\varphi(f)-C_\psi(f)\|^p_{L^p_\nu}&=\left(\int_E+\int_{\D\setminus E}\right)|f(\varphi(z))-f(\psi(z))|^p\nu(z)\,dA(z)\\
       &\leq \frac{1}{r}\int_E \sigma(z)|f(\varphi(z))-f(\psi(z))|^p\nu(z)\,dA(z)+I\\
       &\leq\frac1r(\|\sigma C_\varphi f\|^p_{L^p_\nu}+\|\sigma C_\psi f\|^p_{L^p_\nu})+I,\quad f\in A^p_\om,
    \end{split}
\end{equation}
where $I=\int_{\D\setminus 
 E}|f(\varphi(z))-f(\psi(z))|^p\nu(z)\,dA(z)$. 
To estimate $I$, for a measurable function $h$ and an analytic self-map $\varphi$, we define the pull-back measure $\varphi_*(h)(M)=\int_{\varphi^{-1}(M)}h\,dA$
for each measurable set $M\subseteq\D$. Now, using the \cite[Lemma 1]{LiuRattya} and Fubini's theorem, we deduce that for any $r<R<1$
\begin{equation}\label{2}
    \begin{split}
    I&=\int_{\D\setminus 
 E}|f(\varphi(z))-f(\psi(z))|^p\nu(z)\,dA(z)\\
 &\lesssim \int_{\D\setminus 
 E}\frac{\sigma(z)^p}{(1-|\varphi(z)|)^2}\int_{\Delta(\varphi(z),R)}|f(\z)|^p\,dA(\z)\nu(z)\,dA(z)\\
 &\leq \int_{\D}\frac{|f(\z)|^p}{(1-|\z|)^2}\int_{\varphi^{-1}(\Delta(\z,R))}\sigma(z)^p\nu(z)\,dA(z)\,dA(\z)\\
 &=\int_\D|f(\z)|^p\frac{\varphi_*(\sigma^p\nu)(\Delta(\z,R))}{(1-|\z|)^2}\,dA(\z).
    \end{split}
\end{equation}
By the definition of the pull-back measure, we see that $\|\sigma C_\varphi f\|_{L^p_\nu}=\|f\|_{L^p_{\varphi_*(\sigma^p\nu)}}$. Therefore, for any $a\in\D$, it follows from the hypothesis $\om\in\DDD\subseteq\DD$ that 
\begin{equation}\label{3}
    \int_{S(a)}\frac{\varphi_*(\sigma^p\nu)(\Delta(\z,R))}{(1-|\z|)^2}\,dA(\z)\lesssim \varphi_*(\sigma^p\nu)(S(a))\lesssim\om(S(a)),\quad |a|>R.
\end{equation}
Combining \eqref{2}, \eqref{3} and the Carleson measure characterization for $A^p_\om$ \cite[Theorem 3.3]{pel2016}, we get 
\begin{equation*}
    I\lesssim \sup_{a\in\D}\frac{\varphi_*(\sigma^p\nu)(S(a))}{\om(S(a))} {\|f\|^p_{A^p_\om}}.
\end{equation*}
Since $\sigma C_\varphi$ and $\sigma C_\psi$ are bounded, both $\varphi_*(\sigma^p\nu)$ and $\psi_*(\sigma^p\nu)$ are $p$-Carleson measures for $A^p_\om$. \eqref{1}, the above estimate and \cite[Theorem 3.3]{pel2016} yield
\begin{equation}\label{x}
  \|C_\varphi-C_\psi\|^p\lesssim  \sup_{a\in\D}\frac{\varphi_*(\sigma^p\nu)(S(a))}{\om(S(a))}+\sup_{a\in\D}\frac{\psi_*(\sigma^p\nu)(S(a))}{\om(S(a))}.
\end{equation}
Then the upper bound can be acquired from \eqref{norm}, which will be proved a bit later.

\vskip3mm
Conversely, suppose $C_\varphi-C_\psi$ is bounded. For any $a\in\D$, consider the function 
$$
f_a(z)=\left(\frac{1-|a|}{1-\overline{a}z}\right)^{\frac{\gamma+1}{p}}\om(S(a))^{-\frac{1}{p}},\quad z\in\D.
$$
It follows from \cite[Lemma 2.1]{pel2016} that there exists $\gamma_0=\gamma_0(\om)$ such that for all $\gamma>\gamma_0$, $\|f_a\|_{A^p_\om}\asymp1$. 
A standard calculation based on \cite[Theorem 3.3]{pel2016} shows that if a positive measure $\mu$ is a p-Carleson measure for $A^p_\om$, then for those $\gamma$ the identity operator $Id: A^p_\om\to L^p_\mu$ satisfies 
\begin{equation}\label{norm}
  \|I_d\|^p_{A^p_\om\to L^p_\mu} \asymp\sup_{a\in\D}\|f_a\|^p_{L^p_\mu}.
\end{equation}
Now, fix a $\gamma>\gamma_0$ and an $r\in(0,1)$.  Then choose such $N=N(r)$ and $t_N$ appearing in Lemma ~\ref{xxxJ}, we have $\|f_{t_{N}a}\|\asymp1$. When $|a|>1-\frac{1}{2N}$, the hypothesis and Lemma \ref{xxxJ} yield
\begin{equation}\label{x1}
    \begin{split}
&\quad\|C_\varphi-C_\psi\|^p\gtrsim\|(C_\varphi-C_\psi)f_a\|_{L^p_\nu}^p+\|(C_\varphi-C_\psi)f_{t_{N}a}\|_{L^p_\nu}^p\\
&=\int_\D\left(\left|\left(\frac{1-|a|}{1-\overline{a}\varphi(z)}\right)^{\frac{\gamma+1}{p}}-\left(\frac{1-|a|}{1-\overline{a}\psi(z)}\right)^{\frac{\gamma+1}{p}}\right|^p\right.\\
&\quad\left.+\left|\left(\frac{1-|a|}{1-t_N\overline{a}\varphi(z)}\right)^{\frac{\gamma+1}{p}}-\left(\frac{1-|a|}{1-t_N\overline{a}\psi(z)}\right)^{\frac{\gamma+1}{p}}\right|^p\right)\frac{\nu(z)}{\omega(S(a))}\,dA(z)\\
&\gtrsim\int_{\varphi^{-1}(\Delta(a,r))}\left(\left|\left(\frac{1-|a|}{1-\overline{a}\varphi(z)}\right)^{\frac{\gamma+1}{p}}-\left(\frac{1-|a|}{1-\overline{a}\psi(z)}\right)^{\frac{\gamma+1}{p}}\right|\right.\\
&\quad\left.+\left|\left(\frac{1-|a|}{1-t_N\overline{a}\varphi(z)}\right)^{\frac{\gamma+1}{p}}-\left(\frac{1-|a|}{1-t_N\overline{a}\psi(z)}\right)^{\frac{\gamma+1}{p}}\right|\right)^p\frac{\nu(z)}{\omega(S(a))}\,dA(z)\\
&\gtrsim \frac{(1-|a|)^{\gamma+1}}{\om(S(a))}\int_{\varphi^{-1}(\Delta(a,r))}\frac{\sigma(z)^p}{|1-\overline{a}\varphi(z)|^{\gamma+1}}\nu(z)\,dA(z)\asymp\frac{\varphi_*(\sigma^p\nu)(\Delta(a,r))}{\om(S(a))}.
\end{split}
\end{equation}
The same step shows that for $\psi$, we have 
\begin{equation}\label{x2}
   \|C_\varphi-C_\psi\|^p\gtrsim \frac{\psi_*(\sigma^p\nu)(\Delta(a,r))}{\om(S(a))}.
\end{equation}
When $|a|\leq 1-\frac{1}{2N}$, then first $\om(S(a))$ is uniformly bounded and 
$$
\sigma(z)=\frac{|\varphi(z)-\psi(z)|}{|1-\overline{\varphi(z)}\psi(z)|}\lesssim |\varphi(z)-\psi(z)|,\quad \varphi(z)\in\Delta(a,r)~\text{or}~\psi(z)\in\Delta(a,r).
$$
Therefore, we deduce 
\begin{equation}\label{x3}
\begin{split}
    \frac{\varphi_*(\sigma^p\nu)(\Delta(a,r))}{\om(S(a))}&=\frac{1}{\om(S(a))}\int_{\varphi^{-1}(\Delta(a,r))}\left|\frac{\varphi(z)-\psi(z)}{1-\overline{\varphi(z)}\psi(z)}\right|^p\nu(z)\,dA(z)\\
    &\lesssim\int_\D|\varphi(z)-\psi(z)|^p\nu(z)\,dA(z)\lesssim\|C_\varphi-C_\psi\|^p.
    \end{split}
\end{equation}
Similarly,
\begin{equation}\label{x4}
  \frac{\psi_*(\sigma^p\nu)(\Delta(a,r))}{\om(S(a))}\lesssim\|C_\varphi-C_\psi\|^p.  
\end{equation}
Combining \eqref{x1}, \eqref{x2}, \eqref{x3},  and \eqref{x4}, we see that for any $a\in\D$, 
\begin{equation}\label{xxx}
\|C_\varphi-C_\psi\|^p\gtrsim \frac{\varphi_*(\sigma^p\nu)(\Delta(a,r))}{\om(S(a))}+\frac{\psi_*(\sigma^p\nu)(\Delta(a,r))}{\om(S(a))}.
\end{equation}
Now, since $\om\in\DDD$, we see that there exist $r=r(\om)$ such that for any $z\in\D$, $\om(S(z))\asymp\om(\Delta(z,r))$. This together with \eqref{Carlson} yields a positive $\mu$ is a Carleson measure for $A^p_\om$ if and only if 
$$
\sup_{z\in\D}\frac{\mu(\Delta(z,r))}{\om(\Delta(z,r))}<\infty
$$
and moreover 
$$
\|Id\|^p_{A^p_\om\to L^p_\mu}\asymp\sup_{z\in\D}\frac{\mu(\Delta(z,r))}{\om(\Delta(z,r))}, 
$$
which combined with \eqref{xxx} and \eqref{norm} complete the proof.
\end{proof}

\Prf~Theorem~\ref{theorem2}. By the definition of the eventual continuity and a direct application of Proposition \ref{difference}, we are in a position to get the result. \hfill$\Box$

\vskip 3mm
We underline here that the proof of the sufficiency only requires the hypothesis that $\om\in\DD$ instead of $\om\in\DDD$, which means that any semigroup $\at$ satisfying \eqref{normcontinuous} induces an eventually continuous composition semigroup on $A^p_\om$, provided $\om\in\DD$. This fact guarantees the existence of such $\at$ and the amount is huge.  

\begin{corollary}
    Let $1\leq p<\infty$, $\omega\in\DD$. Suppose $\at$ is a semigroup of analytic self-maps of $\D$ and $\Ct$ is the corresponding composition semigroup on $A^p_\om$. If there exists a $t_0$ such that $\|\varphi_{t_0}\|_{H^\infty}<1$, then $\Ct$ is eventually norm continuous on $A^p_\om$. 
\end{corollary}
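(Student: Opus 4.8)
The plan is to deduce the statement from Theorem~\ref{theorem2}. Since, as noted after its proof, the sufficiency direction of that theorem requires only $\om\in\DD$, it suffices to produce $\gamma_0=\gamma_0(\om)>0$ and $t_0>0$ (the given $t_0$ will do) so that for every $\gamma>\gamma_0$ the limit in \eqref{normcontinuous} vanishes. Set $R=\|\varphi_{t_0}\|_{H^\infty}<1$. The first step I would record is a uniform $H^\infty$-bound along the tail of the semigroup: by the semigroup law and the commutativity $\varphi_{t_0}\circ\varphi_{t-t_0}=\varphi_t$, for all $t\ge t_0$ we have
$$
|\varphi_t(z)|=|\varphi_{t_0}(\varphi_{t-t_0}(z))|\le\sup_{w\in\D}|\varphi_{t_0}(w)|=R,\qquad z\in\D,
$$
so $\|\varphi_t\|_{H^\infty}\le R$ whenever $t\ge t_0$. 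It is essential here that $\varphi_{t_0}$ is placed on the outside.

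Next, fix $\gamma>\gamma_0$ (with $\gamma_0$ chosen below), $t,s>t_0$, and $a\in\D$. Because $|\varphi_t(z)|,|\varphi_s(z)|\le R$ we get $|1-\bar a\varphi_t(z)|\ge1-R$, $|1-\bar a\varphi_s(z)|\ge1-R$, and $\delta(\varphi_t(z),\varphi_s(z))\le(1-R^2)^{-1}|\varphi_t(z)-\varphi_s(z)|$. For the prefactor, recall that $\om\in\DD$ forces the tail integral to decay at most polynomially, $\whw(a)\gtrsim(1-|a|)^{\alpha}$ for some $\alpha=\alpha(\om)>0$ (obtained by iterating $\whw(r)\le C\whw(\tfrac{1+r}{2})$ starting from $r=0$); taking $\gamma_0=\alpha(\om)$ then gives $\sup_{a\in\D}\tfrac{(1-|a|)^\gamma}{\whw(a)}<\infty$. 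Substituting these three bounds into \eqref{normcontinuous}, the quantity inside the limit is at most a constant (depending on $\om,R,\gamma,p$ but not on $t,s$) times
$$
\int_\D|\varphi_t(z)-\varphi_s(z)|^p\om(z)\,dA(z)=\|\varphi_t-\varphi_s\|^p_{\A}.
$$
(Alternatively one may invoke \eqref{norm} to rewrite the left side of \eqref{normcontinuous} as $\asymp\|\sigma C_{\varphi_t}\|^p+\|\sigma C_{\varphi_s}\|^p$ with $\sigma=\delta(\varphi_t(\cdot),\varphi_s(\cdot))$, then bound each term the same way through the growth estimate \eqref{eqx1}.)

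It thus remains to verify $\|\varphi_t-\varphi_s\|_{\A}\to0$ as $s\to t$ for each fixed $t>t_0$. Assume $s<t$ (the case $s>t$ is symmetric), and write $e_1(z)=z\in\A$. The semigroup law gives
$$
\varphi_t-\varphi_s=C_{\varphi_s}(\varphi_{t-s})-C_{\varphi_s}(e_1)=C_{\varphi_s}(\varphi_{t-s}-e_1),
$$
hence $\|\varphi_t-\varphi_s\|_{\A}\le\|C_{\varphi_s}\|_{\A}\|\varphi_{t-s}-e_1\|_{\A}$. The norms $\|C_{\varphi_s}\|_{\A}$ stay bounded as $s\to t$ — indeed $\|\varphi_s\|_{H^\infty}\le R$ yields $\|C_{\varphi_s}\|^p_{\A}\lesssim\|\om\|_{L^1(\D)}\big((1-R)\whw(R)\big)^{-1}$ via \eqref{eqx1} (or one may cite \cite[Theorem 15]{PR2016}) — while $\|\varphi_{t-s}-e_1\|_{\A}=\|C_{t-s}(e_1)-e_1\|_{\A}\to0$ as $s\to t$ by strong continuity of $\Ct$ on $\A$. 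Therefore \eqref{normcontinuous} holds with $\gamma_0=\alpha(\om)$ and the prescribed $t_0$, and Theorem~\ref{theorem2} yields the eventual norm continuity of $\Ct$ on $\A$. The only load-bearing point is the uniform $H^\infty$-estimate $\|\varphi_t\|_{H^\infty}\le R$ for $t\ge t_0$, which hinges on composing $\varphi_{t_0}$ on the outside and on commutativity of the semigroup; once it is available, this single bound controls all three delicate factors in \eqref{normcontinuous} at once, and nothing beyond routine bookkeeping is expected.
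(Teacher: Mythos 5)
Your proposal is correct and follows essentially the same route as the paper: both proofs hinge on the observation that the semigroup law forces $\|\varphi_t\|_{H^\infty}\le\|\varphi_{t_0}\|_{H^\infty}<1$ for all $t\ge t_0$, which makes the factors $(1-|a|)^{\gamma}/\whw(a)$ and $|1-\bar a\varphi_t(z)|^{-(\gamma+1)}$ harmless uniformly in $a$, and then both invoke the sufficiency direction of Theorem~\ref{theorem2} (valid for $\om\in\DD$). The only divergence is in the final limiting step, where the paper applies the dominated convergence theorem to the pointwise-vanishing quantity $\delta(\varphi_s(z),\varphi_t(z))$, while you instead bound by $\|\varphi_t-\varphi_s\|^p_{\A}$ and use strong continuity together with the factorization $\varphi_t-\varphi_s=C_{\varphi_s}(\varphi_{t-s}-e_1)$; both endings are valid.
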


\begin{proof}
    By the hypothesis and the property of semigroup $\at$, we see that $\|\varphi_t\|_{H^\infty}<1$ for all $t>{t_0}$. Therefore, we deduce 
    $$
    \lim_{s\to t}\delta(\varphi_s(z),\varphi_t(z))=\lim_{s\to t}\left|\frac{\varphi_s(z)-\varphi_t(z)}{1-\overline{\varphi_s(z)}\varphi_t(z)}\right|=0,\quad z\in\D,~~~s,t>t_0.
    $$
    Then the application of Theorem \ref{theorem2} (sufficiency holds for the case $\om\in\DD$) and the dominated convergence theorem verify the result, as required. 
\end{proof}

Another regularity property of a strongly continuous semigroup we are considering now is the so-called eventual compactness. Recall that the composition semigroup $\Ct$ induced by $\at$ is eventually compact on a Banach space $X$ if there exists a $t_0>0$ such that for all $t>t_0$, $C_{\varphi_t}$ is compact on $X$. It follows from \cite[Lemma 5]{EN1} that an eventually compact semigroup is always eventually norm continuous. We have the following description for the eventual compactness of composition semigroup on $A^p_\om$, provided $\om\in\DD$. 

\begin{theorem}
   Let $1\leq p<\infty$, $\omega\in\DD$. Suppose $\at$ is a semigroup of analytic self-maps of $\D$ and $\Ct$ is the corresponding composition semigroup on $A^p_\om$. Then $\Ct$ is eventually compact on $A^p_\om$ if only if there exists a $t_0>0$ such that for all $t>t_0$
   $$
\limsup_{|z|\rightarrow 1^-}\frac{\omega^\star(z)}{\omega^\star(\varphi_t(z))}=0.
   $$
\end{theorem}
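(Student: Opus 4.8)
The plan is to reduce the assertion about the semigroup $\Ct$ to an assertion about each individual operator $C_{\varphi_t}$, and then to read off the answer from the essential-norm estimates for composition operators on $A^p_\om$ with $\om\in\DD$. By the definition recalled above, $\Ct$ is eventually compact on $A^p_\om$ if and only if there is a $t_0>0$ with $C_{\varphi_t}$ compact on $A^p_\om$, i.e. $\|C_{\varphi_t}\|_{e,A^p_\om}=0$, for every $t>t_0$. Since every element $\varphi_t$ of the semigroup $\at$ is univalent on $\D$, it therefore suffices to prove the single-operator statement: \emph{if $\varphi$ is a univalent self-map of $\D$, then $C_\varphi$ is compact on $A^p_\om$ if and only if $\limsup_{|z|\to1^-}\omega^\star(z)/\omega^\star(\varphi(z))=0$.}

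To prove this I would combine two facts already used in Section~\ref{sec2}. First, by \cite[Theorem 19]{PR2016} applied with $p=q$ (this is exactly the comparison underlying the proof of Lemma~\ref{essentialradius}, there stated for the iterates $\varphi_n$) one has $\|C_\varphi\|_{e,A^p_\om}^p\asymp\|C_\varphi\|_{e,A^2_\om}^2$; in particular $C_\varphi$ is compact on $A^p_\om$ if and only if it is compact on $A^2_\om$. Second, because $\varphi$ is univalent, \cite[Proposition 18 and Theorem 19]{PR2016} give
$$
\|C_\varphi\|_{e,A^2_\om}^2=\limsup_{|w|\to1^-}\frac{\omega^\star(\varphi^{-1}(w))}{\omega^\star(w)}=\limsup_{|w|\to1^-}\frac{\omega^\star(w)}{\omega^\star(\varphi(w))},
$$
which is the identity used in the proof of Theorem~\ref{spectrum} for the maps $\varphi_n$. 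Chaining the two displays, $C_\varphi$ is compact on $A^p_\om$ exactly when $\limsup_{|z|\to1^-}\omega^\star(z)/\omega^\star(\varphi(z))=0$, as wanted.

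It then remains to assemble the two implications. If $\Ct$ is eventually compact, choose $t_0$ as in the definition; for each $t>t_0$ the operator $C_{\varphi_t}$ is compact, so the single-operator statement (with $\varphi=\varphi_t$) forces $\limsup_{|z|\to1^-}\omega^\star(z)/\omega^\star(\varphi_t(z))=0$. Conversely, if such a $t_0>0$ exists with the $\limsup$ equal to $0$ for all $t>t_0$, the same single-operator statement gives $\|C_{\varphi_t}\|_{e,A^p_\om}=0$, i.e. $C_{\varphi_t}$ compact, for all $t>t_0$, which is precisely eventual compactness. As a consistency check, $C_{\varphi_t}=C_{\varphi_{t-s}}C_{\varphi_s}$ for $0<s<t$ together with boundedness of composition operators shows that compactness of $C_{\varphi_s}$ for one parameter value propagates to all larger ones, so the two occurrences of $t_0$ are compatible. (One could instead route the equivalence through \cite[Theorem 5.12]{EN1}, via eventual norm continuity and compactness of the resolvent of $\G$, but the direct argument above is shorter.)

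The proof is essentially bookkeeping on top of the cited results of \cite{PR2016}, and I expect the only delicate point to be verifying that the $A^p$-to-$A^2$ transfer holds for \emph{essential} norms and not merely for operator norms of $C_\varphi$; if \cite[Theorem 19]{PR2016} is quoted only in its norm form, this requires the short additional observation (already implicit in Lemma~\ref{essentialradius}) that the two-sided estimate $\|C_\varphi\|_{e,A^p_\om}^p\asymp\|C_\varphi\|_{e,A^2_\om}^2$ follows by the same argument. The univalence of every $\varphi_t$ — which is what makes the general Carleson-type essential-norm expression collapse to the quotient $\omega^\star(w)/\omega^\star(\varphi(w))$ — is the other ingredient one must not forget, but it is standard for one-parameter semigroups.
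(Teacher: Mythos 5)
Your proof is correct and follows essentially the same route as the paper: both reduce eventual compactness to compactness of each individual operator $C_{\varphi_t}$ and then invoke the Pel\'aez--R\"atty\"a results for univalent symbols. The paper simply quotes \cite[Corollary 21]{PR2016} together with \cite[Lemma 23]{PR2016} for the single-operator criterion, whereas you re-derive it from \cite[Proposition 18 and Theorem 19]{PR2016} combined with the $A^p_\om$--$A^2_\om$ essential-norm comparison already used in Lemma~\ref{essentialradius}; this is a difference of bookkeeping within the same reference, not of substance.
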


\begin{proof}
    Since each $\varphi_t$ is univalent on $\D$, a direct application of the combination of \cite[Corollary 21]{PR2016} and \cite[Lemma 23]{PR2016} completes the proof.
\end{proof}

One has seen that the eventual compactness of an operator semigroup implies its eventual norm continuity, while the converse does not hold anymore. Indeed, it turns out that the eventual compactness of an operator semigroup is equivalent to its eventual norm continuity and additionally, the compactness of its generator, see \cite[Lemma 5.11]{EN1} for the proof. In general, if the generator of a strongly continuous operator semigroup has a compact resolvent, then this semigroup has some very nice properties. For instance, the spectrum of the generator only consists of the point spectrum. In particular, for a composition semigroup on weighted Bergman spaces with doubling weights, we can also characterize the compactness of the resolvent of its generator, which will be presented in the next section by studying a certain integral operator.

\section{A novel integral operator on $A^p_\om$}\label{sec4}

For a strongly continuous composition semigroup $\Ct$ on a Banach space $X$ with the generator $\Gamma$, and for any $\lambda\in\rho(\G)$, the resolvent $R(\lambda,\G)$ of $\G$  can be generally represented as 
$$
R(\lambda, \Gamma)(f)=\int_0^{\infty} e^{-\lambda t} C_t(f) \,dt, \quad f \in X.
$$
However, if the Denjoy-Wolff point of the induced semigroup $\at$ is 0, then $R(\lambda,\G)$ has a concrete representation: 

\begin{lemma}\label{lemma3}
Let $1\leq p<\infty$, $\omega\in\DD$, and let $\at$ be a non-trivial semigroup of self-maps on $\D$ with Denjoy-Wolff point 0, infinitesimal generator $G$ and Koenigs function $h$. Suppose $\Ct$ is the corresponding composition semigroup on $A^p_{\omega}$, with the generator $\Gamma$. Then for all $\lambda\in\rho(\Gamma)$ the resolvent operator of $\Gamma$ has the following representation:
\begin{equation*}
R(\lambda,\Gamma)f(z)=-\frac{1}{G'(0)}\frac{1}{(h(z))^{\frac{\lambda}{-G'(0)}}}\int_{0}^zf(\zeta)
(h(\zeta))^{\frac{\lambda}{-G'(0)}-1}h'(\zeta)d\zeta.
\end{equation*}
In particular, $-G'(0)$ belongs to $\rho(\Gamma)$ and hence
\begin{equation}\label{re}
R(-G'(0),\Gamma)f(z)=-\frac{1}{G'(0)h(z)}\int_{0}^zf(\zeta)h'(\zeta)d\zeta.
\end{equation}
\end{lemma}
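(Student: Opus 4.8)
\textbf{Proof plan for Lemma~\ref{lemma3}.} The plan is to derive the integral representation of $R(\lambda,\Gamma)$ directly from the definition of the resolvent as the solution operator of the equation $(\lambda I - \Gamma)f = g$, and then separately verify that $-G'(0)\in\rho(\Gamma)$. Recall from the previous proposition that $\Gamma f = Gf'$ on $D(\Gamma)$, and that differentiating the Koenigs relation \eqref{koenigs} at $t=0$ gives $G(z) = G'(0)\,h(z)/h'(z)$ (with $b=0$); note $\Real G'(0)<0$ since the Denjoy--Wolff point is interior and $\at$ is non-trivial. First I would fix $\lambda\in\rho(\Gamma)$ and $g\in A^p_\omega$, and seek $f\in D(\Gamma)$ with $\lambda f - Gf' = g$, i.e.
$$
f'(z) - \frac{\lambda}{G(z)}f(z) = -\frac{g(z)}{G(z)}.
$$
Substituting $G(z)=G'(0)h(z)/h'(z)$ turns the integrating factor into $\exp\!\left(-\int_0^z \tfrac{\lambda h'(\zeta)}{G'(0)h(\zeta)}\,d\zeta\right)$. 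The subtlety is that $h$ vanishes at $0$, so $\log h$ has a logarithmic singularity there; I would handle this by writing $h(\zeta)=\zeta\,e^{\psi(\zeta)}$ with $\psi$ analytic (which is legitimate since $h$ is univalent with $h(0)=0$, $h'(0)=1$, so $h(\zeta)/\zeta$ is zero-free and analytic on $\D$), so that the integrating factor is $\bigl(h(z)\bigr)^{\lambda/(-G'(0))}$ interpreted via this branch, which is analytic on $\D$ up to the factor $z^{\lambda/(-G'(0))}$. Multiplying through and integrating from $0$ yields
$$
\bigl(h(z)\bigr)^{\lambda/(-G'(0))}f(z) = -\frac{1}{G'(0)}\int_0^z g(\zeta)\,\bigl(h(\zeta)\bigr)^{\lambda/(-G'(0))-1}h'(\zeta)\,d\zeta,
$$
which, upon dividing, is exactly the claimed formula; the constant of integration is forced to be $0$ by requiring $f$ analytic at $0$ (the right-hand side vanishes to the appropriate order at $0$).

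Having produced the candidate $f$, I would then check that it genuinely is $R(\lambda,\Gamma)g$: namely that $f\in A^p_\omega$ and $f\in D(\Gamma)$ with $(\lambda I-\Gamma)f=g$. For $\lambda\in\rho(\Gamma)$ this is automatic once we know the equation $(\lambda I-\Gamma)f=g$ has a unique solution in $D(\Gamma)$ for every $g$ — uniqueness follows because $\lambda\notin\sigma(\Gamma)=\sigma_{P}(\Gamma)\cup\dots$ and in any case injectivity of $\lambda I-\Gamma$ is part of $\lambda\in\rho(\Gamma)$ — and the computation above shows any analytic solution must have this form, while the general representation $R(\lambda,\Gamma)g=\int_0^\infty e^{-\lambda t}C_t g\,dt$ guarantees a solution in $D(\Gamma)$ exists; so the analytic function we wrote down must coincide with it. (Alternatively, one can plug the candidate directly into $\lambda f - Gf'$ and compute, using $\frac{d}{dz}\bigl(h(z)\bigr)^{s}=s\,h^{s-1}h'$, that the $h'$ and $h$ factors telescope and leave exactly $g(z)$; this is a short differentiation check.)

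For the special value $\lambda=-G'(0)$, the exponent $\lambda/(-G'(0))$ equals $1$, so the formula collapses to \eqref{re}:
$$
R(-G'(0),\Gamma)f(z) = -\frac{1}{G'(0)\,h(z)}\int_0^z f(\zeta)h'(\zeta)\,d\zeta.
$$
To confirm $-G'(0)\in\rho(\Gamma)$ I would argue that the operator on the right of \eqref{re} is bounded on $A^p_\omega$: writing $\frac1{h(z)}\int_0^z f h'\,d\zeta$ and integrating by parts, or more directly using that $h(\zeta)=\zeta e^{\psi(\zeta)}$, one rewrites this operator in terms of the operator $R_h$ (and hence $Q_h$) discussed after Theorem~\ref{theorem3}; since $h$ is univalent with $h(0)=0$, \cite[Theorem 2]{Ba} gives $\log(h(z)/z)=\psi\in\BMOA$, and then \cite[Proposition 6.1 and Theorem 6.8]{pel2016} yield boundedness of $R_h$ on $A^p_\omega$ for $\om\in\DD$. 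A bounded right-and-left inverse of $-G'(0)I-\Gamma$ then certifies $-G'(0)\in\rho(\Gamma)$.

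\textbf{Main obstacle.} The genuinely delicate point is the logarithmic singularity of $h$ at the origin: one must be careful that $\bigl(h(\zeta)\bigr)^{\lambda/(-G'(0))-1}h'(\zeta)$, despite the fractional power of a function vanishing at $0$, is integrable near $0$ and that the product $\bigl(h(z)\bigr)^{\lambda/(-G'(0))}f(z)$ is single-valued and analytic — this is exactly what the factorization $h(\zeta)=\zeta\,e^{\psi(\zeta)}$ resolves, reducing everything to the harmless fractional power $\zeta^{\lambda/(-G'(0))}$ which cancels between numerator and denominator. Everything else — the ODE solve, the telescoping check, and invoking the earlier boundedness results for $R_h$ — is routine.
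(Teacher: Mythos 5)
Your derivation of the integral formula (solving $\lambda f-Gf'=g$ with the integrating factor $(h(z))^{\lambda/(-G'(0))}$, using $G=G'(0)h/h'$, and checking the candidate is a two-sided inverse of $\lambda I-\Gamma$) is exactly the computation the paper dismisses as ``elementary to compute'', so that part coincides. Where you genuinely diverge is the proof that $-G'(0)\in\rho(\Gamma)$: the paper argues semigroup-theoretically --- since $\varphi_t(0)=0$, \cite[Theorem 15]{PR2016} gives $\sup_{t}\|C_t\|<\infty$, so the growth bound is $0$ and \cite[Theorem 1.10, p.42]{EN1} places the open right half-plane in $\rho(\Gamma)$, settling the case $\Real(-G'(0))>0$; the remaining case, $-G'(0)$ purely imaginary, is the rotation group $G(z)=-i\alpha z$, handled by exhibiting the bounded inverse $g\mapsto\frac{1}{i\alpha z}\int_0^z g(\zeta)\,d\zeta$. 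You instead prove boundedness on $A^p_\om$ of the explicit operator in \eqref{re} (i.e.\ of $R_h$) via Baernstein's theorem and \cite[Theorem 6.8]{pel2016} --- the machinery of Section \ref{sec4}, whose proof of Theorem \ref{thm} does not use Lemma \ref{lemma3}, so there is no circularity --- and then conclude from the two-sided bounded inverse (which indeed maps into $D(\Gamma)$, since $Gf'=\lambda f-g$). Your route is uniform in $G'(0)$ and needs no case split, at the price of the heavier $\BMOA$/Carleson machinery; the paper's route is softer but requires the dichotomy. Two caveats: your assertion $\Real G'(0)<0$ is false when $(\varphi_t)$ is a group of elliptic automorphisms (only $\Real G'(0)\le 0$ holds) --- fortunately your argument never uses strict negativity, and it is precisely this borderline case that forces the paper's case split, which your approach absorbs; and for general $\lambda\in\rho(\Gamma)$ the factorization $h(\zeta)=\zeta e^{\psi(\zeta)}$ settles single-valuedness but not convergence of the integral at the origin when $\Real\left(\lambda/(-G'(0))\right)\le 0$ (also, the Laplace-transform representation you invoke for existence is only guaranteed for $\Real\lambda$ above the growth bound); these issues afflict the paper's terse statement equally and do not touch the special value $\lambda=-G'(0)$, where the exponent is $1$.
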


\begin{proof}
Let
$$
R:=-\frac{1}{G'(0)}\frac{1}{(h(z))^{\frac{\lambda}{-G'(0)}}}\int_{0}^zf(\zeta)
(h(\zeta))^{\frac{\lambda}{-G'(0)}-1}h'(\zeta)d\zeta.
$$
It is elementary to compute that $(\lambda I-\Gamma)R=R(\lambda I-\Gamma)=I$, which shows that $R$ is the resolvent operator of $\Gamma$. Since the Denjoy-Wolff point of $\at$ is 0, it is easy to see that $\text{Re}\,(-G'(0))\geq0$. If $\text{Re}\,(-G'(0))>0$, by \cite[Theorem 15 and (4.7)]{PR2016}, we deduce that there exist constants $\eta=\eta(\om)>0$ and $C=C(p,\om,\eta)>0$ such that $\|C_t\|\leq C\left(\frac{1+|\varphi_t(0)|}{1-|\varphi_t(0)|}\right)^\eta$. Since $0$ is the Denjoy-Wolff point, we immediately know that $\|C_t\|$ is uniformly bounded, and hence we have
$$
\omega_0:=\lim_{t\rightarrow\infty}\frac{\log\|C_t\|}{t}=0.
$$
So $-G'(0)\in\rho(\Gamma)$ by \cite[Theorem 1.10, p.42]{EN1}. If $-G'(0)$ is a pure imaginary number, write $G(z)=-i\alpha z$, where $\alpha\in\mathbb{R}\setminus\{0\}$ and $\Gamma f=-i\alpha zf'(z)$. In this case, $(i\alpha-\Gamma)(f)=g$ has the unique analytic solution
$$
f(z)=\frac{1}{i\alpha z}\int_0^zg(\zeta)d\zeta.$$
It is not difficult to see that the operator
$$
g\to\frac{1}{i\alpha z}\int_0^zg(\zeta)d\zeta
$$
is bounded on $A^p_{\omega}$. Hence $-G'(0)$ belongs to $\rho(\Gamma)$. The proof is complete.
\end{proof}

Let us return to $R(-G'(0),\G)$ defined in \eqref{re} for a moment.   In this definition, the function $h$ is a certain spiral-like function with $h(0)=0$, as the Koenigs function of the semigroup $h$, and hence $h$ is certainly univalent on $\D$. Since the resolvent is always bounded, such $R(-G'(0),\G)$ is bounded on $A^p_\om$. Moreover, its compactness plays an essential role in characterizing the compactness of the general $R(\lambda,\G)$. Nevertheless, such an operator can be well-defined for any univalent function $h$ with $h(0)=0$. Therefore, it seems that it is even more natural to characterize bounded and compact $R_h$ on $A^p_\om$ with $\om\in\DD$ for the full range $0<p<\infty$, where $R_h$ is defined as
$$
R_h(f)(z)=\frac{1}{h(z)}\int_0^z f(\z)h'(\z)\,d\z,\quad f\in\H(\D),
$$
provided $h$ is univalent function on $\D$ with $h(0)=0$. 

Before presenting the result for the above question, we first recall that for $\alpha\geq1$ and $\omega\in\DD$, $\mathcal{C}^{\alpha}(\omega^\star)$ consists of all $f\in\H(\D)$ such that
$$
\|f\|^2_{\mathcal{C}^{\alpha}(\omega^\star)}=|g(0)|^2+
\sup_{I\in\T}\frac{\int_{S(I)}|f'(z)|^2\omega^\star(z)dA(z)}{(\omega(S(I)))^{\alpha}}<\infty;
$$
$\mathcal{C}_0^{\alpha}(\omega^\star)$ consists of all $f\in\H(\D)$ such that
$$
\lim_{|I|\rightarrow0}\frac{\int_{S(I)}|f'(z)|^2\omega^\star(z)dA(z)}{(\omega(S(I)))^{\alpha}}=0.
$$
If $\alpha=1$, then by \cite{PeRa2014} we have
\begin{equation}\label{relation}
  \BMOA\subset \mathcal{C}^{1}(\omega^\star)\subset 
  \B,\quad \VMOA\subset \mathcal{C}_0^{1}(\omega^\star)\subset 
  \B_0.
\end{equation}

Our conclusion can be stated as follows, which contains the results of Theorem \ref{theorem3}.
\begin{theorem}\label{thm}
Let $0< p<\infty$ and $\om\in\DD$. Suppose $h$ is a univalent function on $\D$ satisfying $h(0)=0$. Then $R_h$ is automatically bounded on $A^p_\om$. Moreover, the following statements are equivalent:
\begin{itemize}
    \item [(i)] $R_h$ is compact on $A^p_\om$;
   \item[(ii)] $\log\frac{h(z)}{z}\in\mathcal{C}^1_0(\omega^\star)$;
\item[(iii)] $\log\frac{h(z)}{z}\in\B_0$;
\item[(iv)] $\log\frac{h(z)}{z}\in\VMOA$.
\end{itemize}
\end{theorem}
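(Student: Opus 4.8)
The plan is to reduce every assertion about $R_h$ first to the auxiliary operator $Q_h$ and then to the Volterra-type operator $T_g f(z)=\int_0^z f(\z)g'(\z)\,d\z$ with symbol $g(z)=\log\frac{h(z)}{z}$, whose boundedness and compactness on $A^p_\om$ are already described, via \cite[Proposition 6.1 and Theorem 6.8]{pel2016}, by the membership of $g$ in $\mathcal{C}^1(\om^\star)$ and $\mathcal{C}^1_0(\om^\star)$, respectively. Since $h$ is univalent with $h(0)=0$, the function $g=\log\frac hz$ is analytic on $\D$, and \cite[Theorem 2]{Ba} gives $g\in\BMOA$; a little-oh version of that estimate (equivalently, the coincidence of $\B_0$ and $\VMOA$ on symbols of the form $\log\frac hz$ with $h$ univalent) yields $g\in\B_0\Rightarrow g\in\VMOA$, which is exactly $(iii)\Rightarrow(iv)$. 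Since $(iv)\Rightarrow(ii)\Rightarrow(iii)$ is immediate from the inclusions $\VMOA\subset\mathcal{C}^1_0(\om^\star)\subset\B_0$ recorded in \eqref{relation}, the whole chain collapses to proving $(i)\Leftrightarrow(ii)$, that is, $R_h$ is compact on $A^p_\om$ if and only if $g\in\mathcal{C}^1_0(\om^\star)$; replacing $\mathcal{C}^1_0(\om^\star)$ by $\mathcal{C}^1(\om^\star)$ and compactness by boundedness throughout the same scheme will simultaneously yield the unconditional boundedness of $R_h$.

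The first step is the link between $Q_h$ and $T_g$. Using $\frac{h'(\z)}{h(\z)}=\frac1\z+g'(\z)$, one checks that $Q_h f=\mathcal{A}f+\frac1z T_g(e_1 f)$, where $\mathcal{A}f(z)=\frac1z\int_0^z f$ and $e_1(\z)=\z$. Since $\mathcal{A}$, multiplication by $e_1$, and $G\mapsto G/z$ acting on functions vanishing at the origin are all bounded on $A^p_\om$ for $\om\in\DD$, and none of them affects compactness, elementary bookkeeping shows that $Q_h$ is bounded (resp. compact) on $A^p_\om$ exactly when $T_g$ is. Combined with \cite[Proposition 6.1 and Theorem 6.8]{pel2016} this gives $Q_h$ bounded $\iff g\in\mathcal{C}^1(\om^\star)$ and $Q_h$ compact $\iff g\in\mathcal{C}^1_0(\om^\star)$; in particular $Q_h$ is automatically bounded, since $g\in\BMOA\subset\mathcal{C}^1(\om^\star)$.

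It remains to transfer from $Q_h$ to $R_h$. Writing $h(z)=z e^{g(z)}$ one obtains the pointwise identity
\[
R_h f(z)=e^{-g(z)}\,Q_h\!\bigl(e^{g}f\bigr)(z),
\]
equivalently the differential relation $(R_h f)'(z)=\bigl(\tfrac1z+g'(z)\bigr)\bigl(f(z)-R_h f(z)\bigr)$, together with $R_h f(0)=f(0)$ (so that $f-R_h f$ vanishes at the origin). Although $e^{\pm g}$ need not be bounded, this is where univalence of $h$ is used decisively: the distortion theorems provide two-sided control of $|h(z)/z|$ along radial segments, and, combined with a norm equivalence for $A^p_\om$, $\om\in\DD$, expressing $\|f\|_{A^p_\om}$ through $f'$ and the weight (as in \cite{pel2016,PeRa2014}), this lets one absorb the feedback term $R_h f$ and compare $\|R_h f\|_{A^p_\om}$ with $\|Q_h f\|_{A^p_\om}$ up to constants, and likewise identify the compactness of the two operators. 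Feeding in the previous step then shows that $R_h$ is bounded on $A^p_\om$ (the first assertion) and that $R_h$ is compact if and only if $g=\log\frac hz\in\mathcal{C}^1_0(\om^\star)$, which is $(i)\Leftrightarrow(ii)$ and closes the argument.

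The main obstacle is this last transfer: $e^{g}$ is genuinely unbounded, so $R_h$ cannot be conjugated to $Q_h$ by a multiplication operator, and the occurrence of $R_h f$ on the right-hand side of the differential identity must be controlled, which is possible only because $h$ is univalent. Making this comparison uniform over all univalent $h$ and all $0<p<\infty$, in the doubling-weight regime where the relevant area integrals are weighted by $\om^\star$ rather than by a power of $1-|z|$, is the technical heart of the proof. A secondary but nontrivial point is the univalent-function implication $\log\frac hz\in\B_0\Rightarrow\log\frac hz\in\VMOA$ used in the reduction.
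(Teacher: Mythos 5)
Your reduction at the level of $Q_h$ is essentially the paper's: writing $\frac{h'}{h}=\frac1z+\bigl(\log\frac hz\bigr)'$, splitting $Q_h$ into the (compact) averaging operator plus a Volterra-type piece with symbol $g=\log\frac hz$, invoking Baernstein for $g\in\BMOA\subset\mathcal{C}^1(\om^\star)$ and \cite[Theorem 6.8]{pel2016} for boundedness/compactness of that piece, and settling the equivalence of (ii)--(iv) by \eqref{relation} together with Pommerenke's univalence result (your ``little-oh Baernstein'' step). Up to notation this is exactly the paper's treatment of $Q_h$ and of (ii)$\Leftrightarrow$(iii)$\Leftrightarrow$(iv).

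The genuine gap is the passage from $Q_h$ to $R_h$, which you yourself flag as ``the technical heart'' but do not carry out, and the route you sketch would not close. The conjugation identity $R_hf=e^{-g}Q_h(e^{g}f)$ is correct pointwise but gives no norm information, since $e^{\pm g}$ is an unbounded multiplier; and the fallback differential relation $(R_hf)'=\bigl(\frac1z+g'\bigr)(f-R_hf)$ cannot be fed into a Littlewood--Paley-type norm and ``absorbed'': the sharp univalence bound is only $\bigl|\frac{zh'(z)}{h(z)}\bigr|\lesssim\frac{1}{1-|z|}$, so the feedback term produces an estimate of the form $\|R_hf\|\lesssim\|f\|+\|R_hf\|$ with no small constant, and moreover one does not yet know $R_hf\in A^p_\om$, so the argument is circular. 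The paper avoids all pointwise estimates on $h$ at this stage by using Siskakis's exact operator identities (its display \eqref{eq}): with $P_hf(z)=\frac{1}{zh(z)}\int_0^zf(\z)\z h'(\z)\,d\z$ one has $M_zP_h=R_hM_z$ and $Q_h=P_h+Q_hP_h$ (obtained by integration by parts), and these algebraic relations transfer boundedness and compactness between $Q_h$, $P_h$ and $R_h$ directly; combined with the $Q_h$ analysis this yields both the automatic boundedness of $R_h$ and the equivalence (i)$\Leftrightarrow$(ii). Without this device, or a genuine substitute for it, your proposal establishes the properties of $Q_h$ but not the stated theorem about $R_h$.
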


\begin{proof}
  Since $h$ is univalent with $h(0)=0$ by the hypothesis,  the equivalence among (ii), (iii) and (iv) is obvious by the relation \eqref{relation} and a result in \cite{Pom}. Therefore, it suffices to verify that $R_h$ is always bounded and (i) and (ii) are equivalent. To this end, we are going to use a method applied in the proof of \cite[Theorem 6.1]{AG4}, which points out that $R_h$ can be decomposed as follows.
\begin{equation}\label{eq}
M_zP_h=R_hM_z, \quad Q_h=P_h+Q_hP_h,
\end{equation}
where
$$
M_zf(z)=zf(z),
$$
$$
P_hf(z)=\frac{1}{zh(z)}\int_0^zf(\zeta)\zeta h'(\zeta)d\zeta,
$$
and
$$
Q_hf(z)=\frac1z\int_0^zf(\zeta)\frac{\zeta h'(\zeta)}{h(\zeta)}d\zeta.
$$
Moreover, it is elementary to see that
\begin{equation}\label{eq1}
Q_hf(z)=J(f)(z)+L_hM_z(f)(z),
\end{equation}
where
$J(f)(z)=\frac{1}{z}\int_0^zf(\zeta)d\zeta$ and
\begin{equation}\label{eq2}
L_h(f)(z)=\frac{1}{z}\int_0^zf(\zeta)\left(\log\frac{h(\zeta)}{\zeta}\right)'d\zeta.
\end{equation}
Now, since $h$ is univalent with $h(0)=0$, it follows from \cite[Theorem 2]{Ba} that $\log\frac{h(z)}{z}\in\BMOA$ hence it certainly belongs to $\mathcal{C}^1(\omega^\star)$ by \eqref{relation}. Then applying the \cite[Theorem 6.8 (i)]{pel2016}, we immediately see that $L_h$ is bounded on $A^p_\om$. Moreover, trivially, $J$ is also bounded on $A^p_\om$. Thus, $Q_h$ is bounded on $A^p_\om$ by \eqref{eq1}, and so are $P_h$ and $R_h$ by the identities \eqref{eq}.

Similarly, to prove that (i) and (ii) are equivalent, we only need to notice that $L_h$ in \eqref{eq2} is compact if and only if $\log\frac{h(z)}{z}\in\mathcal{C}^1_0(\omega^\star)$. Then since $J$ is compact on $A^p_\om$, $L_h$ is compact if and only if $Q_h$ is compact by \eqref{eq1}. Likewise, by \eqref{eq}, $Q_h$ is compact if and only if $P_h$ is compact if and only if $R_h$ is compact. Combining those facts, we are in a position to finish the proof.
\end{proof}

As a corollary of the above theorem, we can get some characterizations for compact resolvent of composition semigroup action on $A^p_\om$.
\begin{corollary}\label{coroally}
Let $1\leq p<\infty$ and $\omega\in\widehat{\mathcal{D}}$. Suppose $\at$ is a semigroup of analytic self-maps of $\D$ with Denjoy-Wolff point $0$, infinitesimal generator $G$, and associated Koenigs function $h$. Denote by $\G$ the infinitesimal generator if the corresponding composition semigroup $\Ct$ on $A^p_{\omega}$ and denote by $R(\lambda,\G)$ the resolvent operator for $\lambda\in\rho(\G)$. Then the following statements are equivalent:
\begin{itemize}
\item[(i)] $R(\lambda,\G)$ is compact on $A^p_{\omega}$;
\item[(ii)] $\log\frac{h(z)}{z}\in\mathcal{C}^1_0(\omega^\star)$;
\item[(iii)] $\log\frac{h(z)}{z}\in\B_0$;
\item[(iv)] $\log\frac{h(z)}{z}\in\VMOA$;
\item[(v)] For any $\z\in\partial\D$, 
$$
\lim_{z\to\z}\left|\frac{G(z)}{z-\z}\right|=\infty.
$$
\end{itemize}
\end{corollary}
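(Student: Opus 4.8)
The plan is to prove Corollary~\ref{coroally} by stringing together the results already established. The equivalence of (ii), (iii), (iv) is immediate from \eqref{relation} together with Pommerenke's characterization that a univalent function $h$ with $h(0)=0$ satisfies $\log\frac{h(z)}{z}\in\BMOA$, and moreover $\log\frac{h(z)}{z}\in\VMOA$ if and only if it lies in $\B_0$; this is exactly what was used in the proof of Theorem~\ref{thm}. The crux of the corollary is the equivalence $(i)\Leftrightarrow(ii)$ and then $(iv)\Leftrightarrow(v)$, since everything else is bookkeeping.

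For $(i)\Leftrightarrow(ii)$, I would invoke \cite[Theorem 5.12]{EN1} (cited in the introduction): for a strongly continuous semigroup, eventual compactness is equivalent to eventual norm continuity plus compactness of the resolvent $R(\lambda,\Gamma)$ for some (hence all) $\lambda\in\rho(\Gamma)$. But the more direct route, and the one I would actually take, is to use Lemma~\ref{lemma3}: since the Denjoy--Wolff point is $0$, we have $-G'(0)\in\rho(\Gamma)$ and $R(-G'(0),\Gamma)f(z)=-\frac{1}{G'(0)h(z)}\int_0^z f(\zeta)h'(\zeta)\,d\zeta=-\frac{1}{G'(0)}R_h(f)(z)$, so $R(-G'(0),\Gamma)$ is a scalar multiple of $R_h$. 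Hence $R(-G'(0),\Gamma)$ is compact on $A^p_\om$ if and only if $R_h$ is, which by Theorem~\ref{thm} is equivalent to $\log\frac{h(z)}{z}\in\mathcal{C}^1_0(\omega^\star)$. Finally, the resolvent equation $R(\lambda,\Gamma)-R(\mu,\Gamma)=(\mu-\lambda)R(\lambda,\Gamma)R(\mu,\Gamma)$ (quoted in the proof of Theorem~\ref{theorem1}) shows that compactness of $R(\lambda,\Gamma)$ for one $\lambda\in\rho(\Gamma)$ forces it for all $\lambda\in\rho(\Gamma)$, which upgrades the statement to the "for all $\lambda$" form claimed in (i).

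The remaining and genuinely new piece is $(iv)\Leftrightarrow(v)$, translating the $\VMOA$ (equivalently $\B_0$) membership of $\log\frac{h(z)}{z}$ into the boundary blow-up condition on $G$. The plan here is to use the representation \eqref{G} of the generator together with the identity $G(z)=G'(0)\frac{h(z)}{h'(z)}$ obtained by differentiating \eqref{koenigs} (this is exactly the identity derived in the proof of Theorem~\ref{theorem1}(i), with $b=0$). A short computation gives $\left(\log\frac{h(z)}{z}\right)' = \frac{h'(z)}{h(z)}-\frac1z = \frac{G'(0)}{G(z)}-\frac1z$, so $(1-|z|^2)\bigl|\left(\log\frac{h(z)}{z}\right)'\bigr| = (1-|z|^2)\left|\frac{G'(0)}{G(z)}-\frac1z\right|$. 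Since $\frac1z$ is bounded near $\partial\D$, the function $\log\frac{h(z)}{z}$ lies in $\B_0$ precisely when $(1-|z|^2)\left|\frac{G'(0)}{G(z)}\right|\to 0$ as $|z|\to1^-$, i.e. when $\left|\frac{G(z)}{1-|z|^2}\right|\to\infty$; and a standard comparison (using that $1-|z|^2\asymp|z-\zeta|$ when $z\to\zeta$ nontangentially, and handling the tangential directions by the univalence/spiral-likeness of $h$ which controls the growth of $h/h'$) converts this into the stated condition $\lim_{z\to\zeta}\bigl|\frac{G(z)}{z-\zeta}\bigr|=\infty$ for every $\zeta\in\partial\D$.

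The main obstacle I anticipate is precisely this last conversion between the radial/Bloch formulation $(1-|z|^2)|(\log(h/z))'|\to0$ and the pointwise boundary condition $(v)$: one must be careful that the limit in $(v)$ is an unrestricted limit $z\to\zeta$, not merely radial or nontangential, and that $\zeta$ ranges over all of $\partial\D$. The $\B_0$ condition only sees the radial rate $(1-|z|)$, so to recover the full statement one needs the extra rigidity coming from $h$ being $\mu$-spiral-like (so $G$ has the factored form $(\overline b z-1)(z-b)P(z)$ with $\Real P\ge0$ and $b=0$, i.e. $G(z)=-z\,P(z)$ with $\Real P\geq0$): this forces $1/P$ to be essentially controlled, and one should check that $\left|\frac{G(z)}{z-\zeta}\right|\to\infty$ is equivalent to $\left|\frac{G(z)}{1-|z|}\right|\to\infty$ for such $G$. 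I would isolate this as the one computational lemma to verify carefully; the rest is assembly of already-proven facts.
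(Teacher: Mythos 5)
Your reduction of (i) to (ii)--(iv) is exactly the paper's argument: the resolvent identity shows compactness of $R(\lambda,\G)$ for one $\lambda\in\rho(\G)$ is equivalent to compactness for all $\lambda$, Lemma~\ref{lemma3} identifies $R(-G'(0),\G)$ with the scalar multiple $-\frac{1}{G'(0)}R_h$, and Theorem~\ref{thm} then yields the equivalence of (i), (ii), (iii), (iv). Up to that point the proposal is correct and matches the paper.

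The gap is in your treatment of (iv)$\Leftrightarrow$(v) (equivalently (iii)$\Leftrightarrow$(v)). The paper does not prove this implication at all: it simply cites \cite[Theorem 6.1]{AG4}, where the equivalence between $\log\frac{h(z)}{z}\in\B_0$ and the boundary condition on $G$ is established. Your computation $(\log\frac{h(z)}{z})'=\frac{G'(0)}{G(z)}-\frac1z$ and the resulting reformulation of the $\B_0$ condition as $\lim_{|z|\to1^-}\frac{|G(z)|}{1-|z|}=\infty$ are fine (using $G'(0)\neq0$ for a non-trivial elliptic semigroup). The easy direction, (v) $\Rightarrow$ radial blow-up, follows from $|z-\z|\ge 1-|z|$ plus a compactness argument on $\T$. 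But the converse -- passing from a condition that only controls $|G|$ against $1-|z|$ to the unrestricted limit $\lim_{z\to\z}|G(z)/(z-\z)|=\infty$ at every $\z\in\partial\D$, including tangential approach where $|z-\z|$ can be much larger than $1-|z|$ -- is precisely the nontrivial content of the cited theorem, and your sketch leaves it as an unverified ``computational lemma''. The hint that spiral-likeness, i.e. $G(z)=-zP(z)$ with $\Real P\ge0$, ``forces $1/P$ to be essentially controlled'' is not a proof: Harnack-type lower bounds only give $|P(z)|\gtrsim 1-|z|$, which is far from what is needed, and ruling out a decay of $|P|$ like $|z-\z|^{1/2}$ along a tangential curve while the radial condition persists requires a genuine argument (in the literature this is done via the Herglotz representation/angular-derivative analysis, not by a one-line comparison). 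Either supply that argument in full or, as the paper does, quote \cite[Theorem 6.1]{AG4}; as written, this step of your proposal is not established.
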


\begin{proof}
The well-known resolvent equation
$$
R(\lambda,\G)-R(\mu,\G)=(\mu-\lambda)R(\lambda,\G)R(\mu,\G) \quad \lambda,\mu\in\rho(\G)
$$
shows that $R(\lambda,\G)$ is compact on $A^p_{\omega}$ for all $\lambda\in\rho(\G)$ if and only if it is compact for a certain $\lambda_0\in\rho(\G)$. So, by Lemma \ref{lemma3}, to prove the compactness of $R(\lambda,\G)$, it suffices to identify the compactness of $R(-G'(0),\G)$, or equivalently, the compactness of $R_h$ defined as before. Therefore, a direct application of Theorem \ref{thm} gives the equivalence of (i), (ii), (iii) and (iv). In addition, the equivalence of (v) and (iii) follows from \cite[Theorem 6.1]{AG4}. The proof is complete.
\end{proof}

Another attempt for $R_h$ is to characterize its spectrum on $A^p_\om$ for any univalent function $h$ with $h(0)=0$. This question is generally challenging for those $h$, but it seems promising for some special $h$ due to the spectral mapping theorem for the resolvent if $h$ is a Koenigs function of some semigroups. 

First, recall that a univalent function $h:\D\rightarrow\Omega:=h(\D)$ is said to be $\mu$-spiral-like if there is a number $\mu\in\C$ with $\text{Re}\mu>0$ such that for each $\omega\in\Omega$ and $t\geq0$ the point $e^{-t\mu}\omega$ also belongs to $\Omega$. Then the class $\varphi_t(z):=h^{-1}(e^{-\mu t}h(z))$ becomes a one-parameter semigroup with the Koenigs function $h$.  Moreover, the Denjoy-Wolff point of such $\at$ is 0 and  infinitesimal generator $G=-\mu\frac{h(z)}{h'(z)}$ with $G'(0)=-\mu$. Therefore, by Lemma \ref{lemma3}, $\mu$ is in the resolvent set of $\G$, the generator of the corresponding $\Ct$, and the resolvent $R(\mu,\G)$ can be written as
\begin{equation}\label{1x}
 R(\mu,\G)=\frac{1}{\mu h(z)}\int_0^z f(\z)h'(\z)\,d\z=\frac{1}{\mu}R_h,\quad f\in A^p_\om.   
\end{equation}

Next, the following spectral mapping theorem for the resolvent (see \cite[p.161]{EN1}) asserts that 
\begin{equation}\label{2x}
 \sigma_{A^p_\om}(R(\mu,\G))\setminus\{0\}=(\mu-\sigma_{A^p_\om}(\G))^{-1}:=\left\{\f{1}{\mu-\lambda}:\lambda\in\sigma_{A^p_\om}(\G)\right\}   
\end{equation}
for each $\lambda\in\rho(\G)$. Combining those facts, we may get the following result:
\begin{proposition}\label{pro}
Let $1\leq p<\infty$, $\Re{\mu}>0$ and $\om\in\DD$. Suppose $h$ is a $\mu$-spiral-like function on $\D$ satisfying $h(0)=0$. Then the following statements hold:
\begin{itemize}
    \item [(i)] If $\log\frac{h(z)}{z}\in\B_0$, then 
    $$
    \sigma_{A^p_\om}(R_h)=\left\{\frac{1}{k+1}, ~h^k\in A^p_\om, ~k=0,1,2,\cdots\right\}\cup\{0\};
    $$
    \item [(ii)] For any $t>0$, $\sigma_{A^2_\om}(R_h)$ satisfies the following identity:
    $$
   \left\{e^{t\left(\mu-\frac{\mu}{\sigma_{A^2_\om}(R_h)}\right)}: \lambda\in\sigma(\G)\right\}\cup\{0\}=\{\lambda\in\C:|\lambda|\leq r_{e, A^2_\om}(C_{\varphi_{t}})\}\cup\{e^{-k\mu t}:k=0,1,2,3\ldots\},
    $$
\end{itemize}
where $\varphi_t(z):=h^{-1}(e^{-\mu t}h(z))$ for all $z\in\D$ and $\G$ is the generator of  the corresponding composition semigroup.
\end{proposition}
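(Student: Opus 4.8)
```latex
\textbf{Proof proposal for Proposition~\ref{pro}.}
The plan is to feed the three ingredients already assembled in the excerpt into each other: the concrete representation \eqref{1x} of the resolvent $R(\mu,\G)=\frac1\mu R_h$, the spectral mapping theorem \eqref{2x} for the resolvent, and the description of $\sigma_{A^p_\om}(\G)$ coming from Theorem~\ref{theorem1} together with Corollary~\ref{theorem+}. For part (i), first note that since $h$ is the Koenigs function of the elliptic semigroup $\varphi_t=h^{-1}(e^{-\mu t}h)$ with Denjoy--Wolff point $0$, the hypothesis $\log\frac{h(z)}{z}\in\B_0$ is, by Corollary~\ref{coroally}, exactly the statement that $R(\mu,\G)$ has compact resolvent, hence $\sigma_{A^p_\om}(\G)=P\sigma_{A^p_\om}(\G)$. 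Now $\mu$-spiral-likeness of $h$ together with $h(0)=0$ forces $h$ to be non-vanishing on $\D\setminus\{0\}$, so the semigroup $\at$ consists either entirely of automorphisms or of none; in the automorphism case $h$ is a disk automorphism fixing $0$, so $h^k\in A^p_\om$ for all $k$, and in either case Theorem~\ref{theorem1}(i) gives $\sigma_{A^p_\om}(\G)=\{kG'(b):h^k\in A^p_\om,\ k=0,1,2,\dots\}=\{-k\mu:h^k\in A^p_\om\}$ because $G'(0)=-\mu$. Plugging $\lambda=-k\mu$ into \eqref{2x} with this $\mu$ gives $\frac{1}{\mu-(-k\mu)}=\frac{1}{(k+1)\mu}$, and multiplying by $\mu$ via \eqref{1x} yields $\sigma_{A^p_\om}(R_h)\setminus\{0\}=\{\frac{1}{k+1}:h^k\in A^p_\om\}$; the point $0$ must be adjoined because $R_h$ is compact (Theorem~\ref{thm}) on the infinite-dimensional space $A^p_\om$, so $0\in\sigma_{A^p_\om}(R_h)$ automatically, giving the claimed formula.

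For part (ii) the strategy is identical but one exploits that on the Hilbert space $A^2_\om$ the spectral mapping theorem \eqref{weakspectrum} holds with no eventual-norm-continuity hypothesis, so Theorem~\ref{theorem1}(ii) — or rather its proof — applies directly to give, for every $t>0$,
$$
\{e^{t\lambda}:\lambda\in\sigma_{A^2_\om}(\G)\}\cup\{0\}=\{\lambda\in\C:|\lambda|\le r_{e,A^2_\om}(C_{\varphi_t})\}\cup\{e^{G'(0)t\,k}:k=0,1,2,\dots\},
$$
and since $G'(0)=-\mu$ the last set is $\{e^{-k\mu t}:k\ge0\}$. It remains only to rewrite the left-hand side in terms of $R_h$: from \eqref{1x} and the spectral mapping theorem for the resolvent \eqref{2x}, a nonzero $\zeta\in\sigma_{A^2_\om}(R_h)$ corresponds to $\frac{\zeta}{\mu}\in\sigma_{A^2_\om}(R(\mu,\G))\setminus\{0\}$, hence to $\lambda=\mu-\frac{\mu}{\zeta}\in\sigma_{A^2_\om}(\G)$, so that $e^{t\lambda}=e^{t(\mu-\mu/\zeta)}$ and the displayed identity in (ii) follows verbatim. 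One should record explicitly that the automorphism/non-automorphism dichotomy is harmless here: if $\at$ consists of automorphisms then $r_{e,A^2_\om}(C_{\varphi_t})=1$ and both sides degenerate to the unit circle plus $\{0\}$, consistently.

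\textbf{Main obstacle.} The routine-looking computations (tracking the constant $\mu$ through \eqref{1x}, \eqref{2x}, and through $G'(0)=-\mu$, and checking the $0$-adjunction) are bookkeeping, not substance. The genuine point that must be argued with care is that the description of $\sigma_{A^p_\om}(\G)$ I am invoking is legitimate in the present setting: in part (i) one needs that $\log\frac{h}{z}\in\B_0$ really does put us in the compact-resolvent regime so that $\sigma_{A^p_\om}(\G)=P\sigma_{A^p_\om}(\G)$ (this is Corollary~\ref{coroally}, but it requires the Denjoy--Wolff point to be $0$, which is built into the $\mu$-spiral-like construction), and one needs to know which $k$ actually occur, i.e. for which $k$ one has $h^k\in A^p_\om$ — here the honest answer is to leave it implicit in the set-builder notation, exactly as the statement does, rather than trying to invoke Proposition~\ref{proo}, which would force tracking the maximal angular opening of $h(\D)$. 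In part (ii) the subtlety is simply that one must use the Hilbert-space version of (SMT) for $A^2_\om$, since $\at$ need not be eventually norm continuous; this is why the statement is restricted to $p=2$ there, and it should be flagged as the reason for the asymmetry between (i) and (ii).
```
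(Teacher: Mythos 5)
Your argument is correct and follows the paper's own proof essentially verbatim: compactness of the resolvent via Corollary~\ref{coroally} reduces $\sigma_{A^p_\om}(\G)$ to its point spectrum, which is then transported through \eqref{1x} and the resolvent spectral mapping identity \eqref{2x} for part (i), while part (ii) combines the Hilbert-space spectral mapping theorem with Theorem~\ref{theorem1} and the same two identities. The only cosmetic differences are your (valid) compactness argument for adjoining $0$ to $\sigma_{A^p_\om}(R_h)$ and your aside about the automorphism dichotomy, which is in fact vacuous here since $\Re\mu>0$ gives $|\varphi_t'(0)|=e^{-t\Re\mu}<1$ for $t>0$, so no $\varphi_t$ is an automorphism.
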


\begin{proof}
    (i) By the above 
explanation, we see that $\varphi_t(z)=h^{-1}(e^{-\mu t}h(z))$ is a one-parameter semigroup of the self-maps with the Denjoy-Wolff point 0. Let $\G$ be the generator of the corresponding composition semigroup on $A^p_\om$. 
    Since $\log\frac{h(z)}{z}\in\B_0$, by Corollary \ref{coroally}, we see that $R(\mu,\G)$ is compact on $A^p_\om$ and hence its spectrum only consists of point spectrum. Therefore, these facts together with Theorem \ref{theorem1} and \eqref{2x} yield
    \begin{equation}\label{xf}
    \sigma_{A^p_\om}(R(\mu,\Gamma))\setminus\{0\}=\left\{\frac{1}{\mu-kG'(0)}:~h^k\in A^p_\om\right\}=\left\{\frac{1}{(k+1)\mu}:~h^k\in A^p_\om\right\}. 
   \end{equation}
   In addition, \eqref{1x} yields $R_h=\mu R(\mu,\G)$ and hence $\sigma_{A^p_\om(R_h)}=\mu \sigma_{A^p_\om}(R(\mu,\Gamma))$ and we have $0\in\sigma_{A^p_\om(R_h)}$ since $R_h(f)(0)=0$ for any $f\in\H(\D)$, which combined with \eqref{xf} concludes the result.

   (ii) Since we focus on the Hilbert space $A^2_\om$, the spectral mapping theorem holds automatically. Then the combined application of Theorem \ref{theorem1}, \eqref{1x} and \eqref{2x} completes the proof. 
\end{proof}

\vskip4mm
We finish the section and the paper by presenting several examples, while other constructions certainly can be provided. 

\begin{example}
    Let $h(z)=z$. Then $h$ is a $c$-spiral-like function for any $c$ with $\Re c>0$. By Proposition \ref{pro}, we get
    $$
\sigma_{A^p_\om}(R_z)=\left\{\frac{1}{k+1}, k=0,1,2,\cdots\right\}\cup\{0\}.
    $$
\end{example}

\begin{example}
    Let $h(z)=\log\frac{1}{1-z}$. Then $h$ is a $1$-spiral-like function. It induces the one-parameter semigroup $\at$ with $\varphi_t(z)=1-(1-z)e^{-t}$. By Proposition \ref{pro}, we get
    $$
\sigma_{A^p_\om}(R_z)=\left\{\frac{1}{k+1}, \left(\log\frac{1}{1-z}\right)^k\in A^p_\om, k=0,1,2,\cdots\right\}\cup\{0\}.
    $$
\end{example}

\begin{example}
    Let $h(z)=\log\frac{1+z}{1-z}$. Then $h$ is a $1$-spiral-like function. It induces the one-parameter semigroup $\at$ with $\varphi_t(z)=\frac{(1+z)^{e^{-t}}-(1-z)^{e^{-t}}}{(1+z)^{e^{-t}}+(1-z)^{e^{-t}}}$. We have
    $$
\sigma_{A^p_\om}(R_z)=\left\{\frac{1}{k+1}, \left(\log\frac{1+z}{1-z}\right)^k\in A^p_\om, k=0,1,2,\cdots\right\}\cup\{0\}.
    $$
\end{example}

\vskip1cm

{\bf{Acknowledgements}}~~~The authors thank the anonymous referee for careful reading of the manuscript and for his/her insightful comments and constructive suggestions which improved the exposition of the paper and certainly helped to make it more readable. The authors would like to express their gratitude to Professor A. G. Siskakis for his valuable suggestion on Theorem 1, which inspires us to present Proposition \ref{proo} and Corollary \ref{theorem+}.

\end{document}